\newcommand*{\ev}{\mathrm{ev}}
\newcommand*{\Sint}{\int\limits_{\mathclap{S^{d-1}}}\!}
\newcommand{\inserttitle}{Fixed-strength spherical designs}
\newif\ifsitem
\newcommand{\setupstar}{%
  \global\sitemfalse
  \let\origmakelabel\makelabel
  \def\sitem{\global\sitemtrue\item}
  \def\makelabel##1{%
    \origmakelabel{\ifsitem\llap{\raisebox{0.17ex}{$\mathbf{\ast}$}\:}\fi##1}%
    \global\sitemfalse}%
} 
    \newlength{\circlabelwidth}
    \setlist{nosep}
    \setlist[enumerate]{label=\textup{\arabic*.}}
    \newlist{subprob}{enumerate}{2}
        \setlist[subprob,1]{label={(\roman*)}}
        \setlist[subprob,2]{label={(\arabic*)}}
    \setlist[itemize]{labelindent=10pt,labelwidth=\circlabelwidth,leftmargin=!,label=$\circ$}
    \newlist{problems}{enumerate}{3}
        \setlist[problems,1]{before=\setupstar,label=\textup{\arabic*.}, itemsep=2pt, topsep=8pt,ref=\textup{\arabic*}}
        \setlist[problems,2]{before=\setupstar,label=(\alph*),parsep=0pt}
        \setlist[problems,3]{before=\setupstar,label=(\roman*),parsep=0pt}
    \renewcommand\@makefntext[1]{\leftskip=0em\hskip-0em\@makefnmark\,#1}
    \parbox{\textwidth}{
        \textbf{\textsc{Method}}
        \begin{mdframed}[innerleftmargin=4pt,innerrightmargin=4pt,skipabove=3pt,skipbelow=0pt]
            \BODY
        \end{mdframed}
    }
\theoremstyle{itcaps}
\newtheorem{theorem}{Theorem}[section]
\newtheorem{corollary}[theorem]{Corollary}
\newtheorem{proposition}[theorem]{Proposition}
\newtheorem{lemma}[theorem]{Lemma}
\newtheorem*{theorem*}{Theorem}
\newtheorem*{corollary*}{Corollary}
\newtheorem{mtheorem}{Theorem} 
    \crefname{mtheorem}{theorem}{theorems}
    \crefname{mtheorem}{Theorem}{Theorems}
\newtheorem{mcorollary}[mtheorem]{Corollary} 
    \crefname{mcorollary}{corollary}{corollaries}
    \Crefname{mcorollary}{Corollary}{Corollaries}
\theoremstyle{solved}
\theoremstyle{caps}
\newtheorem{definition}[theorem]{Definition}
\newtheorem{exampleprimitive}[theorem]{Example}
    \crefname{exercise}{Exercise}{Exercises}
\newtheorem{question}{Question}
\newtheorem{problem}[question]{Problem}
\theoremstyle{remark}
\numberwithin{equation}{section}
\newcommand*{\newword}[2][]{\emph{#2}\index{%
    \ifx&#1&%
       #2%
    \else%
       #1%
    \fi}%
} 
\newcommand*{\oldword}[2][]{#2\index{%
    \ifx&#1&%
       #2%
    \else%
       #1%
    \fi}%
} 
\newcommand*{\F}{\mathbb{F}}
\newcommand*{\E}{\mathbb{E}}
\newcommand{\tensor}{\otimes}
\DeclareMathOperator{\Tr}{Tr}
\DeclareMathOperator{\rank}{rank}
\let\phi\varphi
\let\epsilon\varepsilon
\let\oldchi\chi
\renewcommand{\chi}{\raisebox{1pt}{$\oldchi$}}
\title{\inserttitle}
\date{}
\begin{document}

\setlength{\abovedisplayskip}{6pt plus 2pt minus 4pt}
\setlength{\abovedisplayshortskip}{1pt plus 3pt}
\setlength{\belowdisplayskip}{6pt plus 2pt minus 4pt}
\setlength{\belowdisplayshortskip}{6pt plus 2pt minus 2pt}

\setlength{\parindent}{10pt}

\newcommand*{\red}[1]{\textcolor{red}{#1}}

\thispagestyle{plain}

\makeatletter

\null
\vspace{-1.5em}

\noindent
    \hfill%
    \vbox{%
        \hsize0.9\textwidth
        \linewidth\hsize
        {
          \hrule height 2\p@
          \vskip 0.25in
          \vskip -\parskip%
        }
        \centering
        {\LARGE\sc \inserttitle\par}
        {
          \vskip 0.26in
          \vskip -\parskip
          \hrule height 2\p@
          \vskip 0.20in%
        }
    }%
    \hfill\null%
\makeatother

{\centering
    \large\scshape Travis Dillon\par
    \vspace{0.25in}
}

\begin{abstract}
    \setstretch{0.95}
    \noindent
    A \emph{spherical $t$-design} is a finite subset $X$ of the unit sphere such that every polynomial of degree at most $t$ has the same average over $X$ as it does over the entire sphere. Determining the minimum possible size of spherical designs, especially in a fixed dimension as $t \to \infty$, has been an important research topic for several decades. This paper presents results on the complementary asymptotic regime, where $t$ is fixed and the dimension tends to infinity. The main results in this paper are (1) a construction of smaller spherical designs via an explicit connection to Gaussian designs and (2) the exact order of magnitude of minimal-size signed $t$-designs, which is significantly smaller than predicted by a typical degrees-of-freedom heuristic. We also establish a method to ``project'' spherical designs between dimensions, prove a variety of results on approximate designs, and construct new $t$-wise independent subsets of $\{1,2,\dots,q\}^d$ which may be of independent interest. To achieve these results, we combine techniques from algebra, geometry, probability, representation theory, and optimization.
\end{abstract}
\vspace{0.15in}

\section{Introduction}

One significant focus in discrete geometry is the study of structured and optimal point arrangements. Finding point sets that minimize energy, form efficient packings or coverings, maximize the number of unit distances, or avoid convex sets, for example, each comprise a significant and long-standing research program in the area \cite{research-problems-geometry}. Many other famous problems in discrete geometry are point arrangement problems in disguise: The famous equiangular lines problem, for example, corresponds to finding a regular simplex with many vertices in real projective space; the sphere kissing problem corresponds to packing points on a sphere. Spherical designs, the focus of this paper, are point sets that are uniformly distributed according to polynomial test functions.

\begin{definition}\label{def:spherical-design}
    Let $\mu$ denote the Lebesgue measure on the unit sphere $S^{d-1}$, normalized so that $\mu(S^{d-1}) = 1$. A set $X \subseteq S^{d-1}$ is called a \emph{spherical $t$-design} (or \emph{unweighted spherical $t$-design}) if 
    \begin{equation}\label{eq:unweighted-design-condition}
        \frac{1}{|X|} \sum_{x \in X} f(x)
        = \int\limits_{\mathclap{S^{d-1}}} f\, d\mu
    \end{equation}
    for every polynomial $f$ of total degree at most $t$. A \emph{weighted spherical $t$-design} is the set $X$ together with a weight function $w\colon X \to \R_{>0}$ such that
    \begin{equation}\label{eq:weighted-design-condition}
        \sum_{x \in X} w(x)\, f(x)
        = \int\limits_{\mathclap{S^{d-1}}} f\, d\mu,
    \end{equation}
    again for every polynomial $f$ of total degree at most $t$. If $w$ may take negative values, then $(X,w)$ is called a \emph{signed} design. The parameter $t$ is called the \emph{strength} of the design.
\end{definition}

Like many fundamental topics in discrete geometry, spherical designs have strong connections to a broad range of mathematics: numerical analysis \cite{efficient-designs}, optimization \cite{delsarte-spherical-codes-designs}, number theory and geometry \cite{spherical-designs-survey}, geometric and algebraic combinatorics \cite{spherical-designs-alg-survey}, and, of course, other fundamental problems in discrete geometry \cite{universally-optimal-sphere}. Moreover, from the perspective of association schemes, spherical designs are a continuous analogue of combinatorial designs \cite{design-theory-alg-comb}.

Naturally, it is easier to find weighted designs than unweighted designs. Indeed, it's not apparent that unweighted spherical designs of all strengths even exist! Certain well-known symmetric point sets are designs, but only for small strengths: the vertices of a regular icosohedron, for example, form a 5-design on $S^2$, while the vertices of a cross-polytope form a 3-design in any dimension. In a remarkable 1984 paper, Seymour and Zaslavsky proved the existence of spherical designs of all strengths:

\begin{theorem}[Seymour, Zaslavsky \cite{Seymour-Zaslavsky}]\label{thm:seymour-zaslavsky}
    For all positive integers $d$ and $t$, there is a number $N(d,t)$ such that for all $n \geq N(d,t)$, there is an unweighted spherical $t$-design in $\R^d$ with exactly $n$ points.
\end{theorem}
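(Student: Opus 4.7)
The plan is to reduce the theorem to an abstract averaging lemma: if $K$ is a path-connected compact topological space, $F\colon K\to \R^m$ is continuous, and $p$ lies in the relative interior of $\operatorname{conv} F(K)$, then for every sufficiently large $n$ there exist $x_1,\dots,x_n\in K$ with $\tfrac{1}{n}\sum_{i=1}^n F(x_i)=p$. Applying this lemma with $K=S^{d-1}$, $F(x)=\ev_x$ taking values in the dual of the finite-dimensional space $V_t$ of polynomials of degree $\le t$ (so $m=\dim V_t$), and $p$ equal to the integration functional $f\mapsto\int f\,d\mu$, yields the theorem immediately: the design condition \eqref{eq:unweighted-design-condition} is exactly $\tfrac{1}{n}\sum_i F(x_i)=p$. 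The hypothesis $p\in\operatorname{relint}\operatorname{conv} F(K)$ holds because $p=\int\ev_x\,d\mu$ is the centroid of the $O(d)$-orbit $\ev(S^{d-1})$, and any supporting hyperplane at $p$ would have to contain every $\ev_x$, hence the entire affine hull.

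To prove the lemma, I would combine Carathéodory's theorem with a perturbation. Carathéodory gives $p=\sum_{i=1}^{k}\lambda_i F(x_i)$ with $\lambda_i>0$, $\sum_i\lambda_i=1$, and $k\le m+1$, and by the relative-interior hypothesis we may arrange for the $F(x_i)$ to affinely span $\operatorname{conv} F(K)$. For each $n$ pick integers $m_i^{(n)}\ge 1$ with $\sum_i m_i^{(n)}=n$ and $|m_i^{(n)}/n-\lambda_i|=O(1/n)$, so the approximate average $q_n:=\tfrac{1}{n}\sum_i m_i^{(n)}F(x_i)$ satisfies $\|q_n-p\|=O(1/n)$. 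Fix continuous paths $\gamma_i\colon[0,1]\to K$ with $\gamma_i(0)=x_i$ chosen so that the tangent vectors $v_i=(F\circ\gamma_i)'(0)\in\R^m$ span $\R^m$, and consider the map
\[
\Phi_n(s_1,\dots,s_k)=q_n+\frac{1}{n}\sum_{i=1}^{k}\bigl(F(\gamma_i(s_i))-F(x_i)\bigr),
\]
which computes the new average obtained by replacing one of the $m_i^{(n)}$ copies of $x_i$ by $\gamma_i(s_i)$. The differential of $\Phi_n$ at the origin is the matrix with columns $v_i/n$, a surjection $\R^k\to\R^m$ with smallest nonzero singular value $\Theta(1/n)$. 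By the quantitative inverse function theorem, $\Phi_n$ maps a neighborhood of $0$ onto a ball of radius $\Omega(1/n)$ around $q_n$; for $n$ large this ball contains $p$.

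A final cleanup step is to force the $x_i$ to be distinct rather than merely forming a multiset. For $d\ge 2$, the set of $n$-tuples on $S^{d-1}$ whose evaluation average equals $p$ is a smooth submanifold of $(S^{d-1})^n$ of positive dimension once $n>m/(d-1)$, and the diagonal strata (where two coordinates coincide) have strictly smaller dimension, so an arbitrarily small perturbation within the averaging submanifold separates any repeated points. The main obstacle I anticipate is verifying the span condition on the tangent vectors $v_i$ in the averaging lemma: the paths $\gamma_i$ must be chosen so that $F$ varies in $m$ linearly independent directions near the support of the Carathéodory decomposition. For spherical designs this is essentially automatic, since $\ev$ is real-analytic and a local confinement of $\ev(S^{d-1})$ to a proper affine subspace would force a nonzero polynomial of degree $\le t$ to vanish on an open subset of the sphere, hence identically, a contradiction.
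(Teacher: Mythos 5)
A preliminary remark: the paper contains no proof of this statement --- it is quoted from Seymour and Zaslavsky, whose original argument is genuinely topological (an induction on the ambient dimension using connectedness and intermediate-value arguments) and never differentiates the evaluation map. So your proposal is necessarily its own route. The framing is right: reducing to the abstract averaging lemma, taking $F = \ev$ with values in the dual of $\mathcal P_t^\mu$ and $p = \int \ev_x\, d\mu$, and the supporting-hyperplane argument showing $p$ lies in the relative interior of $\conv F(S^{d-1})$ are all correct, as is the integer-rounding step giving $\lVert q_n - p\rVert = O(1/n)$ and the observation that the first and second derivatives of $\Phi_n$ both scale like $1/n$, so a quantitative inverse function theorem would indeed cover a ball of radius $\Omega(1/n)$.

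The gap is the surjectivity of $D\Phi_n$. Each path $\gamma_i$ contributes a single vector $v_i$ lying in the image of $d\,\ev_{x_i}$, which is a subspace of $\R^m$ of dimension at most $d-1$, and you have only $k \le m+1$ such subspaces to draw from. Your justification --- that $\ev(S^{d-1})$ is not locally confined to a proper affine subspace --- is a statement about the full local image (equivalently, about derivatives of all orders) and does not control the first-order data that $D\Phi_n$ actually uses. Concretely, no choice of paths makes $v_1,\dots,v_k$ span $\R^m$ whenever there is a nonzero polynomial $g$ of degree at most $t$ with $\int_{S^{d-1}} g\, d\mu = 0$ for which every base point $x_i$ is a critical point of $g|_{S^{d-1}}$; such polynomials can have very large critical sets (for instance $\Re(x_1+ix_2)^t$ is critical on all of $\{x_1 = x_2 = 0\}\cap S^{d-1}$), so nothing guarantees that the particular Carath\'eodory points you selected avoid every such degeneracy. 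You would need an extra argument --- e.g., perturbing the base points to generic position while keeping $p$ in the interior of their convex hull with controlled weights, or enlarging the support by a point set whose tangent images provably span --- before the inverse function theorem applies. (The final distinctness step is also only sketched: the averaging level set need not meet the diagonal strata transversally a priori, though the submersion you have just established at the constructed solution can be leveraged to fix this.) Modulo that one spanning condition, the architecture is sound, but as written the central step can fail.
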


With this established, mathematical attention turned  in earnest toward determining the size of the smallest spherical designs. The earliest bounds on the sizes of spherical designs were established by Delsarte, Goethals, and Seidel \cite{delsarte-spherical-codes-designs} by extending Delsarte's linear programming method for association schemes and coding theory \cite{delsarte-thesis}.

\begin{theorem}[Delsarte, Goethals, Seidel \cite{delsarte-spherical-codes-designs}]\label{thm:delsarte-lower-bound}
    The number of points in a spherical $t$-design in $\R^d$ is at least
    \[
        \begin{cases}
            \displaystyle \binom{d+ \lfloor t/2 \rfloor - 1}{d-1} + \binom{d + \lfloor t/2 \rfloor - 2}{d-1} & \text{if $t$ is even.}\\[3ex]
            \displaystyle 2\binom{d+\lfloor t/2\rfloor - 1}{d-1} & \text{if $t$ is odd.}
        \end{cases}
    \]
\end{theorem}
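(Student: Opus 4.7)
My plan is the Gram-matrix (moment-matrix) method. Let $k = \lfloor t/2 \rfloor$ and write $\mathcal{P}_k$ for the space of polynomials of degree at most $k$ restricted to $S^{d-1}$; let $\Phi \colon \mathcal{P}_k \to \mathbb{R}^X$ be the evaluation map $f \mapsto (f(x))_{x \in X}$. For any $f, g \in \mathcal{P}_k$ the product $fg$ has total degree at most $2k \le t$, so \eqref{eq:unweighted-design-condition} gives
\[
    \bigl\langle \Phi(f),\Phi(g)\bigr\rangle_{\mathbb{R}^X} \;=\; \sum_{x\in X} f(x)g(x) \;=\; |X|\int_{S^{d-1}} fg\, d\mu.
\]
The right-hand side is $|X|$ times the $L^2(S^{d-1})$ inner product on $\mathcal{P}_k$, which is positive definite, so $\Phi$ is injective and $|X| \ge \dim \mathcal{P}_k$. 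To finish the even case $t = 2k$ I would use the harmonic decomposition $\mathcal{P}_k = \bigoplus_{j=0}^k \mathrm{Harm}_j$ with $\dim \mathrm{Harm}_j = \binom{d+j-1}{d-1} - \binom{d+j-3}{d-1}$; the sum telescopes to $\binom{d+k-1}{d-1} + \binom{d+k-2}{d-1}$.

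For odd $t = 2k+1$ one exploits the additional vanishing $\int_{S^{d-1}} p\, d\mu = 0$ for any odd-parity polynomial $p$. Decompose $\mathcal{P}_k = \mathcal{P}_k^+ \oplus \mathcal{P}_k^-$ into even- and odd-parity subspaces; a direct telescoping shows that whichever parity subspace has parity matching $k \bmod 2$ has dimension exactly $\binom{d+k-1}{d-1}$. Call this larger subspace $\mathcal{A}$. After rotating $X$ generically so that $x_1 \ne 0$ for every $x \in X$ (possible since $X$ is finite), I would enlarge the test space to $\mathcal{A} \oplus x_1\mathcal{A} \subseteq \mathcal{P}_{k+1}$; the sum is direct because $\mathcal{A}$ and $x_1\mathcal{A}$ have opposite parities, and multiplication by $x_1$ is injective on $\mathcal{P}_k|_{S^{d-1}}$, so $\dim(\mathcal{A} \oplus x_1\mathcal{A}) = 2\binom{d+k-1}{d-1}$. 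Three facts then suffice: \textbf{(i)} $\Phi|_\mathcal{A}$ is injective by the even-case argument; \textbf{(ii)} $\Phi|_{x_1\mathcal{A}}$ is injective, since $x_1 a \equiv 0$ on $X$ with all $x_1(x) \neq 0$ forces $a \equiv 0$ on $X$, hence $a = 0$ by (i); and \textbf{(iii)} $\Phi(\mathcal{A}) \perp \Phi(x_1\mathcal{A})$ in $\mathbb{R}^X$, because for $a_1, a_2 \in \mathcal{A}$ the polynomial $x_1 a_1 a_2$ has odd parity and degree $\le 2k+1$, so $\sum_{x\in X} x_1(x)\,a_1(x)\,a_2(x) = |X|\int_{S^{d-1}} x_1 a_1 a_2\, d\mu = 0$. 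These three facts combine to show $\Phi$ is injective on $\mathcal{A} \oplus x_1\mathcal{A}$, giving $|X| \ge 2\binom{d+k-1}{d-1}$.

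The main obstacle is calibrating the odd case. Naively enlarging to $\mathcal{P}_{k+1}$ is not viable, because products of two degree-$(k+1)$ polynomials can have total degree $2k+2 > t$, so the design identity is unavailable on the corresponding Gram block. The particular choice $\mathcal{A} \oplus x_1\mathcal{A}$ is engineered so that the only Gram block not computable by the design condition is the self-pairing on $x_1\mathcal{A}$ — and for that block we only need nonnegativity (equivalently, injectivity of $\Phi|_{x_1\mathcal{A}}$), not an exact value. The parity-based orthogonality in (iii), which is exactly where the odd-$t$ hypothesis enters, is what forces the off-diagonal block to vanish and closes the argument.
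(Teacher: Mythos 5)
Your argument is correct, and it actually proves more than the paper does. For the even case your Gram-matrix argument is essentially identical to the paper's Proposition~\ref{thm:weighted-design-lower-bound}: the evaluation map $\mathcal P_k^\mu \to \R^X$ is injective because $\sum_{x\in X}(f-g)(x)^2 = |X|\int (f-g)^2\,d\mu$ for $\deg(fg)\le 2k\le t$, and the dimension count $\dim\mathcal P_k^\mu = \binom{d+k-1}{d-1}+\binom{d+k-2}{d-1}$ is carried out via the same harmonic decomposition in Section~\ref{sec:gegenbauer-appendix}. The difference is in the odd case: the paper does not prove the bound $2\binom{d+\lfloor t/2\rfloor-1}{d-1}$ at all --- it explicitly re-proves only the even-strength statement and attributes the full theorem to Delsarte, Goethals, and Seidel. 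Your parity-based extension (restricting to the parity subspace $\mathcal A$ of dimension $\binom{d+k-1}{d-1}$, adjoining $x_1\mathcal A$ after a generic rotation, and using that the cross Gram block vanishes because $x_1a_1a_2$ is odd of degree $\le 2k+1=t$ while the self-pairing on $x_1\mathcal A$ needs only injectivity, not an exact value) is a genuine addition, and each step checks out: the sum $\mathcal A\oplus x_1\mathcal A$ is direct by parity, multiplication by $x_1$ is injective on functions on the sphere, and the orthogonality plus the two injectivity facts combine to give injectivity on the full $2\binom{d+k-1}{d-1}$-dimensional space. What your route buys is a self-contained linear-algebraic proof of both parities of the theorem; what the paper's shorter route buys is a statement (Proposition~\ref{thm:weighted-design-lower-bound}) that applies verbatim to signed designs and to arbitrary measures $\nu$, which is what the rest of the paper actually uses. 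Your odd-case argument as written does rely on all evaluation weights being positive only through the injectivity steps (i) and (ii), which use $\|\Phi(a)\|^2=|X|\int a^2\,d\mu$; for signed designs one would phrase these as in the paper's proof (pointwise vanishing on $X$ forces $\int a^2\,d\nu=0$), after which your odd-case extension goes through for signed designs as well.
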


This result highlights the fundamental role that parity plays in designs. Given $\alpha = (\alpha_1,\alpha_2,\dots,\alpha_d) \in \N_0^d$, we let $x^\alpha = x_1^{\alpha_1}x_2^{\alpha_2}\cdots x_d^{\alpha_d}$ and $|\alpha| = \alpha_1 + \cdots + \alpha_d$. If $|\alpha|$ is odd, then
\[
    \int\limits_{\mathclap{S^{d-1}}} x^\alpha\, d\mu
    = \int\limits_{\mathclap{S^{d-1}}} (-x)^{\alpha}\, d\mu
    = -\int\limits_{\mathclap{S^{d-1}}} x^{\alpha}\, d\mu,
\]
so $\int_{S^{d-1}} x^\alpha\,d\mu = 0$. For the same reason, $\sum_{y \in Y} y^{\alpha} = 0$ for any antipodally symmetric set. The upshot is that, if $X$ is a spherical $2t$-design, then $X \sqcup (-X)$ is a spherical $(2t+1)$-design. For this reason, we will sometimes state results only for even designs.

Designs whose size exactly meets the lower bound in \Cref{thm:delsarte-lower-bound} are called \emph{tight designs}. These point sets often have further structure and symmetry beyond the design condition and only exist for a few values of $t$; see \cite{spherical-designs-alg-survey} for further information and references.

As for constructions of spherical designs, a linear algebraic argument shows proves the existence of weighted spherical designs with $O_d(t^{d-1})$ points, which matches the lower bound in \Cref{thm:delsarte-lower-bound}. (See \Cref{thm:kane-spherical-upper-bound}.) The question is then to find small \emph{unweighted} designs, which is a much harder problem. Hardin and Sloane found the exact size of the smallest unweighted designs on $S^2$ for certain specific values of $t$ \cite{hardin-sloane}. The broader research question is to determine the order of magnitude of minimum-size spherical designs, for a fixed dimension $d$, as the strength $t\to \infty$.

In the early 1990s, several mathematicians rapidly reduced the upper bound: Wagner \cite{sph-designs-wagner} constructed a spherical design with $O_d(t^{C d^4})$ points, which Bajnok \cite{sph-designs-bajnok} improved the following year to $O_d(t^{C d^3})$, followed a year later by Korevaar and Meyers's bound \cite{sph-designs-korevaar} of $O_d(t^{(d^2 + d)/2})$. In 2013, Bondarenko, Radchenko and Viazovska used topological methods to show the existence of designs whose size matches the lower bound in \Cref{thm:delsarte-lower-bound}.

\begin{theorem}[Bondarenko, Radchenko, Viazovska \cite{optimal-designs-fix-dim}]\label{thm:viazovska}
    There are numbers $N(d,t) = O_d(t^{d-1})$ such that for any $n \geq N(d,t)$, there is an unweighted spherical $t$-design in $\R^d$ with $n$ points.
\end{theorem}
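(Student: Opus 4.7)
The plan is to combine an equal-area partition of $S^{d-1}$ with a topological degree argument, showing that among configurations with exactly one point chosen from each cell, some configuration is an exact unweighted spherical $t$-design.

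First, I would invoke a classical \emph{equal-measure partition} of $S^{d-1}$ into $N$ regions $\Omega_1,\ldots,\Omega_N$ of measure $1/N$ and diameter $\delta = O_d(N^{-1/(d-1)})$. Let $V$ denote the space of polynomials of total degree at most $t$ restricted to $S^{d-1}$ and having spherical mean zero; the Fischer/harmonic decomposition gives $M := \dim V = \Theta_d(t^{d-1})$. For each cell pick a ``center'' $c_i \in \Omega_i$ (say the spherical centroid, so that the first moment of $\Omega_i$ about $c_i$ vanishes). Now define the discrepancy map
\[
    \Phi \colon \Omega_1 \times \cdots \times \Omega_N \to V^{*}, \qquad
    \Phi(x_1,\ldots,x_N)(f) = \frac{1}{N}\sum_{i=1}^{N} f(x_i).
\]
A zero of $\Phi$ is exactly an unweighted spherical $t$-design consisting of one point per cell.

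Next, I would set $N = C_d\, t^{d-1}$ for a sufficiently large constant $C_d$ and aim to prove that $0 \in \Phi(\Omega_1 \times \cdots \times \Omega_N)$. At the center configuration $c = (c_1,\ldots,c_N)$, Taylor expansion of each $f \in V$ around $c_i$ (using that the first moments vanish) gives
\[
    \Phi(c)(f)
    = \frac{1}{N}\sum_{i=1}^{N} f(c_i) - \int\limits_{\mathclap{S^{d-1}}} f\, d\mu
    = O\!\bigl(\delta^{2}\, \|\nabla^{2} f\|_\infty\bigr),
\]
so $\|\Phi(c)\|_{V^{*}}$ is controlled by $\delta^{2}$ times a polynomial factor in $t$. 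On the other hand, moving each $x_i$ within $\Omega_i$ allows $\Phi$ to vary at rate linear in $\delta$; a rank computation should show that, for $N$ large enough, the image of a small neighborhood of $c$ under $\Phi$ contains a ball around $\Phi(c)$ of radius exceeding $\|\Phi(c)\|$. Brouwer's theorem applied to a suitable retraction (or equivalently a topological-degree computation) then yields a configuration at which $\Phi = 0$.

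The main obstacle will be this last step: verifying that the differential $d\Phi|_c$ is quantitatively well conditioned enough that its local image absorbs the $O(\delta^{2})$ centering error. One must show that the Jacobian, of shape $M \times N(d-1)$, has enough small singular values bounded below by a quantity of order $\delta$ (after the right normalization). This is where area regularity enters crucially, via a discrete orthogonality estimate: the evaluation functionals $\ev_{c_i}$ on $V$ act as an approximate $L^{2}$-quadrature, so independent perturbations of the $c_i$ can adjust the coefficients of $\Phi$ in a basis of $V$ essentially independently. Balancing the $\delta$-gain from the derivative against the $\delta^{2}$-loss from centering is what forces $N \asymp t^{d-1}$ and produces a match to the Delsarte--Goethals--Seidel lower bound in \Cref{thm:delsarte-lower-bound} up to a constant depending on $d$.
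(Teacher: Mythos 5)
First, a framing point: the paper does not prove \Cref{thm:viazovska} at all --- it is quoted as an external result of Bondarenko, Radchenko, and Viazovska --- so your proposal can only be measured against their published argument. Your outline assembles the right ingredients (an equal-measure partition into $N \asymp t^{d-1}$ cells of diameter $\delta \asymp t^{-1}$, the dimension count $\dim V = \Theta_d(t^{d-1})$, and a topological existence argument), but the step you yourself flag as ``the main obstacle'' is the entire theorem, and the route you propose for it does not close. Two concrete problems. (1) The centering error is not small: a minor issue is that no point of $\Omega_i$ on the sphere makes the first moment vanish exactly (the true centroid lies off the sphere), but the real issue is that the Bernstein-type inequality for spherical polynomials gives $\|\nabla^2 f\|_\infty = O(t^2\|f\|)$ and this order is attained, so with $\delta \asymp t^{-1}$ your bound $O(\delta^2\|\nabla^2 f\|_\infty)$ is a constant multiple of $\|f\|$ (small only through the choice of $C_d$), not a quantity tending to zero. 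The whole argument is therefore a contest between two quantities of the same order $\Theta(\|f\|)$, and everything hinges on constants you have not tracked. (2) To absorb that error you would need the image under $d\Phi|_c$ of the \emph{feasible} perturbations --- which must stay inside the cells $\Omega_i$, each of diameter only $\delta$ and with $c_i$ possibly adjacent to $\partial\Omega_i$ --- to contain a ball in $V^*$ of radius at least $\|\Phi(c)\|$. That amounts to a uniform lower bound on the small singular values of an $M \times N(d-1)$ Jacobian, i.e.\ a Marcinkiewicz--Zygmund-type inequality for gradients at the centers holding simultaneously over all of $V$, together with a surjectivity argument across the dimension mismatch $N(d-1) \gg M$. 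No such quantitative conditioning is established in your sketch or known to hold for an arbitrary equal-area partition; this is precisely the estimate that kept the problem open for decades.

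The actual proof avoids inverting any differential. Bondarenko, Radchenko, and Viazovska parametrize configurations by the polynomial itself: starting from one point per cell, they move each point along a vector field built from $p$ for a fixed time, obtaining a continuous map $p \mapsto \Phi(p) \in V$ whose zero is a design, and they apply the Brouwer degree theorem on the finite-dimensional space $V$. For that it suffices to verify the \emph{positivity} $\langle \Phi(p), p\rangle = \sum_i p\big(x_i(p)\big) > 0$ on a sphere $\|p\| = r$, which follows because the flow increases $p$ at each point by an amount proportional to $\|p\|$ (here one uses that each cell of the area-regular partition contains a cap of radius comparable to $\delta$), and this gain beats the $O(\|p\|)$ partition error once the constant $C_d$ in $N = C_d t^{d-1}$ is large. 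Replacing your Jacobian-conditioning step with this gradient-flow and degree argument is the essential missing idea.
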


Later works extended this result to designs on manifolds \cite{designs-compact-manifolds,designs-compact-alg-manifolds} or addressed the same problem in a general topological setting \cite{kane-small-designs}.

Interestingly, there appears to be little published research on the opposing regime, which holds the strength fixed as the dimension tends to infinity. Some work explores properties of 3- or 5-designs (for example, \cite{boyvalenkov,boyvalenkov2,3-designs,3-designs-abelian}), and one paper focuses on odd-strength spherical designs with an odd number of points \cite{boyvalenkov99}, improving the lower bound in this case from roughly $2 d^{\lfloor t/2\rfloor}/e!$ (in \Cref{thm:delsarte-lower-bound}) to $(1+2^{1/t}) d^{\lfloor t/2\rfloor}/e!$. The goal of this paper is to investigate the fixed-strength regime in more generality.

The first surprising aspect of this problem is the difficulty of determining the optimal size even for \emph{weighted} designs. When the dimension is fixed, the linear-algebraic construction in \Cref{thm:kane-spherical-upper-bound} produces a design whose size is within a constant of the \Cref{thm:delsarte-lower-bound}'s lower bound. This changes dramatically when the strength is fixed: \Cref{thm:delsarte-lower-bound} claims that every $t$-design has $\Omega_t(d^{\lfloor t/2\rfloor})$ points, while the linear-algebraic construction produces a design with $O_t(d^t)$ points. Therefore, surprisingly, in the fixed-strength regime, determining the minimal size of a design is an interesting problem even for weighted or signed designs.

The first few results in this paper rely on establishing a connection between spherical designs and designs over Gaussian space. Any probability measure on $\R^d$ can replace $\mu$ in \Cref{def:spherical-design}, and each measure gives a different design problem. Despite the close relationship between the Gaussian and spherical measures and the existence of previous research on Gaussian designs \cite{tight-gaussian-4-designs,non-exist-tight-gaussian-6-designs}, there appears to be no published result that connects these design problems. This paper provides an explicit connection, showing how to transform a Gaussian design into a spherical design and vice versa.

One nice application of the connection between Gaussian and spherical designs is the ability to ``project'' spherical designs to lower dimensions. Many point arrangement problems are monotone in the dimension because of a natural embedding into higher dimensions. For example, a set of equiangular lines in $k$ dimensions is also equiangular in $n > k$ dimensions, and a kissing configuration of points on $S^k$ (in which the distance between each pair of points is exactly $1/2$) is also a kissing configuration on $S^n$ for $n > k$. Similarly, a spherical $t$-design on $S^{d-1}$ is already a $(t-1)$-design, as well.

However, a spherical design does not easily embed in a different dimension: a spherical design on $S^k$ will not correctly average the polynomial $x_{k+1}^2$ over $S^n$ for any $n > k$. As a result, it's not clear whether the minimal number of points in a $t$-design on $S^n$ is an increasing function of $n$. However, Gaussian designs on $\R^n$ naturally project onto Gaussian designs on $\R^k$. By transferring a spherical design to a Gaussian design, projecting, and transferring back, we can convert $t$-designs on $S^n$ into $t$-designs on $S^k$ for $k < n$, showing that the minimum size of a $t$-design on $S^d$ is ``almost monotone'' in $d$. (See \Cref{thm:spherical-design-projection} for the exact statement.)

The main reason for developing this connection, however, is to construct small spherical designs by constructing small Gaussian designs. The first main result of this paper does just that, constructing an unweighted design that establishes an upper bound which is not only explicit but even smaller than the upper bound of $O_t(d^t)$ for \emph{weighted} designs.

\begin{mtheorem}\label{thm:unweighted-gaussian-design}
    There is an unweighted Gaussian $t$-design in $\R^d$ with $O_t(d^{t-1})$ points.
\end{mtheorem}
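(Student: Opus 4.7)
My plan is to split the problem into two ingredients: an unweighted one-dimensional Gaussian $t$-design of constant size, and a $t$-wise independent uniform subset of a finite alphabet that lifts the 1D design to $\R^d$.

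First, I would exploit the product structure of the Gaussian measure. For any monomial $x^\alpha$ of total degree at most $t$, the support of $\alpha$ has size at most $t$, and the Gaussian integral factors as $\int_{\R^d} x^\alpha\, d\gamma = \prod_{i : \alpha_i > 0} M(\alpha_i)$, where $M(k) = \int y^k\, d\gamma_1$ is the $k$-th one-dimensional Gaussian moment. Thus, if $X_0 \subseteq \R$ is an unweighted 1D Gaussian $t$-design with $q$ points and $S \subseteq X_0^d$ is such that for every $I \subseteq [d]$ with $|I| \le t$ the uniform distribution on $S$ projects to the uniform distribution on $X_0^I$---that is, $S$ represents a $t$-wise independent uniform distribution on $\{1,\ldots,q\}^d$ after identifying $X_0$ with $\{1,\ldots,q\}$---then for any $\alpha$ with $|\alpha| \le t$ and $I = \supp \alpha$,
\[
\frac{1}{|S|} \sum_{x \in S} x^\alpha = \E_{y \sim X_0^I}\!\Bigl[\prod_{i \in I} y_i^{\alpha_i}\Bigr] = \prod_{i \in I} \frac{1}{q} \sum_{y \in X_0} y^{\alpha_i} = \prod_{i \in I} M(\alpha_i),
\]
so $S$ is an unweighted Gaussian $t$-design in $\R^d$.

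With this reduction, I need two ingredients. The 1D design $X_0$ with $q = O_t(1)$ points exists by elementary moment-matching: an antipodally symmetric 1D configuration automatically zeros out every odd moment, reducing the problem to a small system of $\lceil t/2 \rceil$ polynomial equations which is solvable for $q$ large enough, and I can choose $q$ to be a convenient prime power. For the $t$-wise independent uniform subset of $\{1,\ldots,q\}^d$ of size $O_t(d^{t-1})$, a natural starting point is the Reed--Solomon evaluation code: fix a prime power $Q \ge d$, pick distinct $\alpha_1,\ldots,\alpha_d \in \mathbb{F}_Q$, and take the image of the map $(a_0,\ldots,a_{t-1}) \in \mathbb{F}_Q^t \mapsto (P_a(\alpha_1),\ldots,P_a(\alpha_d))$ with $P_a(X) = \sum_j a_j X^j$. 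The Vandermonde property makes any $t$ coordinates jointly uniform, so this is $t$-wise independent of size $Q^t = O(d^t)$. To shave a factor of $d$, I would refine this construction, either by an alphabet-reducing concatenation that folds $\mathbb{F}_Q$ down to $\{1,\ldots,q\}$ over a slightly larger number of positions, or by using a dual-BCH-style code over $\mathbb{F}_q$ directly, so that the support drops to $O_t(d^{t-1})$ while retaining uniform $t$-wise independence.

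The main obstacle is balancing three competing constraints on the $t$-wise independent set: uniform weights (required by the unweighted design condition), constant alphabet size $q = O_t(1)$ (to match the 1D Gaussian design $X_0$), and support size $O_t(d^{t-1})$. Classical algebraic constructions meet any two of these comfortably but not all three simultaneously. Bridging this gap---billed in the abstract as the construction of new $t$-wise independent subsets of $\{1,\ldots,q\}^d$---is where I expect the main technical work to lie.
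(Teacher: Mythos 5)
Your reduction is exactly the paper's: factor the Gaussian over coordinates, take an unweighted one-dimensional Gaussian $t$-design $A$ with $q=O_t(1)$ points ($q$ a prime power), and lift it via a $t$-wise independent subset of $A^d$; the computation showing that $t$-wise independence lets the average of any monomial of degree $\le t$ split into a product of one-dimensional moments is the same as in the paper. (Your sketch of the one-dimensional design is looser than necessary --- equal-weight moment matching is not just ``a solvable polynomial system,'' but it does follow from Seymour--Zaslavsky applied to the one-dimensional Gaussian, which is what the paper cites.) The genuine gap is the ingredient you yourself flag: the existence of a \emph{uniform} $t$-wise independent subset of $\{1,\dots,q\}^d$ with $q=O_t(1)$ and only $O_t(d^{t-1})$ elements. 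Neither of your proposed routes closes it. Reed--Solomon needs alphabet $Q\ge d$, and folding $\F_Q$ down to a constant alphabet by concatenation does not obviously preserve exact uniformity of all $t$-coordinate marginals at the required length. The dual-BCH route over $\F_q$ fails quantitatively in precisely the regime that matters here: extending Alon--Babai--Itai from $\F_2$ to $\F_q$ gives a $(qr+1)$-wise independent set of size about $d^{(q-1)r}\approx d^{t(1-1/q)}$, which beats $d^{t-1}$ only when $t>q$; in this application $q>t$ (there is no unweighted one-dimensional Gaussian $t$-design on $t$ points for the relevant $t$), so dual-BCH only yields $O_t(d^{t})$ points, no better than the weighted baseline.

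The paper closes the gap with a probabilistic, not algebraic, construction. The dual-code step you allude to is made precise as follows: if $S\subseteq\F_q^r$ has the property that every $t$ of its elements are linearly independent, then the multiset $\{\!\{(\langle x,y\rangle)_{y\in S}: x\in\F_q^r\}\!\}$ is a uniform $t$-wise independent subset of $\F_q^{S}$ with $q^r$ elements. The new content is a lower bound on how large $S$ can be: by including each vector of $\F_q^r$ independently with a tuned probability and deleting one vector from each linearly dependent $t$-subset (the alteration method), one obtains such an $S$ of size $\tfrac18 q^{r/(t-1)-1}$. Setting $d=|S|$ gives a $t$-wise independent subset of $\{1,\dots,q\}^d$ with $(8qd)^{t-1}=O_t(d^{t-1})$ elements, which is exactly what your reduction needs. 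Without this counting argument (or an equivalent bound on the largest subset of $\F_q^r$ with no $t$ linearly dependent vectors), your proof does not go through.
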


The transfer principle between Gaussian and spherical designs then establishes the existence of correspondingly small spherical designs.

\begin{mcorollary}\label{thm:smaller-sph-designs}
    There is a weighted spherical $t$-design on $S^{d-1}$ with $O_t(d^{t-1})$ points, and there is a multiset with at most $O_t(d^{t-1})$ distinct points that forms an unweighted spherical $t$-design on $S^{d-1}$. 
\end{mcorollary}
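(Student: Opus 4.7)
The plan is to apply Theorem A to produce a small unweighted Gaussian $t$-design and then transfer it to the sphere. The heart of the argument is a transfer principle that converts a Gaussian $t$-design into a weighted spherical $t$-design of comparable size.

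First, by Theorem A there is an unweighted Gaussian $t$-design $X\subseteq\R^d$ with $|X|=O_t(d^{t-1})$. Antipodally symmetrizing to $X'=X\sqcup(-X)$ (with uniform weights $1/|X'|$) preserves the Gaussian $t$-design property, since every odd-degree monomial has zero Gaussian integral. The symmetrized design has $|X'|\le 2|X|$ points and is antipodally symmetric.

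Next, I project $X'$ radially to the sphere. Set $s=2\lfloor t/2\rfloor$, write $\mu_s=\E[\|g\|^s]$ for the chi moment, and define weights on $Y=\{x/\|x\|:x\in X'\}$ by $w'(x/\|x\|)=\|x\|^s/(|X'|\,\mu_s)$. To check that $(Y,w')$ is a spherical $t$-design, it suffices to verify $\sum_y w'(y)h(y)=0$ for every harmonic polynomial $h$ of degree $j\in\{1,\dots,t\}$, together with $\sum_y w'(y)=1$. The normalization is immediate from the Gaussian design condition applied to the polynomial $\|x\|^s$. For the harmonic conditions, I split by parity. When $j$ is even, $s-j$ is a nonnegative even integer, so $\|x\|^{s-j}h(x)$ is a genuine polynomial of total degree $s\le t$; the Gaussian design condition directly gives
\[
    \sum_{x\in X'}\|x\|^s h(u_x)=\sum_{x\in X'}\|x\|^{s-j}h(x)=0.
\]
When $j$ is odd, $\|x\|^s$ is invariant under $x\mapsto -x$ while $h$ is antipodally odd, so the contributions from $x$ and $-x$ in $X'$ cancel pairwise. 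Dividing by $|X'|\mu_s$ then yields the desired spherical design identity. This produces a weighted spherical $t$-design on $S^{d-1}$ with at most $O_t(d^{t-1})$ distinct points and strictly positive weights. For the multiset version, since the weights are strictly positive, I plan to arrange Theorem A's construction so that the $\|x\|$ are (or can be perturbed to be) rational, at which point scaling $w'$ to a common denominator realizes the design as a multiset with $O_t(d^{t-1})$ distinct points; alternatively, a small rational approximation of $w'$ suffices, since the design conditions form a finite-dimensional linear subspace.

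The main obstacle is the choice of radial reweighting. A naive projection $w'(u_x)=w(x)$ fails because the harmonic $h(u_x)=h(x)/\|x\|^j$ acquires the non-polynomial factor $\|x\|^{-j}$, and a purely polynomial reweighting $R(\|x\|)$ can only address harmonic degrees of one parity at a time: for the opposite parity, $\|x\|^{s-j}$ is an odd power of $\|x\|$, which is not a polynomial on $\R^d$. The resolution is to handle the two parity classes by different mechanisms, using polynomial reweighting by $r^s$ for the parity matching $s$ and antipodal symmetrization for the opposite parity.
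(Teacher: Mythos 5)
Your weighted construction is essentially the paper's own route: Theorem~\ref{thm:unweighted-gaussian-design} followed by the Gaussian-to-spherical conversion of Proposition~\ref{thm:Gaussian-to-spherical}, with the radial reweighting by $\|x\|^{s}$ ($s=2\lfloor t/2\rfloor$) normalized by the $s$-th radial moment and an antipodal symmetrization to kill the odd harmonics. Symmetrizing at the Gaussian stage rather than the spherical stage, and testing against harmonics rather than monomials, are cosmetic differences; that half is correct (modulo noting that for $j$ even the Gaussian integral of $\|x\|^{s-j}h(x)$ vanishes because its spherical average does, via \eqref{eq:spherical-to-gaussian-integral}).

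The gap is in the multiset claim. Your plan is to make the ratios of the weights $w'(y)=\|x\|^{s}/(|X'|\mu_s)$ rational, either by arranging the norms $\|x\|$ to be rational or by ``a small rational approximation of $w'$.'' Neither works. The points of $X$ lie in $A^d$ where $A$ is a one-dimensional unweighted Gaussian $t$-design supplied by Proposition~\ref{thm:moment-approximation-unweighted}; its elements satisfy exact polynomial moment equations and are not at your disposal to rationalize, and perturbing them destroys the exact design property (the design conditions are equalities, not open conditions). The fallback is also unsound: the set of admissible weight vectors is an affine subspace $\{w: Mw=m\}$ whose defining data $M_{\alpha,i}=y_i^{\alpha}$ involves the (generally irrational) coordinates of the $y_i$, so this subspace need not contain any rational point, and a rational vector merely close to $w'$ gives only an approximate design. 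This is precisely the difficulty the paper flags: it obtains the multiset statement by citing Section~4 of Seymour--Zaslavsky \cite{Seymour-Zaslavsky}, who convert a weighted $N$-point design into an unweighted multiset design on at most $N$ distinct points by perturbing the \emph{points} (not just the weights) and solving the resulting nonlinear system with the Inverse Function Theorem. You need to invoke that result (or reprove it); a rationalization of the weights alone will not close this step.
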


Unfortunately, the conversion does not produce an unweighted spherical design. However, \Cref{thm:smaller-sph-designs} is still an improvement on the previous upper bound for weighted spherical $t$-designs. Moreover, not all is lost: In Section 4 of \cite{Seymour-Zaslavsky}, Seymour and Zaslavsky show how to convert a weighted $t$-design with $N$ points into a multiset with at most $N$ distinct points which forms an unweighted $t$-design. Because the weights of the points may be irrational, the process is not trivial; they apply the Inverse Function Theorem. Applying that same process to the weighted design in \Cref{thm:smaller-sph-designs} proves the second half of the statement.

One main ingredient in the proof of \Cref{thm:unweighted-gaussian-design} is a new bound on $t$-wise independent sets which is likely of independent interest. Roughly speaking, a subset $Y \subseteq \{1,2,\dots,q\}^d$ is $t$-wise independent if the multiset projection of $Y$ to any set of $t$ coordinates is uniform on $\{1,2,\dots,q\}^t$. (See \Cref{def:k-wise-independent} for a formal definition.) The case $q=2$ has received the most attention for its applications to derandomizing algorithms in computer science, but the problem remains interesting and has many applications when $q>2$, as well. Combinatorialists and statisticians have studied $t$-wise independent sets under the name \emph{orthogonal arrays} \cite{orthogonal-arrays-book,orthogonal-arrays-review,orthogonal-arrays-rao,orthogonal-designs-info-theory}, where there is a strong connection to mathematical coding theory and experimental design. Computer scientists use $t$-wise independent sets, sometimes called \emph{$t$-universal hash functions}, for designing efficient randomized algorithms and managing limited memory in algorithms \cite{linear-probing,hash-functions-set-equality,k-wise-ind-construction,testing-k-wise-ind}.

Since $t$-wise independent sets are so well-studied, it comes as no surprise that there are many constructions of these sets that are effective in different parameter ranges. The most common general constructions come from error-correcting codes, such as the binary BCH or Reed--Solomon codes. The application in this paper, however, is in the unusual regime where $q > t$, and we provide an alternate construction which has an advantage in this setting:

\begin{mtheorem}\label{thm:t-wise-ind}
    If $q$ is a prime power, then there is a $t$-wise independent subset of $\{1,2,\dots,q\}^d$ with at most $(8qd)^{t-1}$ elements.
\end{mtheorem}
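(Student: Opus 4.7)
The plan is to build $Y$ as the image of an $\F_q$-linear map. Take a $d \times k$ matrix $A$ over $\F_q$ whose every $t$ rows are $\F_q$-linearly independent, and set $Y = \{Ax : x \in \F_q^k\}$. Then $|Y| = q^k$, and for every $|S| = t$ the map $x \mapsto (Ax)_S$ is a surjective $\F_q$-linear map $\F_q^k \to \F_q^t$ (the $t \times k$ submatrix has rank $t$), so $\pi_S(Y) = \F_q^t$ with fibres of equal size; this gives $t$-wise independence as a multiset. The task reduces to constructing such an $A$ with $q^k \leq (8qd)^{t-1}$.

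I would combine two constructions. When $d \leq q+1$, use the Vandermonde/Reed--Solomon matrix $A_{i,j} = \alpha_i^{j-1}$ for distinct $\alpha_i \in \F_q$ with $k = t$: any $t$ rows form a nonsingular Vandermonde submatrix, so $|Y| = q^t$, which meets the bound under the implicit consistency condition $q \leq (8d)^{t-1}$ (needed because $|Y| \geq q^t$ for $d \geq t$). When $d > q+1$, I would build $A$ greedily: before picking $v_i \in \F_q^k$, the forbidden vectors are those lying in the $\F_q$-span of some $(t-1)$-subset of $\{v_1, \ldots, v_{i-1}\}$. There are at most $\binom{d-1}{t-1}$ such subsets and each span has at most $q^{t-1}$ vectors, so a valid choice exists whenever $q^k > q^{t-1} \binom{d-1}{t-1}$. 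Taking $k$ minimal and applying $\binom{d-1}{t-1} \leq d^{t-1}/(t-1)!$ yields $|Y| \leq q^t \, d^{t-1}/(t-1)!$.

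The main obstacle is securing the constant $8$ uniformly in the range $d > q$ with $q$ large, where the greedy bound $q^t d^{t-1}/(t-1)!$ exceeds $(8qd)^{t-1}$ unless $q \leq 8^{t-1}(t-1)!$. To cover this regime I would carry out the Vandermonde construction over an extension $\F_{q^m}$ with $m = \lceil \log_q d \rceil$, producing $d$ rows in $\F_{q^m}^t$ any $t$ of which are $\F_{q^m}$-linearly independent; unfolding them into $\F_q^{mt}$ through a fixed $\F_q$-basis preserves $\F_q$-linear independence of every $t$-subset, giving a valid $A$ with $k = mt$ and the a priori bound $|Y| = q^{mt} \leq (qd)^t$. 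The technical crux is then to save one further factor of $qd$ to land at $(8qd)^{t-1}$; I would do this by identifying a codimension-roughly-$m$ $\F_q$-subspace of $\F_q^{mt}$ that can be quotiented out without destroying the $t$-wise projection property, exploiting the Frobenius symmetry of the Vandermonde rows to ensure the quotient still sends each $t$-coordinate projection onto $\F_q^t$. The factor of $8$ then absorbs the remaining ceilings and the comparison $q^m \leq qd$.
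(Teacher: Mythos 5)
Your first paragraph is exactly the paper's reduction (\Cref{thm:t-wise-linear-to-set}): the $t$-wise independent set is the $\F_q$-linear code generated by a $d\times k$ matrix every $t$ of whose rows are linearly independent, so everything reduces to packing many such rows into $\F_q^k$. Where you diverge is in producing the rows. The paper (\Cref{thm:t-wise-linearly-indep-set}) uses probabilistic deletion --- include each vector of $\F_q^k$ independently with probability $\alpha$, delete a vector from each dependent $t$-subset, optimize $\alpha$ --- to get roughly $\frac{1}{8q}q^{k/(t-1)}$ rows. Your greedy argument is a clean deterministic substitute and is, if anything, slightly stronger: run it until it gets stuck after $d$ rows and you find $\binom{d}{t-1}q^{t-1}\ge q^k$, i.e.\ $q^k\le (qd)^{t-1}/(t-1)!$, comfortably inside $(8qd)^{t-1}$. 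The factor-of-$q$ worry in your second paragraph arises only from fixing $d$ first and rounding $k$ up to an integer; the paper's proof incurs exactly the same loss whenever $d$ is not of the special form $\frac18 q^{r/(t-1)-1}$, so your paragraphs 1--2 already prove the theorem at the same level of precision as the paper does. (Indeed the literal constant cannot hold for all parameters: for $d=t$ and $q>(8t)^{t-1}$, any $t$-wise independent subset of $\{1,\dots,q\}^t$ must be all of $\{1,\dots,q\}^t$, which has $q^t>(8qt)^{t-1}$ elements.)

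Given that, your third paragraph is solving a non-problem, and as written it is not a proof: after passing to $\F_{q^m}$ and unfolding, the step of ``quotienting out a codimension-roughly-$m$ subspace using Frobenius symmetry'' is unsubstantiated --- a generic quotient destroys surjectivity of the $t$-coordinate projections, and you give no mechanism for why a Frobenius-invariant choice would preserve it. I would delete that paragraph entirely, keep the greedy bound $q^k\le q^t\binom{d-1}{t-1}\le q^t d^{t-1}/(t-1)!$ (or restate the greedy in the ``fix $k$, maximize $d$'' direction to land on $(qd)^{t-1}/(t-1)!$), and note that this matches the paper's bound up to the same integrality slack. For the paper's application ($q$ a constant depending on $t$, $d\to\infty$), either form gives the needed $O_t(d^{t-1})$.
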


\Cref{sec:open} has a more in-depth comparison of \Cref{thm:t-wise-ind} to previous constructions.

\Cref{thm:unweighted-gaussian-design} and \Cref{thm:smaller-sph-designs} are improvements on the upper bound (especially \Cref{thm:unweighted-gaussian-design},since there is no \emph{a priori} upper bound for unweighted designs), but their size remains far from the lower bound of \Cref{thm:delsarte-lower-bound}. What size should we expect a minimal $t$-design to have? One way to form a prediction for the size of Gaussian designs, say, is to compare degrees of freedom with constraints. A configuration $X$ of $N$ points in $\R^d$ has $Nd$ degrees of freedom, and \eqref{eq:weighted-design-condition} represents $\binom{d+t}{t}$ constraints, one for each monomial of total degree at most $t$. Heuristically, we might expect a $t$-design to exist as long as the degrees of freedom outnumber the constraints: when $Nd > \binom{d+t}{t} = \Theta_t(d^t)$, or $N = \Omega_t(d^{t-1})$. And that is indeed the size of the designs in \Cref{thm:unweighted-gaussian-design} and \Cref{thm:smaller-sph-designs}.

That heuristic predicts the minimum size correctly when the dimension is fixed, but it turns out to be misleading for fixed-strength designs. There is nothing about the heuristic specific to weighted designs: The same reasoning holds for signed designs. However, it fails spectacularly to predict their size:

\begin{mtheorem}\label{thm:optimal-signed-designs}
    For every $t$, there are signed spherical and Gaussian $t$-designs with $O_t(d^{\lfloor t/2\rfloor})$ points.
\end{mtheorem}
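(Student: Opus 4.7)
The plan is to exploit the antipodal symmetry shared by the Gaussian measure on $\R^d$ and the uniform measure on $S^{d-1}$; throughout, let $\nu$ denote either one. Because $\nu$ is invariant under $x \mapsto -x$, every polynomial whose monomials all have odd total degree integrates to zero against $\nu$. So if I take any points $x_1, \dots, x_N$ and signed weights $w_1, \dots, w_N$ and form the antipodally symmetric configuration $\{\pm x_i\}$ with weight $w_i/2$ at each of $\pm x_i$, then every odd polynomial has weighted sum zero automatically, and the full $t$-design condition collapses to
\[
    \sum_{i=1}^{N} w_i\, p(x_i) = \int\limits_{\mathbb{R}^d \text{ or } S^{d-1}} p \, d\nu \qquad \text{for every } p \in V_t^{\ev},
\]
where $V_t^{\ev}$ is the span of monomials $x^\alpha$ of even degree $|\alpha| \leq t$.

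The subspace $V_t^{\ev}$ has a monomial basis indexed by $\alpha = 2\beta$ with $|\beta| \leq \lfloor t/2 \rfloor$, so $\dim V_t^{\ev} = \binom{d + \lfloor t/2 \rfloor}{\lfloor t/2 \rfloor} = O_t(d^{\lfloor t/2 \rfloor})$. (On the sphere I would work in the quotient by the ideal generated by $|x|^2 - 1$, which only decreases the dimension.) Next I would set $N = \dim V_t^{\ev}$ and choose $x_1, \dots, x_N$ inductively: given $x_1, \dots, x_k$ with $k < N$, the evaluation map $V_t^{\ev} \to \R^k$ has a nontrivial kernel containing some nonzero $p$; since $p$, viewed as a function on $\R^d$ or on $S^{d-1}$ modulo $|x|^2-1$ as appropriate, is not identically zero, some point $x_{k+1}$ satisfies $p(x_{k+1}) \neq 0$, and adding this point enlarges the image of the evaluation map by one dimension. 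After $N$ steps the map is a linear isomorphism.

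With the evaluation map an isomorphism, its adjoint is also an isomorphism, so there exist unique signed weights $w_1, \dots, w_N$ for which $\sum w_i\, p(x_i) = \int p\, d\nu$ on all of $V_t^{\ev}$. Symmetrizing yields a signed $t$-design with $2N = O_t(d^{\lfloor t/2 \rfloor})$ points, which proves the claim in both the Gaussian and spherical settings.

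The main obstacle is conceptual rather than technical: one must notice that antipodal symmetry is the right invariance to exploit for signed designs, and that the reduced design condition on antipodally symmetric configurations involves only $\binom{d + \lfloor t/2 \rfloor}{\lfloor t/2 \rfloor}$ linear equations rather than the $\binom{d+t}{t}$ suggested by the naive degrees-of-freedom heuristic. Once that observation is in hand, the construction is essentially a signed version of the classical cubature existence argument, matching the order of magnitude in \Cref{thm:delsarte-lower-bound}.
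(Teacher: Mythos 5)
There is a genuine gap, and it sits exactly at the point the paper identifies as the real difficulty. Antipodal symmetrization $x \mapsto -x$ only forces the weighted sum to vanish on monomials of \emph{odd total degree}. After symmetrizing, the constraints that remain are all monomials $x^\alpha$ with $|\alpha|$ even and $|\alpha| \leq t$, and this space is \emph{not} spanned by the monomials $x^{2\beta}$: for instance $x_1x_2$ has even total degree but is not of the form $x^{2\beta}$, its integral against $\nu$ is zero, yet nothing about an antipodally symmetric configuration forces $\sum_i w_i\, (x_i)_1 (x_i)_2 = 0$. Your count $\binom{d+\lfloor t/2\rfloor}{\lfloor t/2\rfloor}$ is the number of monomials with \emph{every individual exponent} even, whereas the span of all even-total-degree monomials of degree at most $t$ has dimension $\Theta_t(d^{2\lfloor t/2\rfloor})$. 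Your interpolation step (inductive point selection plus the adjoint of the evaluation map) is sound as far as it goes, but run on the correct constraint space it yields only $O_t(d^t)$ points for a $t$-design --- essentially the linear-algebraic bound of \Cref{thm:weighted-design-upper-bound}, and nowhere near $O_t(d^{\lfloor t/2\rfloor})$.

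To kill the monomials with an odd individual exponent but even total degree by symmetry, you need invariance under the coordinate-wise sign flips $x_i \mapsto -x_i$, not just the single global antipodal map, and that is where the work is. \Cref{thm:Sauermann} shows that any weighted collection of sign vectors $\epsilon_1,\dots,\epsilon_m \in \{\pm 1\}^d$ satisfying the required mean-zero conditions \eqref{eq:mean-zero-reflections} must have $m \geq \binom{d}{t}$, so one cannot symmetrize over a cheap sub-collection of sign flips; applied to a generic base set this route gives $\Omega_t(d^{2t})$ points for a $2t$-design. The paper escapes this by taking base points supported on only $t$ coordinates, so that the full orbit under $\{\pm 1\}^t \times S_d$ has at most $2^t d^t$ points, and then exploiting the $S_d$-symmetry of these orbits to reduce the surviving all-even-exponent constraints to one per partition of an integer at most $t$ --- a number of constraints independent of $d$ --- which a linear-independence-of-moment-vectors argument satisfies with constantly many orbits. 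Your proposal is missing both of these ingredients, and the first omission (the misidentification of the post-symmetrization constraint space) is fatal to the claimed bound.
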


By \Cref{thm:delsarte-lower-bound}, this is optimal up to the constant depending on $t$. This is genuinely surprising, because it means that, for fixed-strength designs, the constraints are not independent. They collude somehow behind the scenes. In fact, \Cref{thm:optimal-signed-designs} is an instance of a general phenomenon: The same bound holds for signed designs on any measure that is symmetric with respect to coordinate permutations and reflections (see \Cref{thm:signed-symmetric-measures}). All of which impugns the credibility of the degrees-of-freedom heuristic, making it hard to predict the true size of minimal weighted or unweighted $t$-designs.

The last collection of results in this paper addresses \emph{approximate} designs. Although approximate designs on the space of unitary matrices are well-studied in the quantum computing literature, their spherical counterparts seem unaddressed. Motivated by the research in unitary designs, we say that a weighted set of points $(X,w)$ is an \emph{$\epsilon$-approximate tensor $t$-design} if
\[
    \Big\lVert \sum_{x \in X} w(x)\,x^{\tensor t} - \int\limits_{\mathclap{S^{d-1}}} v^{\tensor t} \,d\mu(v) \Big\rVert_2 \leq \epsilon.
\]
Our main result in the final section is an asymptotically optimal determination of the minimum size of approximate designs.

\begin{mtheorem}\label{thm:approx-tensor-des}
    There is an $\epsilon$-approximate tensor $t$-design on $S^{d-1}$ with $\epsilon^{-2}$ points, and any such design has at least $\epsilon^{-2} - o_{d\to\infty}(1)$ points.
\end{mtheorem}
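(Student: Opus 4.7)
The plan is a short second-moment argument. I would draw $N = \lceil \epsilon^{-2}\rceil$ points $X_1,\dots,X_N$ independently and uniformly from $S^{d-1}$ and set each weight $w_i = 1/N$. Since $\lVert X_i^{\otimes t}\rVert_2^2 = \lVert X_i\rVert_2^{2t} = 1$ almost surely, independence of the samples gives
\[
    \E\,\bigg\lVert \frac{1}{N}\sum_{i=1}^N X_i^{\otimes t} - \Sint v^{\otimes t}\, d\mu\bigg\rVert_2^2
    = \frac{1}{N}\bigg(1 - \Big\lVert\Sint v^{\otimes t}\, d\mu\Big\rVert_2^2\bigg)
    \leq \frac{1}{N} \leq \epsilon^2,
\]
so the probabilistic method selects a deterministic realization that is an $\epsilon$-approximate tensor $t$-design with $\epsilon^{-2}$ points.

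\textbf{Lower bound via a Gram expansion.} For the matching lower bound, let $(X,w)$ be an $\epsilon$-approximate tensor $t$-design with $|X| = N$ and, following the standard weighted-design convention, assume $w\colon X \to \R_{>0}$ with $\sum_i w_i = 1$. Put $\alpha_{d,t} := \Sint \langle x, v\rangle^t\, d\mu(v)$, which is independent of the unit vector $x$ by rotational invariance. Expanding the squared tensor norm with $\langle x_i^{\otimes t}, x_j^{\otimes t}\rangle = \langle x_i, x_j\rangle^t$ collapses the cross and integral terms to yield
\[
    \epsilon^2 \geq \bigg\lVert \sum_i w_i x_i^{\otimes t} - \Sint v^{\otimes t}\, d\mu \bigg\rVert_2^2
    = \sum_{i,j} w_i w_j\langle x_i, x_j\rangle^t - \alpha_{d,t}.
\]
A standard surface-integral computation gives $\alpha_{d,t} = (t-1)!!/\bigl(d(d+2)\cdots(d+t-2)\bigr) = O_t(d^{-t/2})$ for even $t$. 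When $t$ is even each $\langle x_i, x_j\rangle^t \geq 0$, so the double sum is at least its diagonal $\sum_i w_i^2$, and Cauchy--Schwarz with $\sum_i w_i = 1$ gives $\sum_i w_i^2 \geq 1/N$. Assembling the inequalities,
\[
    \frac{1}{N} \leq \epsilon^2 + O_t(d^{-t/2}),
    \quad \text{hence} \quad
    N \geq \frac{1}{\epsilon^2 + O_t(d^{-t/2})} = \epsilon^{-2} - o_{d\to\infty}(1).
\]

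\textbf{Anticipated obstacle.} The principal subtlety is the odd-$t$ case: then $\langle x_i, x_j\rangle^t$ is not pointwise nonnegative, and an antipodal pair $\{x, -x\}$ with equal weights already annihilates $\sum_i w_i x_i^{\otimes t}$ with $N = 2$, so the positivity-of-diagonal step collapses. I would handle this either by adopting the convention that the tensor approximation is required at every degree $\leq t$ (so an even companion degree controls $N$), or by an antipodal symmetrization that pairs $x_i$ with $-x_i$ and applies the even-$t$ bound to the symmetrized design. Beyond this case distinction the argument is essentially the bilinear identity for the kernel $(x,y)\mapsto\langle x,y\rangle^t$ together with elementary moment computations on $S^{d-1}$.
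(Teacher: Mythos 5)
Your proposal is correct and follows essentially the same route as the paper: the upper bound is the identical second-moment/probabilistic argument (you phrase the variance computation vectorially where the paper sums over tensor coordinates $f_\beta$, but the key identity $\sum_\beta \int f_\beta^2\,d\mu = \int \lvert x\rvert^{4t}\,d\mu = 1$ is the same fact as your $\lVert X_i^{\otimes t}\rVert_2 = 1$), and the lower bound is the same Gram expansion with the cross terms controlled by the spherical moment $\Theta_t(d^{-t})$ and the diagonal bounded below by $1/N$ via Cauchy--Schwarz. Your flagged odd-$t$ obstacle is real and is exactly why the paper defines and proves the tensor-approximate results only for even strength $2t$ (handling odd strengths by antipodal doubling), so no further repair is needed.
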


That section also proposes a non-equivalent definition of approximate designs which is motivated by numerical approximation to integrals. We prove lower bounds for this type of approximate design by modifying the linear programming method of Delsarte, Goethals, and Seidel \cite{delsarte-spherical-codes-designs} to accommodate approximation. Upper bounds for both types of approximate designs are derived using the probabilistic method.


The rest of the paper is organized as follows: \Cref{sec:history} outlines previously known constructions and bounds for spherical designs and provides a new, short proof of \Cref{thm:delsarte-lower-bound}. \Cref{sec:gaussian-designs} provides explicit conversions between spherical and Gaussian designs, in preparation for \Cref{sec:smaller-weighted-designs}, in which we prove \Cref{thm:unweighted-gaussian-design,thm:t-wise-ind} and \Cref{thm:smaller-sph-designs}. We prove \Cref{thm:optimal-signed-designs} in \Cref{sec:signed-designs}. The results on approximate designs, including \Cref{thm:approx-tensor-des}, appear in \Cref{sec:approximate-designs}. \Cref{sec:open} concludes the paper with a collection of open problems and further research directions.

\section{Prior bounds for designs}\label{sec:history}

\subsection{Lower bound}

The first lower bounds on the sizes of designs come from Delsarte, Goethals, and Seidel's seminal paper \cite{delsarte-spherical-codes-designs}. Their proof of \Cref{thm:delsarte-lower-bound} uses special functions and representation theory of functions on the sphere to prove that any spherical $2t$-design in $\R^d$ has at least
\[
    \binom{d+t-1}{d-1} + \binom{d+t-2}{d-1} = O_t(d^t)
\]
points. We will modify their argument to prove bounds on approximate designs in \Cref{sec:approximate-designs}. Here, we give a new, concise proof of this result that has the advantage of also working for signed designs, simplifying the linear-algebraic approach in \cite[Proposition 2.1]{designs-compact-alg-manifolds}.

Given a measure $\nu$ on $\R^d$, we let $\mathcal P_t^\nu$ denote the vector space of polynomial functions on the support of $\nu$. (Since elements of $\mathcal P_t^\nu$ are \emph{functions}, not the polynomials themselves, they may have multiple representations as polynomials. For example, the polynomials $x_1^2 + \cdots + x_d^2$ and $1$ represent the same element of $\mathcal P_t^\mu$, since the support of $\mu$ is the unit sphere.)

\begin{proposition}\label{thm:weighted-design-lower-bound}
    Any signed $2t$-design for a probability measure $\nu$ has at least $\dim(\mathcal P_t^\nu)$ points.
\end{proposition}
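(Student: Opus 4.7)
The plan is to prove injectivity of the evaluation map $\ev \colon \mathcal P_t^\nu \to \R^X$ defined by $\ev(f) = (f(x))_{x \in X}$; the bound $|X| \geq \dim \mathcal P_t^\nu$ follows immediately since the domain injects into a space of dimension $|X|$.

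To show injectivity, suppose $f \in \mathcal P_t^\nu$ satisfies $f(x) = 0$ for every $x \in X$. Then $f^2$ is represented by a polynomial of degree at most $2t$, so $f^2 \in \mathcal P_{2t}^\nu$ and the signed design identity applies:
\[
    \int f^2\, d\nu \;=\; \sum_{x \in X} w(x)\, f(x)^2 \;=\; 0.
\]
Crucially, the right-hand side vanishes regardless of the signs of $w(x)$, which is exactly why the argument accommodates signed designs. Since $\nu$ is a (nonnegative) probability measure and $f^2 \geq 0$ pointwise, this forces $f^2 = 0$ $\nu$-almost everywhere, and therefore $f = 0$ on the support of $\nu$. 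By definition of $\mathcal P_t^\nu$ as polynomial \emph{functions} on $\supp(\nu)$, this means $f$ is the zero element of $\mathcal P_t^\nu$, establishing injectivity.

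There is no real obstacle in this proof — the content of the argument is the observation that passing to $f^2$ converts a linear vanishing condition at the design points into a nonnegative integral against $\nu$, at which point the signs of the weights become irrelevant. This is the simplification over \cite[Proposition 2.1]{designs-compact-alg-manifolds} alluded to in the paragraph preceding the statement: one does not need a separate argument to handle negative weights, because squaring neutralizes them automatically.
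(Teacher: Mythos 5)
Your proof is correct and is essentially identical to the paper's: both establish injectivity of the evaluation map $\mathcal P_t^\nu \to \R^X$ by applying the signed $2t$-design identity to the square of a degree-$t$ function vanishing on $X$ (the paper phrases it as $(f-g)^2$ for $\phi(f)=\phi(g)$, you as $f^2$ for $f$ in the kernel, which is the same thing for a linear map). Your added remark that squaring makes the signs of the weights irrelevant is exactly the point of the paper's simplification.
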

\begin{proof}
    Let $(X,w)$ be an signed $2t$-design. We claim that the linear transformation $\phi\colon \mathcal P_{t}^\nu \to \R^X$ by $\phi(f) = \big(f(x)\big)_{x \in X}$ is injective. Indeed, if $\phi(f) = \phi(g)$, then
    \[
        \int\limits_{\mathclap{S^{d-1}}} (f-g)^2\, d\nu = \sum_{x \in X} w(x)\, \big(f(x) - g(x)\big)^2 = 0,
    \]
    so $f = g$. Therefore $|X| \geq \dim(\mathcal P_t^\nu)$.
\end{proof}

The beginning of Section \ref{sec:gegenbauer-appendix} outlines a proof that $\dim(\mathcal P_t^\mu) = \binom{d+t-1}{d-1} + \binom{d+t-2}{d-1}$, which proves the lower bound for spherical designs. The support of the Gaussian probability measure $\rho$ is all of $\R^d$, so every polynomial represents a different function; therefore any Gaussian $t$-design on $\R^d$ has at least $\dim(\mathcal P_t^\rho) = \binom{d+t}{t}$ points.

\subsection{Upper bounds}

In this section we review a general upper bound for designs from Kane's paper on design problems \cite{kane-small-designs}. Kane's paper addresses designs in a very general setting and is mainly focused on producing unweighted designs. However, Lemma 3 in \cite{kane-small-designs} provides a linear-algebraic upper bound for weighted designs that complements the bound in \Cref{thm:weighted-design-lower-bound}.

\begin{proposition}[Kane {\cite[Lemma 3]{kane-small-designs}}]\label{thm:weighted-design-upper-bound}
    There is a weighted $2t$-design with at most $\dim(\mathcal P_{2t}^\nu)$ points.
\end{proposition}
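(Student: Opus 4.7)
The plan is to prove this via Tchakaloff's theorem, whose standard proof applies Carathéodory's theorem to a moment map. Set $N := \dim(\mathcal P_{2t}^\nu)$ and consider the map $\phi\colon \supp(\nu) \to \mathcal P_{2t}^{\nu,*}$ defined by $\phi(x) = \ev_x$, the evaluation functional at $x$. The integration functional $I\colon f \mapsto \int f\, d\nu$ lies in the $N$-dimensional space $\mathcal P_{2t}^{\nu,*}$, and a weighted pair $(X,w)$ with positive weights summing to $1$ is a $2t$-design for $\nu$ exactly when $I = \sum_{x \in X} w(x)\, \ev_x$. So the task reduces to writing $I$ as a convex combination of at most $N$ evaluation functionals.

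The key step is to show that $I$ lies in the convex hull of $\{\ev_x : x \in \supp(\nu)\}$ inside $\mathcal P_{2t}^{\nu,*}$. If not, the separating hyperplane theorem produces some $f \in \mathcal P_{2t}^\nu$ and $c \in \R$ with $f(x) \geq c$ for every $x \in \supp(\nu)$ yet $I(f) < c$; integrating the nonnegative function $f - c$ against the positive measure $\nu$ then gives a contradiction. Carathéodory's theorem now expresses $I$ as a convex combination of at most $N + 1$ evaluation functionals. To shave the count from $N+1$ to $N$, I would include the constant function $1$ in a basis for $\mathcal P_{2t}^\nu$ and pass to the conic version of Carathéodory: any conic combination $\sum \lambda_x \ev_x$ equal to $I$ automatically satisfies $\sum \lambda_x = 1$ (by evaluating both sides on $1$), so a representation as a conic combination coincides with a representation as a convex combination, and conic Carathéodory in an $N$-dimensional space uses only $N$ generators.

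The main obstacle is that $\supp(\nu)$ may be non-compact --- for instance $\supp(\rho) = \R^d$ in the Gaussian case --- which means the convex hull of $\phi(\supp(\nu))$ need not be closed, so the separating-hyperplane step must be handled carefully. In the compact case (such as $\nu = \mu$ on $S^{d-1}$), the image $\phi(\supp(\nu))$ is compact and its convex hull is automatically closed, so there is nothing more to do. For a general probability measure with finite polynomial moments I would either approximate $\nu$ by compactly supported $\nu_n \to \nu$ in an appropriate sense, extract an $N$-point representation for each $\nu_n$, and take a convergent subsequence of support points and weights (confining the support points to a fixed ball using the finite-moment condition together with Markov's inequality), or simply invoke the Bayer--Teichmann extension of Tchakaloff's theorem, which handles arbitrary Borel measures with finite polynomial moments.
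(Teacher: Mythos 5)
The paper does not actually prove this proposition --- it is quoted verbatim from Kane's paper as \cite[Lemma 3]{kane-small-designs}, whose proof is the same convexity argument you give: realize the integration functional as a point of the convex hull of the evaluation functionals $\ev_x$ inside the $N$-dimensional dual space, then apply Carath\'eodory. Your write-up is correct and complete in the compactly supported case (in particular for $\nu = \mu$ on $S^{d-1}$), and the trick of passing to conic Carath\'eodory and recovering $\sum_x \lambda_x = 1$ by evaluating on the constant function is a legitimate way to shave $N+1$ down to $N$ (Kane instead exploits connectedness of the support via Fenchel's sharpening of Carath\'eodory; both work here). You are also right to flag that strict separation can fail when $\conv\{\ev_x : x \in \supp(\nu)\}$ is not closed, as for the Gaussian. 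Of your two proposed fixes, citing Bayer--Teichmann is solid and settles the general case. The truncation-and-limit sketch is the weaker of the two: Markov's inequality controls how much \emph{probability} mass the intermediate $N$-point designs place far from the origin, but not how much \emph{top-degree moment} mass they place there --- a node $x_n \to \infty$ with weight $w_n \asymp |x_n|^{-2t}$ contributes $\Theta(1)$ to the degree-$2t$ moments and can carry that contribution off to infinity in the limit, so some uniform-integrability argument for $|x|^{2t}$ would still be needed to make that route rigorous.
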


Kane's linear-algebraic construction provides the baseline to improve upon in this paper. For spherical and Gaussian designs, it gives:

\begin{corollary}\label{thm:kane-spherical-upper-bound}
    There is a weighted spherical $2t$-design in $\R^d$ with at most $\binom{d+2t-1}{d-1} + \binom{d+2t-2}{d-1} = O_t(d^{2t})$ points and a weighted Gaussian $2t$-design with at most $\binom{d+2t}{2t} = O_t(d^{2t})$ points.
\end{corollary}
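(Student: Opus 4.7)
The plan is to instantiate \Cref{thm:weighted-design-upper-bound} with $\nu = \mu$ (the normalized surface measure on $S^{d-1}$) and with $\nu = \rho$ (the standard Gaussian measure on $\R^d$); the corollary then reduces to computing $\dim(\mathcal P_{2t}^\nu)$ in each case. There is no serious obstacle here beyond a dimension count; both computations are standard, and I expect the spherical one to be the only place requiring any real thought.

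For the Gaussian case, I would observe that $\rho$ has full support in $\R^d$, so two polynomials of degree $\leq 2t$ that agree as functions on $\supp \rho = \R^d$ must coincide as polynomials. Consequently $\mathcal P_{2t}^\rho$ is canonically isomorphic to the entire space of polynomials in $d$ variables of degree at most $2t$, whose dimension equals the number of monomials of degree at most $2t$, namely $\binom{d+2t}{2t}$. Plugging this into \Cref{thm:weighted-design-upper-bound} yields the Gaussian half of the claim.

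For the spherical case, the identity $x_1^2 + \cdots + x_d^2 = 1$ holds on $\supp \mu = S^{d-1}$, so $\mathcal P_{2t}^\mu$ is naturally identified with the quotient of the space of polynomials of degree $\leq 2t$ by the subspace of multiples of $\|x\|^2 - 1$ of degree $\leq 2t$. Since $\|x\|^2 - 1$ is a nonzero element of the integral domain $\R[x_1,\ldots,x_d]$, multiplication by it is injective, so that subspace has dimension $\binom{d+2t-2}{2t-2}$, giving
\[
    \dim(\mathcal P_{2t}^\mu) = \binom{d+2t}{2t} - \binom{d+2t-2}{2t-2}.
\]
Applying Pascal's identity $\binom{d+2t}{2t} = \binom{d+2t-1}{2t} + \binom{d+2t-1}{2t-1}$ and then $\binom{d+2t-1}{2t-1} = \binom{d+2t-2}{2t-1} + \binom{d+2t-2}{2t-2}$ collapses the right-hand side to $\binom{d+2t-1}{2t} + \binom{d+2t-2}{2t-1}$, which is the claimed sum after rewriting $\binom{d+k}{k} = \binom{d+k}{d-1}$. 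Finally, the $O_t(d^{2t})$ bound is immediate: for each fixed $t$, the binomial coefficient $\binom{d+k}{d-1}$ is a polynomial in $d$ of degree $k+1$, so both expressions grow like $d^{2t}$ as $d \to \infty$.
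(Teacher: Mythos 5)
Your proposal is correct, and at the top level it is the same proof as the paper's: instantiate \Cref{thm:weighted-design-upper-bound} with $\nu=\mu$ and $\nu=\rho$ and compute $\dim(\mathcal P_{2t}^\nu)$; your Gaussian count is exactly the paper's (full support, so polynomials of degree $\leq 2t$ inject into $\mathcal P_{2t}^\rho$). The one place you diverge is the spherical dimension count: the paper obtains $\dim(\mathcal P_{2t}^\mu)=\binom{d+2t-1}{d-1}+\binom{d+2t-2}{d-1}$ in \Cref{sec:gegenbauer-appendix} via the harmonic decomposition $\mathcal P_{2t}^\mu=\bigoplus_{k=0}^{2t}\mathcal W_k$ with $\dim\mathcal W_k=\binom{d+k-1}{d-1}-\binom{d+k-3}{d-1}$ and a telescoping sum, whereas you quotient the degree-$\leq 2t$ polynomials by the degree-$\leq 2t$ multiples of $\lVert x\rVert^2-1$. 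Your route is more elementary and self-contained, but it silently uses that the kernel of restriction to $S^{d-1}$ is \emph{exactly} the set of such multiples, i.e.\ that the vanishing ideal of the sphere is the principal ideal generated by $\lVert x\rVert^2-1$; that is true but deserves a sentence (e.g.\ divide by $x_d^2-(1-x_1^2-\cdots-x_{d-1}^2)$ in the variable $x_d$ and kill the remainder on the open ball). Conveniently, for this corollary only the easy containment is needed: Kane's bound only requires the upper estimate $\dim(\mathcal P_{2t}^\mu)\leq\binom{d+2t}{2t}-\binom{d+2t-2}{2t-2}$, which follows from the obvious inclusion of the multiples into the kernel; exactness only matters for the matching lower bound in \Cref{thm:weighted-design-lower-bound}. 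Your binomial algebra checks out, and the resulting formula agrees with the paper's. One cosmetic slip: the symmetry identity you invoke should be $\binom{(d-1)+k}{k}=\binom{(d-1)+k}{d-1}$ rather than $\binom{d+k}{k}=\binom{d+k}{d-1}$ (the latter is false as written), though the two concrete instances you actually use are applied correctly.
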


An important special case of Kane's result, which we will refer to later, is probability measures on $\R$:

\begin{proposition}\label{thm:moment-approximation}
    For any probability distribution $\nu$ on $\R$ with connected support and any integer $t\geq 0$, there is a probability distribution on at most $t+1$ points so that the first $t$ moments of the two distributions are equal.
\end{proposition}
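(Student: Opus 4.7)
My plan is to recognize the proposition as the one-dimensional case of Tchakaloff's theorem and prove it via Carath\'eodory's theorem on the moment curve. Define the moment map $\Phi\colon \supp(\nu) \to \R^{t+1}$ by $\Phi(x) = (1, x, x^2, \ldots, x^t)$, so the moment vector of $\nu$ is $m = \int \Phi(x)\, d\nu(x)$. A representation $m = \sum_{i=1}^{t+1} w_i \Phi(x_i)$ with $w_i \geq 0$ and $x_i \in \supp(\nu)$ would finish the proof: the zeroth coordinate forces $\sum_i w_i = 1$, so $\sum_i w_i \delta_{x_i}$ is a probability distribution, and the remaining coordinates encode exactly the moment-matching identities for degrees $1$ through $t$.

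The first step is to place $m$ in the convex hull $C$ of the moment curve $\Phi(\supp(\nu))$. For compactly supported $\nu$ this is immediate: an integral against a probability measure lies in the closed convex hull of the integrand's range, and the range is compact, so the closed convex hull is already $C$. For unbounded support, I would truncate $\nu$ to $[-n,n]$, renormalize, apply the bounded case to extract a $(t+1)$-atom representation, and then pass to a subsequential limit as $n \to \infty$; the weights live in a simplex and are automatically precompact, while connectedness of $\supp(\nu)$ helps control the nodes.

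Once $m \in C$ is established, the proof concludes by a dimension count. The image $\Phi(\supp(\nu))$ lies in the affine hyperplane $\{y \in \R^{t+1} : y_0 = 1\}$ of affine dimension $t$, so Carath\'eodory's theorem applied inside this hyperplane expresses $m$ as a convex combination of at most $t+1$ points of the moment curve, yielding exactly the desired discrete distribution.

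The main obstacle I anticipate is the unbounded-support case of step one, specifically ruling out mass escaping to infinity when taking the limit of truncated representations. The connectedness hypothesis is useful precisely here: it forces $\supp(\nu)$ to be an interval, so the limiting nodes either sit inside the support or can be replaced by boundary points of the support without affecting the moment identities. Once $m \in C$ is secured, the Carath\'eodory step is purely formal convex geometry.
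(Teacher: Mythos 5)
Your overall strategy is sound: this proposition is precisely the one-dimensional Tchakaloff theorem, and your Carath\'eodory step is carried out correctly --- $\Phi(\supp(\nu))$ lies in the affine hyperplane $\{y_0=1\}$ of dimension $t$, so any point of its convex hull is a convex combination of at most $t+1$ points of the moment curve, and the coordinates of such a combination are exactly the required moment identities. The compactly supported case is complete. (For what it is worth, the paper gives no proof at all here, quoting the statement as a special case of Kane's Lemma 3, so a self-contained argument along your lines is a genuine addition.)

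The gap is in the unbounded case of your Step 1, and it is exactly the obstacle you flag but do not resolve: in the truncate-and-pass-to-the-limit argument, a node $x_i^{(n)}\to\infty$ with weight $w_i^{(n)}\to 0$ can carry a nonvanishing amount of the top moment (one can have $w_i^{(n)}(x_i^{(n)})^t\not\to 0$ even though $w_i^{(n)}(x_i^{(n)})^j\to 0$ for all $j<t$), so the limiting atomic measure may match moments $0,\dots,t-1$ but fall short on the $t$-th. Connectedness of $\supp(\nu)$ does not repair this: it says the support is an interval, but an unbounded interval has no boundary point at infinity to which an escaping node could be retracted, and nothing in the construction forces the truncated representations to keep their nodes bounded. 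The clean fix is to prove $m\in\conv\big(\Phi(\supp(\nu))\big)$ directly, with no truncation: if $m$ lay outside the convex hull $C$, then in particular $m$ is not in the relative interior of $C$, so the proper separation theorem in $\R^{t+1}$ gives a nonzero linear functional $\ell$ with $\ell\geq\ell(m)$ on $C$ and $\sup_C\ell>\ell(m)$; but $\int\ell(\Phi(x))\,d\nu(x)=\ell(m)$ together with $\ell\circ\Phi\geq\ell(m)$ on $\supp(\nu)$ forces $\ell\circ\Phi\equiv\ell(m)$ on $\supp(\nu)$, hence $\ell\equiv\ell(m)$ on $C$, a contradiction. (This uses that $\Phi$ is a proper map, so $\Phi(\supp(\nu))$ is closed and contains the support of $\Phi_*\nu$, and that the first $t$ moments of $\nu$ are finite so the barycenter exists.) With that substitution your Carath\'eodory step goes through verbatim; note that connectedness of the support is then not actually needed for this proposition.
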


On a related note, Seymour and Zaslavsky actually proved the existence not just of spherical designs, but of designs for any nice enough measure \cite{Seymour-Zaslavsky}. Applied to one-dimensional probability measures, it gives an unweighted version of \Cref{thm:moment-approximation}, though the size of the averaging set may be much larger.

\begin{proposition}\label{thm:moment-approximation-unweighted}
    For any probability distribution $\nu$ on $\R$ with connected support and any integer $t\geq 0$, there is an $N > 0$ such that: for any $n > N$, there is a finite set $Y$ of $n$ real numbers so that the first $t$ moments of $\nu$ are equal to the first $t$ moments of the uniform distribution on $Y$.
\end{proposition}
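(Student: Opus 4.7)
This is the one-dimensional instance of the general Seymour-Zaslavsky existence theorem, and the plan is to adapt their strategy using Proposition \ref{thm:moment-approximation} as input. That proposition provides a positively weighted moment-matching distribution $(X_0, w_0)$ on at most $t+1$ points of $\supp(\nu)$; the task is then to convert this weighted distribution into an unweighted one of every sufficiently large cardinality, via three stages.

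First, I would enlarge the support. Starting from $(X_0, w_0)$, I iteratively split a point $x_i$ into two nearby points $x_i \pm \epsilon \in \supp(\nu)$ and rebalance the weights of the resulting configuration to restore the first $t$ moments. Since the Vandermonde matrix on $t+1$ distinct points is nonsingular, the moment map has surjective differential and the rebalancing succeeds via the implicit function theorem for $\epsilon$ small. Connectedness of $\supp(\nu)$ guarantees that the new points remain in the support. This yields positively weighted moment-matching distributions on exactly $k$ distinct points of $\supp(\nu)$ for every $k \geq t+1$, forming a smooth $(k-t-1)$-parameter family.

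Second, I would rationalize the weights. Fixing $k \geq t+2$, the family above has positive dimension, and within any nonempty relatively open subset of a positive-dimensional smooth submanifold the rational points with a prescribed denominator become dense as that denominator grows. So for every sufficiently large integer $q$ I can find a configuration with rational weights $w_i = p_i/q$ summing to $1$. Replicating the $i$-th point $p_i$ times produces an unweighted moment-matching multiset $Y_q$ of total size $q$.

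Third, I would fill in every sufficiently large $n$. Selecting two coprime denominators $q$ and $q+1$ yields unweighted multisets of coprime sizes, and disjoint unions of $\alpha$ copies of $Y_q$ with $\beta$ copies of $Y_{q+1}$ form moment-matching multisets of every size $\alpha q + \beta(q+1)$ (moments of a disjoint union are a convex combination of the constituent moments). By the Chicken McNugget theorem every $n \geq q(q-1)$ arises in this way, so $N := q(q-1)$ suffices. I expect the rationalization step to be the subtlest, specifically the verification that denominators can be prescribed to take coprime values; the cleanest route is to work on the smooth moment-matching manifold (after varying both positions and weights) and to use that this submanifold meets a dense set of rational points with any prescribed sufficiently large denominator.
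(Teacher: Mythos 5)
The paper does not actually prove this proposition: it is stated as the one-dimensional specialization of the cited Seymour--Zaslavsky theorem (\Cref{thm:seymour-zaslavsky} applied to a measure on $\R$ rather than on the sphere), so you are supplying an argument where the paper supplies only a citation. Your architecture --- enlarge the support, rationalize the weights, then combine two coprime sizes via the Frobenius (Chicken McNugget) step --- is a reasonable reconstruction of how such a proof goes, and the first and third stages are sound as sketched.

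There are, however, two genuine gaps. First, your rationalization claim is false as literally stated: a positive-dimensional submanifold (even an affine subspace) of $\R^k$ need not contain \emph{any} rational points --- a line of irrational slope in $\R^2$ already fails --- and the affine space of weight vectors matching the moments of $\nu$ at fixed positions is cut out by equations with irrational coefficients. The repair you gesture at is the right one but needs to be made precise: vary the positions as well, verify that the projection of the moment-matching manifold onto weight space is a submersion (the derivative in the position variables is a nonsingular Vandermonde-type matrix), conclude that the attainable weight vectors contain an \emph{open} subset of the simplex, and only then invoke the density of denominator-$q$ rationals in an open set. Second, and more seriously, your final object is a multiset: $Y_q$ repeats the point $x_i$ with multiplicity $p_i$, and the union of $\alpha$ copies of $Y_q$ with $\beta$ copies of $Y_{q+1}$ repeats points further. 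The statement demands a \emph{set} of $n$ distinct reals carrying weight $1/n$ each; the empirical distribution of a multiset is not the uniform distribution on its underlying set (and the application in \Cref{thm:unweighted-gaussian-design} genuinely needs $q$ distinct values to identify with $\{1,\dots,q\}$). Closing this requires one more perturbation with all weights pinned at $1/n$ and only the $n$ positions free: the moment map on positions is a submersion wherever at least $t$ distinct values occur, so its level set is a manifold of dimension $n-t\geq 1$ that is not locally contained in any diagonal hyperplane $\{y_i=y_j\}$, and hence contains nearby points with all coordinates distinct. That last step is exactly the delicate part of Seymour and Zaslavsky's argument --- the paper itself remarks that their weighted-to-unweighted conversion ``is not trivial'' and requires the Inverse Function Theorem --- and it is the step your proposal omits.
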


\subsection{Optimal constructions for small \texorpdfstring{$t$}{t}}

Although there are not many constructions of fixed-strength $t$-designs, there are some for 2- and 4-designs which meet the lower bound of \Cref{thm:delsarte-lower-bound}.

The standard basis vectors and their negations (which together form the vertices of a cross-polytope), form a spherical 2-design in $\R^d$ for every $d$, which can be confirmed by calculating the moments of the point set and comparing to the moments of the sphere.

In a 1982 paper \cite{levenstein-codes}, Leven\v{s}te\u{\i}n implicitly constructed an unweighted spherical 4-design with $d(d+2)$ points whenever $d$ is a power of $4$, using the binary Kerdock codes (see \cite{kerdock} for an explanation of the Kerdock codes). Additionally, Noga Alon and Hung-Hsun Hans Yu constructed a 4-design with $4d(d+2)$ points whenever $d$ is a power of $2$, using $2$-wise independent subsets of $\{\pm 1\}^d$ \cite{hans-communication}.

\section{Converting spherical and gaussian designs}\label{sec:gaussian-designs}

In this section, we show the connection between Gaussian and spherical designs and how to obtain either of these designs from the other. We also use this connection to ``project'' a spherical design to lower-dimensional spheres.

We begin with an explicit definition of Gaussian designs. For the rest of the paper, $\rho$ denotes the Gaussian probability measure on $\R^d$ given by $d\rho = e^{-\pi |x|^2}\, dx$.

\begin{definition}
    A set $X \subseteq \R^d$ and a weight function $w\colon X \to \R_{>0}$ are together called a \emph{weighted Gaussian $t$-design} if for every polynomial $f$ of degree at most $t$,
    \[
        \int\limits_{\mathclap{\R^d}} f(x)\, d\rho = \sum_{x \in X} w(x) f(x).
    \]
    If $w(x) = 1/|X|$ for each $x \in X$, then $X$ is called an \emph{unweighted} Gaussian $t$-design; if $w\colon X \to \R$, the design is called \emph{signed}.
\end{definition}

The key connection between the spherical and Gaussian probability measures is that, for any homogeneous polynomial $f$ of degree $k$,
\begin{equation}\label{eq:spherical-to-gaussian-integral}
    \int\limits_{\mathclap{\R^d}} f\, d\rho
    = \sigma_d \cdot \Big(\int\limits_{\mathclap{S^{d-1}}} f\, d\mu \Big) \Big(\int_0^\infty r^{k + d - 1} e^{-\pi r^2}\, dr\Big),
\end{equation}
where $\sigma_d$ is a constant (explicitly, $\sigma_d = 2\pi^{d/2}/\,\Gamma(d/2)$).

\subsection{Producing Gaussian designs from spherical, and vice versa}

\begin{proposition}\label{thm:spherical-to-Gaussian}
    If there is a weighted spherical $t$-design of size $N$ in $\R^d$, then there is a weighted Gaussian $t$-design in $\R^d$ with at most $(t+1)N$ points.
\end{proposition}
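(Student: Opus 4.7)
The plan is to decouple the angular and radial parts of the Gaussian integral and use the spherical design for the angles and the one-dimensional moment lemma (\Cref{thm:moment-approximation}) for the radii. Concretely, let $(X,w)$ be the given weighted spherical $t$-design. I would build the Gaussian design as a union of rescaled copies of $X$: choose radii $0 \le r_1 < r_2 < \cdots < r_{t+1}$ with weights $\alpha_1,\dots,\alpha_{t+1} > 0$, and consider the weighted point set
\[
    Y \;=\; \{r_i x : x \in X,\, 1 \le i \le t+1\}, \qquad \widetilde w(r_i x) \;=\; \alpha_i \, w(x).
\]
This has at most $(t+1)N$ points, so everything reduces to choosing the $(r_i, \alpha_i)$ correctly.

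The key observation is that it suffices to check the Gaussian design identity on monomials, and in fact on each homogeneous component separately. For a homogeneous polynomial $f$ of degree $k \le t$, homogeneity gives $f(r_i x) = r_i^k f(x)$, and the spherical design property yields
\[
    \sum_{x \in X,\, i} \widetilde w(r_i x)\, f(r_i x)
    \;=\; \Bigl(\sum_{i=1}^{t+1} \alpha_i r_i^k\Bigr) \sum_{x\in X} w(x) f(x)
    \;=\; \Bigl(\sum_{i=1}^{t+1} \alpha_i r_i^k\Bigr) \Sint f\, d\mu.
\]
Comparing with the decomposition \eqref{eq:spherical-to-gaussian-integral} of the Gaussian integral, the design condition for every homogeneous $f$ of degree $k \le t$ reduces to the moment-matching requirement
\[
    \sum_{i=1}^{t+1} \alpha_i r_i^k \;=\; \sigma_d \int_0^\infty r^{k+d-1} e^{-\pi r^2}\, dr \qquad (k = 0, 1, \dots, t).
\]
The $k=0$ case says $\sum_i \alpha_i = 1$, since $\rho$ is a probability measure.

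To realize this, I would interpret the right-hand side as the first $t$ moments of the radial probability distribution on $[0,\infty)$ with density $\sigma_d \, r^{d-1} e^{-\pi r^2}$ (the push-forward of $\rho$ under $x \mapsto |x|$, which indeed has connected support and total mass $1$). \Cref{thm:moment-approximation} applied to this distribution produces a probability measure supported on at most $t+1$ points whose first $t$ moments agree with it; this yields the desired $r_i \ge 0$ and $\alpha_i > 0$. Extending the identity from homogeneous polynomials to arbitrary polynomials of degree $\le t$ is immediate by linearity after decomposing into homogeneous components. The only subtlety to flag is that the $\alpha_i$ are automatically positive because they come from an honest probability distribution — important for the output to be a genuine weighted (not merely signed) design — and the sole ``hard'' step is the radial moment-matching, which is packaged into \Cref{thm:moment-approximation}.
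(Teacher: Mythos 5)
Your proposal is correct, and its skeleton is the same as the paper's: form $\bigcup_i r_i X$ with weights $\alpha_i w(x)$, and choose the $(r_i,\alpha_i)$ by applying \Cref{thm:moment-approximation} to the radial distribution $\sigma_d r^{d-1}e^{-\pi r^2}\,dr$, so that the count $(t+1)N$ and the positivity of the weights come out identically. Where you diverge is the verification that radial moment-matching suffices. The paper decomposes $\mathcal Q_k$ into harmonic pieces $\mathcal W_i\cdot|x|^{2j}$, argues that every component with $i\geq 1$ has zero spherical average and hence is handled automatically by any union of rescaled copies of $X$, and then only checks the powers $|x|^{2j}$ against the radial moments. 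You instead observe that for \emph{any} homogeneous $f$ of degree $k\leq t$ the design sum over $\bigcup_i r_iX$ factors as $\bigl(\sum_i\alpha_i r_i^k\bigr)\int_{S^{d-1}}f\,d\mu$, which matches \eqref{eq:spherical-to-gaussian-integral} term by term once all radial moments up to order $t$ agree -- this covers the zero-average and nonzero-average cases uniformly and avoids the harmonic decomposition entirely. Your route is the more elementary of the two (it needs nothing beyond homogeneity and the spherical design property of $X$); the paper's harmonic detour buys the sharper observation that the scaled copies of $X$ already average correctly on every zero-mean homogeneous piece before the radii are even chosen, but that extra information is not needed for this statement. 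One minor point worth keeping in mind: if $r_i=0$ occurs, the copy $r_iX$ collapses to the origin, which only reduces the point count and causes no harm.
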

\begin{proof}
    Let $(X,w)$ be a weighted spherical $t$-design of $N$ points. As it turns out, if $P$ is a homogeneous polynomial and $\int_{S^{d-1}} P\,d\mu = 0$, then $(X,w)$ correctly averages $P$ over Gaussian space, as well. We will first prove that assertion, and then adjust $(X,w)$ so that it correctly averages the remaining polynomials of degree at most $t$ over Gaussian space.
    
    For any homogeneous polynomial $P$ that satisfies $\int_{S^{d-1}} P\, d\mu = 0$, we have
    \[
        \int\limits_{\mathclap{\R^d}} P\, d\rho
        = \sigma_d \cdot \Big(\int\limits_{\mathclap{S^{d-1}}} P\, d\mu\Big) \Big(\int_0^\infty r^{\deg P + d - 1} e^{- \pi r^2}\, dr\Big)
        = 0,
    \]
    so
    \[
        \int\limits_{\mathclap{\R^d}} P\, d\rho
        = 0
        = \int\limits_{\mathclap{S^{d-1}}} P\, d\mu
        = \sum_{x\in X} w(x)\, P(x).
    \]
    Moreover, for such a $P$ and any any $r > 0$, we have
    \[
        \sum_{x \in X} w(x)\, P(rx)
        = r^{\deg P} \sum_{x \in X} w(x)\, P(x)
        = r^{\deg P} \int\limits_{\mathclap{S^{d-1}}} P\, d\mu
        = 0.
    \]

    This motivates our strategy to find real numbers $r_1,\dots,r_{k}$ such that $\hat X = \bigcup_{i=1}^{t+1} r_i X$ is a Gaussian design. (Here, $r_i$ is a scaling factor, so $r_i X = \{ r_i x : x \in X\}$.) By using scaled copies of $X$, any homogeneous function $f$ for which $\sum_{x \in X} w(x) f(x) = 0$ will also have $\sum_{x \in \hat X} w(x) f(x) = 0$. This maintains the averages that are already correct.

    We now make use of a convenient basis for the vector space $\mathcal Q_k$ of all polynomials of degree $k$. (Recall that $\mathcal P_k^\mu$ is the vector space of polynomial \emph{functions} on $S^{d-1}$, so $1$ and $x_1^2 + \cdots + x_d^2$ represent the same element in $\mathcal P_k^\mu$ but different elements of $\mathcal Q_k$.) The vector space $\mathcal Q_k$ decomposes as
    \[
        \mathcal Q_k = \bigoplus_{\substack{i,j \geq 0 \\[2pt] i+2j \leq k}} \mathcal W_i\cdot |x|^{2j},
    \]
    where $\mathcal W_i$ is the set of homogeneous harmonic polynomials of degree $i$.\footnote{A polynomial $f$ is \emph{harmonic} if $\Delta f \equiv 0$, where $\Delta = \frac{\partial^2}{\partial x_1^2} + \cdots + \frac{\partial^2}{\partial x_d^2}$.} (Though not stated explicitly, the proof of Lemma 3.1 in \cite{henry-fourier-notes}, as a side effect, also proves this decomposition.) Homogeneous harmonic polynomials of different degrees are orthogonal, so in particular any $f \in \mathcal W_i$ with $i \geq 1$ is orthogonal to the constant function, which is just another way of saying that $\int_{S^{d-1}} f\,d\mu = 0$. Thus $\int_{S^{d-1}} f\,d\mu = 0$ for any $f \in \mathcal W_i \cdot \lvert x\rvert^{2j}$ with $i \geq 1$ and $j\geq 0$. This means that both $\int_{\R^d} f\,d\rho$ and $\sum_{x \in X} w(x)\, f(x)$ equal 0 for any $f \in \mathcal W_i \cdot \lvert x\rvert^{2j}$ with $i \geq 1$ and $j\geq 0$. Therefore, we only need to choose the $r_i$ so that the polynomials $|x|^{2j}$ average correctly.
    
    Let $\nu$ be the probability measure in $\R_{\geq 0}$ given by $d\nu = \sigma_d r^{d-1} e^{-\pi r^2}dr$. By \Cref{thm:moment-approximation}, there are real numbers $r_1,r_2,\dots,r_{t+1} \in \R$ that form a $t$-design for $\nu$ with some weights $\beta_1,\dots,\beta_{t+1}$. Then for any $0 \leq k \leq t/2$,
    \[
        \int\limits_{\mathclap{\R^d}} |x|^{2k}\,d\rho
        = \sigma_d \Big(\int\limits_{\mathclap{S^{d-1}}}  1\, d\mu\Big) \Big( \int_0^\infty r^{2k+d-1} e^{-\pi r^2}\,dr\Big)
        = \sum_{i=1}^{t+1} \beta_i\, r_i^{2k}
        = \sum_{i=1}^{t+1} \sum_{x \in X} \beta_i\, w(x)\, |r_i x|^{2k}.
    \]
    If we define the set $\hat X = \bigcup_{i=1}^{t+1} r_i X$ and the weight function $\hat w( r_i x) = \beta_i w(x)$, then $(\hat X, \hat w)$ is a Gaussian $t$-design with $(t+1)N$ points.
\end{proof}



Now we go in the opposite direction: constructing a spherical design from a Gaussian design.

\begin{proposition}\label{thm:Gaussian-to-spherical}
    If there is a weighted Gaussian $t$-design in $\R^d$ with $N$ points, there is a weighted spherical $t$-design in $\R^d$ with at most $2N$ points.
\end{proposition}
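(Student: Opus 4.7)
I would construct $(\hat X, \hat w)$ by radially projecting every point of $X$ to $S^{d-1}$, pairing each projection with its antipode (so $|\hat X| \leq 2N$), and choosing weights that involve a single power of $|x|$ to compensate for the Gaussian radial profile. The antipodal symmetrization disposes of the odd-degree homogeneous terms of any test polynomial---whose spherical averages vanish anyway---while the universal weight rescaling handles the even-degree terms by invoking the Gaussian design property on suitably homogenized polynomials.

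Concretely, let $t'$ be the largest even integer with $t' \leq t$, and set $m_{t'} = \int_{\R^d} |x|^{t'}\, d\rho$. I would define
\[
    \hat X = \bigcup_{\substack{x \in X\\ x \neq 0}} \{x/|x|,\ -x/|x|\}
    \qquad\text{and}\qquad
    \hat w(\pm x/|x|) = \frac{w(x)\,|x|^{t'}}{2\,m_{t'}},
\]
combining weights if two projections coincide. To check that $(\hat X, \hat w)$ is a spherical $t$-design, decompose a polynomial $f$ of degree $\leq t$ into homogeneous pieces $f = f_0 + \cdots + f_t$. Antipodal symmetry of $\hat w$ kills the odd-$k$ contributions to $\sum_y \hat w(y) f(y)$, matching the vanishing of $\int_{S^{d-1}} f_k\, d\mu$ for odd $k$. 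For each even $k \leq t$, the evaluation $f_k(\pm x/|x|) = f_k(x)/|x|^k$ combined with the weight produces a factor of $w(x)\,|x|^{t'-k} f_k(x)/m_{t'}$; since $t'$ and $k$ are both even, $|x|^{t'-k} f_k(x)$ is a polynomial of degree $t' \leq t$, so the Gaussian design averages it correctly. Equation \eqref{eq:spherical-to-gaussian-integral} then identifies the Gaussian average as $m_{t'} \int_{S^{d-1}} f_k\, d\mu$, and the normalizations cancel.

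\textbf{The hard part.} The central difficulty is choosing a weight correction that works simultaneously for all degrees $k \leq t$, since the naive rescaling $w(x)/|x|^k$ depends on $k$. The resolution is to lift each even-$k$ piece up to the fixed homogeneous degree $t'$ by multiplying by $|x|^{t'-k}$, which is a genuine polynomial exactly when $t'-k$ is even; this parity constraint is precisely what forces antipodal symmetrization to dispose of the opposite-parity terms for free. Once the construction is in place, the verification is a short calculation using \eqref{eq:spherical-to-gaussian-integral}. A minor issue is handling $0 \in X$ (where $x/|x|$ is undefined), but for $t' \geq 1$ such a point contributes weight $0$ anyway since $|0|^{t'} = 0$, and the Gaussian design averages $|x|^{t'}$ correctly, so the totals remain consistent.
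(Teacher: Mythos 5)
Your construction is essentially identical to the paper's: the paper also projects radially, weights by $w(x_i)\,r_i^{s}$ with $s = 2\lfloor t/2\rfloor$ normalized by $\sigma_d\int_0^\infty r^{s+d-1}e^{-\pi r^2}\,dr$ (which is exactly your $m_{t'}$), lifts lower even degrees via multiplication by $|x|^{s-2k}$, and handles odd degrees by antipodal symmetrization. The argument is correct.
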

\begin{proof}
    Let $(X,w)$ be Gaussian $t$-design and enumerate the points as $X = \{x_1,\dots,x_N\}$. We'll show that the projection of $X$ onto the sphere, with a certain set of weights, correctly averages all even-degree monomials over the sphere. To construct a full design, we then symmetrize $X$.
    
    Let $s = 2 \lfloor t/2\rfloor$, the largest even integer $\leq t$. We will first establish the claim for monomials with total degree $s$ and then use that to prove the claim for all even-degree monomials. Suppose that $P$ is such a monomial, and $y_i = x_i/|x_i|$ and $r_i = |x_i|$. We have
    \[
        \sigma_d \cdot \Big(\int\limits_{\mathclap{S^{d-1}}} P\, d\mu\Big) \Big(\int_0^\infty r^{s + d - 1} e^{-\pi r^2}\, dr\Big)
        = \int\limits_{\mathclap{\R^d}} P\, d\rho
        = \sum_{i=1}^N w(x_i) r_i^s \, P(y_i).
    \]
    If we define $Y$ as the point set $\{y_1,\dots,y_N\} \subset S^{d-1}$, then from \eqref{eq:spherical-to-gaussian-integral},
    \[
        \int\limits_{\mathclap{S^{d-1}}} P\, d\mu
        =\frac{1}{\sigma_d \cdot \int_0^\infty r^{s + d - 1} e^{-\pi r^2}\, dr} \sum_{i=1}^N w(x_i) \, r_i^s \, P(y_i)
        = \sum_{i=1}^N \hat w(y_i)\, P(y_i)
    \]
    for the weight function
    \[
        \hat w(y_i) = \frac{w(x_i)\, r_i^s}{\sigma_d \cdot \int_0^\infty r^{s + d - 1} e^{-\pi r^2}\, dr}.
    \]
    For a monomial $\hat P$ of degree $2k < s$, the polynomial $\hat P \cdot \lvert x\rvert^{s-2k}$ is homogeneous of degree $s$ and takes the same values as $\hat P$ on $S^{d-1}$, so
    \[
        \int\limits_{\mathclap{S^{d-1}}} \hat P\, d\mu
        = \int\limits_{\mathclap{S^{d-1}}} \hat P(x)\, \lvert x \rvert^{s-2k}\, d\mu(x)
        = \sum_{i=1}^N \hat w(y_i)\, \hat P(y_i)\, |y_i|^{s-2k}
        =\sum_{i=1}^N \hat w(y_i)\, \hat P(y_i).
    \]
    Therefore $(Y,\hat w)$ correctly averages all even-degree monomials. The point set $\hat Y = Y \sqcup (-Y)$ with weight function $\frac{1}{2}(\hat w(y_i) + \hat w(-y_i)\big)$ correctly averages monomials of odd degree as well, so it is a spherical $t$-design with at most $2N$ points.
\end{proof}

Together, \Cref{thm:spherical-to-Gaussian} and \Cref{thm:Gaussian-to-spherical} show that, as $n\to\infty$ for fixed $t$, the growth rates of the smallest weighted spherical and Gaussian $t$-designs are the same.

\subsection{Projecting spherical designs}

We now use these results to ``project'' spherical designs to lower dimensions.

\begin{lemma}\label{thm:Gaussian-projection}
    The orthogonal projection of a weighted Gaussian $t$-design in $\R^d$ onto $\R^k$ is a Gaussian $t$-design in $\R^k$ with the same weights.
\end{lemma}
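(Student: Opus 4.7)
The plan is to exploit two elementary but crucial properties of the Gaussian measure: rotational invariance and product structure. First, I would reduce to the case of a coordinate projection. Because $\rho$ is rotationally invariant and the design property is preserved by applying any orthogonal transformation $T$ to both $X$ and the measure (so $(TX, w)$ is again a weighted Gaussian $t$-design), I may assume without loss of generality that the projection $\pi\colon\R^d\to\R^k$ is the projection onto the first $k$ coordinates.

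Next I would pull polynomials back. Let $(X,w)$ be a weighted Gaussian $t$-design in $\R^d$, and let $f\colon \R^k\to\R$ be an arbitrary polynomial of degree at most $t$. Define the pullback $\tilde f\colon\R^d\to\R$ by $\tilde f(x_1,\dots,x_d)=f(x_1,\dots,x_k)$, which is again a polynomial of degree at most $t$. Applying the design identity to $\tilde f$ gives
\[
    \sum_{x\in X} w(x)\, f(\pi(x))
    = \sum_{x\in X} w(x)\, \tilde f(x)
    = \int\limits_{\mathclap{\R^d}} \tilde f\, d\rho.
\]
Here the left-hand side is exactly the weighted average of $f$ over the multiset $\pi(X)$ with the inherited weights (summing weights when several preimages collapse to the same point).

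The final step is to identify the right-hand side with $\int_{\R^k} f\, d\rho_k$. This is where the product structure of the Gaussian enters: since $d\rho(x) = e^{-\pi|x|^2}\,dx$ factors as $e^{-\pi(x_1^2+\cdots+x_k^2)}\,dx_1\cdots dx_k \cdot e^{-\pi(x_{k+1}^2+\cdots+x_d^2)}\,dx_{k+1}\cdots dx_d$, Fubini's theorem yields
\[
    \int\limits_{\mathclap{\R^d}} \tilde f(x)\, d\rho(x)
    = \int\limits_{\mathclap{\R^k}} f(y)\, d\rho_k(y) \cdot \int\limits_{\mathclap{\R^{d-k}}} 1\, d\rho_{d-k}(z)
    = \int\limits_{\mathclap{\R^k}} f\, d\rho_k,
\]
using that $\rho_{d-k}$ is a probability measure. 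Combining with the previous display shows that $\pi(X)$ with the inherited weights is a Gaussian $t$-design in $\R^k$.

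There is no serious obstacle here — the argument is really just the observation that the Gaussian's two defining symmetries (rotational invariance and coordinate independence) make the design property behave well under linear projection. The only mild care needed is in the weight bookkeeping when $\pi$ identifies distinct points of $X$, and in noting that the argument is agnostic to the sign of the weights, so the same statement and proof apply to signed Gaussian designs.
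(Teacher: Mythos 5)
Your proof is correct and follows essentially the same route as the paper's: pull back a degree-$\leq t$ polynomial on $\R^k$ along the coordinate projection, apply the design identity in $\R^d$, and use the product structure of the Gaussian to collapse the extra coordinates. The additional remarks on reducing general orthogonal projections to coordinate ones via rotational invariance and on weight bookkeeping are fine but not needed beyond what the paper does.
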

\begin{proof}
    Let $X$ be a Gaussian $t$-design in $\R^d$ and $\pi\colon \R^d \to \R^k$ be the orthogonal projection that deletes the last $d-k$ coordinates of a point. For any polynomial $P$ in $\R^k$, let $\tilde P$ be its extension to $\R^d$ given by $\tilde P(x) = P\big( \pi (x)\big)$. Then
    \[
        \sum_{x \in X} w(x)\, P\big( \pi (x)\big)
        = \sum_{x \in X} w(x)\, \tilde P(x)
        = \int\limits_{\mathclap{\R^d}} \tilde P\, d\rho
    \]
    by the fact that $X$ is a design, and using the fact that $\int_{-\infty}^\infty e^{-\pi |x|^2}\,dx = 1$, we have
    \[
        \int\limits_{\mathclap{\R^d}} \tilde P\, d\rho
        = \Big(\int\limits_{\mathclap{\R^k}} P(x_1,\dots,x_k)\, d\rho\Big)\, \Big( \int\limits_{\ \mathclap{\R^{d-k}}} e^{-\pi(x_{k+1}^2 + \cdots + x_d^2)}\, d\rho \Big)
        = \int\limits_{\mathclap{\R^k}} P\, d\rho.
    \]
    So $\big(\pi(X), \pi(w)\big)$ is a Gaussian $t$-design in $\R^k$.
\end{proof}

\begin{corollary}\label{thm:spherical-design-projection}
    If there is a weighted spherical $t$-design in $\R^d$ with $N$ points, then there is a weighted spherical $t$-design in $\R^k$, for each $k \leq d$, with at most $2(t+1)N$ points.
\end{corollary}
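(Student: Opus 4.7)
The plan is to chain the three results that were just established. Starting with a weighted spherical $t$-design on $S^{d-1}$ with $N$ points, I would first apply \Cref{thm:spherical-to-Gaussian} to produce a weighted Gaussian $t$-design in $\R^d$ with at most $(t+1)N$ points. Next I would apply \Cref{thm:Gaussian-projection} with the orthogonal projection $\pi\colon \R^d \to \R^k$ (deleting the last $d-k$ coordinates) to obtain a weighted Gaussian $t$-design in $\R^k$, still with at most $(t+1)N$ points. Finally, I would apply \Cref{thm:Gaussian-to-spherical} in dimension $k$ to convert this Gaussian design back to a weighted spherical $t$-design on $S^{k-1}$, at the cost of a factor of $2$, yielding at most $2(t+1)N$ points.

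The whole argument is essentially a diagram chase: each arrow has already been established, so the corollary is immediate once one recognizes that the natural Gaussian avatar of a spherical design is well-behaved under coordinate projection (whereas spherical designs themselves are not). The multiplicative constants multiply cleanly to $2(t+1)$, matching the claim.

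The only mild subtlety I anticipate is that the projection step in \Cref{thm:Gaussian-projection} could in principle send some point of the Gaussian design to the origin, which would then have no image under the radial normalization $y_i = x_i/|x_i|$ used in \Cref{thm:Gaussian-to-spherical}. This, however, is not a genuine obstacle: any such point contributes only a constant to the Gaussian averaging identity in $\R^k$, so it can either be absorbed by a generic small rotation of the target subspace before projecting (so that no point projects exactly to $0$) or handled directly by noting that the origin contributes equally to both sides of the Gaussian identity for any monomial of positive degree and only shifts the weight of the constant function, which can be compensated for by adjusting the weights on the remaining points. Either workaround preserves the bound of $2(t+1)N$ on the number of points.
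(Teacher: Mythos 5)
Your proof is correct and is exactly the paper's argument: convert to a Gaussian design via \Cref{thm:spherical-to-Gaussian}, project with \Cref{thm:Gaussian-projection}, and convert back with \Cref{thm:Gaussian-to-spherical}, with the constants multiplying to $2(t+1)$. Your extra care about a point projecting to the origin is a legitimate edge case the paper's one-line proof silently skips, and either of your workarounds handles it.
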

\begin{proof}
    Use \Cref{thm:spherical-to-Gaussian} to convert to a Gaussian design with $(t+1)N$ points; project the design to $\R^k$ using \Cref{thm:Gaussian-projection}; then convert back to a spherical design with $2(t+1)N$ points using \Cref{thm:Gaussian-to-spherical}.
\end{proof}

\Cref{thm:spherical-design-projection} combined with the Kerdock construction of 4-designs in \Cref{sec:history} shows that for \emph{every} dimension $d$, there is a weighted spherical 4-design in $S^{d-1}$ with at most $2 \cdot 5 \cdot (4d)(4d+2) < 160 d(d+1)$ points. (Simply take a 4-design in a dimension larger than $d$ which is a power of $4$ and project that to a spherical design in $\R^d$.)

\section{Smaller unweighted Gaussian designs}\label{sec:smaller-weighted-designs}

The aim of this section is to prove \Cref{thm:unweighted-gaussian-design,thm:t-wise-ind} and deduce \Cref{thm:smaller-sph-designs} from them. We'll start by formally defining $t$-wise independent sets and proving \Cref{thm:t-wise-ind}.

\begin{definition}\label{def:k-wise-independent}
    Let $A$ be a finite set and $X$ a multiset of vectors in $A^d$. For each $I \subseteq [d]$, let $X_I$ be the random variable obtained by choosing a uniform random vector in $X$ and restricting to the coordinates in $I$. If the distribution of $X_I$ is uniform on $A^{\lvert I \rvert}$ for every $I \subseteq [r]$ with $\lvert I \rvert \leq k$, then $X$ is called \emph{$k$-wise independent}.
\end{definition}

The idea for using $t$-wise independent sets to construct an unweighted Gaussian design comes from the fact that the Gaussian is a product measure: If a point set in $\R^d$ is $t$-wise independent, and the distribution along each coordinate is itself a 1-dimensional Gaussian $t$-design, then show that point set is a Gaussian $t$-design in $\R^d$.

So our goal will be to construct small $t$-wise independent sets. To do this, we first connect $t$-wise independence of sets to $t$-wise independence of vectors in $\F_q^n$. Then, we construct a set of $t$-wise independent vectors using the probabilistic method.

\begin{lemma}\label{thm:t-wise-linear-to-set}
    For each vector $x \in \F_q^r$, define the vector $\phi_x \in (\F_q)^{\F_q^r}$ by $\phi_x(y) = \langle x,y\rangle$. If $S \subseteq \F_q^r$ has the property that any collection of $t$ elements of $S$ is linearly independent, then the uniform distribution on $\{\!\{ \phi_x|_S : x \in \F_q^r\}\!\}$ is $t$-wise independent.
\end{lemma}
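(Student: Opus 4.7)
The plan is a direct linear-algebra argument. Fix any index set $I = \{i_1, \dots, i_k\} \subseteq [|S|]$ with $k \leq t$, corresponding to points $s_{i_1}, \dots, s_{i_k} \in S$. I would write the "project then evaluate" map $\Psi_I \colon \F_q^r \to \F_q^k$ sending $x \mapsto \big(\langle x, s_{i_1}\rangle, \dots, \langle x, s_{i_k}\rangle\big)$, which is exactly the restriction of $\phi_x$ to the $I$-coordinates of $S$. The key observation is that $\Psi_I$ is $\F_q$-linear, and its matrix (with respect to the standard basis) has rows $s_{i_1}, \dots, s_{i_k}$.

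Next, I would invoke the hypothesis on $S$: since any $t$ elements of $S$ are linearly independent and $k \leq t$, the rows $s_{i_1},\dots,s_{i_k}$ are linearly independent. Hence $\Psi_I$ has rank $k$ and is surjective onto $\F_q^k$. A surjective linear map between finite-dimensional $\F_q$-vector spaces has all fibers of the same size, namely $q^{r-k}$, so every element of $\F_q^k$ is attained by exactly $q^{r-k}$ vectors $x \in \F_q^r$.

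Translating back to the multiset $\{\!\{\phi_x|_S : x \in \F_q^r\}\!\}$: drawing a uniform element means drawing $x$ uniformly from $\F_q^r$, and its restriction to the $I$-coordinates equals $\Psi_I(x)$. By the fiber-count above, $\Psi_I(x)$ is uniform on $\F_q^k$, which is precisely the $t$-wise independence condition in \Cref{def:k-wise-independent}. Since $I$ was arbitrary, the proof is complete.

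There is really no main obstacle here; the lemma is essentially a repackaging of the fact that a rank-$k$ $\F_q$-linear map has uniform fibers. The only subtle bookkeeping is to make sure that the multiset viewpoint (as opposed to a set) is what produces uniformity — distinct $x$'s may yield the same $\phi_x|_S$ (precisely when $x - x'$ lies in $S^\perp$), and we must count them with multiplicity, which is exactly what the fiber calculation does.
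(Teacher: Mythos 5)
Your argument is correct and is essentially identical to the paper's proof: both identify the restriction to a $k$-element subset of $S$ with a linear map whose rows are those elements, use the linear-independence hypothesis to conclude surjectivity, and deduce uniformity from the equal fiber sizes $q^{r-k}$. Your closing remark about counting with multiplicity is a fair (and correct) clarification of a point the paper leaves implicit.
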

\begin{proof}
    Checking that the multiset is $t$-wise independent corresponds to fixing any $t$ elements of $y_1,\dots,y_t\in S$ and examining the restriction $\phi_x|_S$, which is the map $\psi\colon \F_q^r \to \F_q^t$ given by $\psi(x)_i = \langle x,y_i\rangle$. Since $y_1,\dots,y_t$ are linearly independent and $\psi$ is represented by a matrix whose $i$th row is $y_i$, we know that $\rank \psi = t$. So $\psi$ is surjective, and every element of $\F_q^t$ has a preimage of size $q^{r-t}$. In other words, the uniform distribution on $Y = \{y_i\}_{i=1}^t$ yields the uniform distribution on $\{\!\{ \phi_x|_Y : x \in \F_q^r\}\!\}$. As this holds for any subset $Y \subseteq S$ of size $t$, the uniform distribution on $\{\!\{ \phi_x|_S : x \in \F_q^r\}\!\}$ is $t$-wise independent.
\end{proof}

The set $\{\!\{\phi_x|_S : x \in \F_q^r\}\!\}$ has $q^r$ vectors, each with $d = |S|$ coordinates. So to find a small $t$-wise independent set relative to the number of coordinates, we want to maximize the size of $S$.

\begin{lemma}\label{thm:t-wise-linearly-indep-set}
    The vector space $\F_q^r$ contains a set of size $\frac{1}{8} q^{r/(t-1)-1}$ such that any linearly dependent subset has size at least $t+1$.
\end{lemma}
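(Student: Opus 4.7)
The plan is to establish this lemma by the probabilistic alteration method. Set $n = \frac{1}{4} q^{r/(t-1) - 1}$ and draw $v_1, \ldots, v_n$ independently and uniformly at random from $\F_q^r$; let $X$ be the number of $t$-element subsets $I \subseteq [n]$ for which $\{v_i : i \in I\}$ is linearly dependent. Once we show $\E[X] < n/2$, averaging produces a realization with $X < n/2$. Deleting one $v_i$ from each dependent $t$-tuple then leaves a multiset of at least $n/2$ vectors in which every $t$ of them are linearly independent. Two coinciding vectors $v_i = v_j$ would form a dependent $t$-tuple together with any other $t-2$ surviving indices, so all such collisions are destroyed by the deletion. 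What remains is a genuine set $S \subseteq \F_q^r$ of size at least $n/2 = \frac{1}{8} q^{r/(t-1)-1}$ in which every linearly dependent subset has size at least $t+1$.

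The heart of the argument is a standard probability estimate. If $v_1, \ldots, v_t$ are i.i.d.\ uniform on $\F_q^r$, then a union bound over which vector lies in the span of the rest gives
\[
    \Pr\!\bigl[v_1, \ldots, v_t \text{ lin.\ dep.}\bigr]
    \;\leq\; \sum_{i=1}^{t} \Pr\!\bigl[v_i \in \vspan\{v_j : j \neq i\}\bigr]
    \;\leq\; t \cdot q^{t-1-r}.
\]
Combined with $\binom{n}{t} \leq n^t/t!$ this yields
\[
    \E[X] \;\leq\; \frac{n^t}{(t-1)!}\, q^{t-1-r}.
\]
A direct substitution shows $n^{t-1}\, q^{t-1-r} = 4^{-(t-1)}$, so $\E[X] \leq \frac{n}{4^{t-1}(t-1)!} \leq n/4 < n/2$, which completes the chain.

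No serious obstacle is anticipated: this is essentially a Gilbert--Varshamov-type construction, since the vectors in $S$ are the columns of a parity-check matrix for a $q$-ary linear code of minimum distance at least $t+1$. The only mildly delicate point is confirming that the alteration step also kills all coincidences, so that the resulting multiset is actually a set; as noted above, this is automatic because any repetition lives inside many dependent $t$-tuples. A more refined probability estimate via the exact formula $\prod_{i=0}^{t-1}(1 - q^{i-r})$ would tighten the constant, but the crude union bound already suffices for the factor of $1/8$ in the statement.
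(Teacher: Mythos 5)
Your argument is correct and is essentially the same probabilistic deletion argument as the paper's: the paper samples a Bernoulli random subset (each vector kept with probability $\alpha$), bounds the expected number of dependent $t$-subsets via the $(Y,v)$ decomposition, and deletes, arriving at the same $\tfrac18 q^{r/(t-1)-1}$ bound. Your fixed-size i.i.d.\ sampling with the union bound over which vector lies in the span of the others is an equivalent variant, and your handling of repeated vectors correctly addresses the one wrinkle your sampling model introduces.
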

\begin{proof}
    We will prove the existence of this set probabilistically. Every linearly dependent subset of size $t$ can be written in the form $(Y,v)$, where $Y$ has $t-1$ vectors and $v \in \operatorname{span}(Y)$, so the number of linearly dependent subsets of size $t$ in $\F_q^r$ is at most $\binom{q^r}{t-1} q^{t-1}\leq q^{(r+1)(t-1)}$. Create a set $T$ by including each vector in $\F_q^r$ independently with probability $\alpha$. Then $\E\big[|T|\big] = \alpha q^r$ and the expected number of linearly dependent subsets of size $t$ in $T$ is at most $q^{(r+1)(t-1)} \alpha^t$. Delete all the vectors from each linear dependence of size $t$ to get a set $S$ with no such linear dependence and size $\E\big[|S|\big] \geq q^r \alpha - t q^{(r+1)(t-1)} \alpha^t$. Taking $\alpha = (\frac{1}{2t})^{1/(t-1)} q^{-(r+1) + r/(t-1)}$ yields $\E\big[|S|\big] \geq \frac{1}{8} q^{r/(t-1)-1}$.
\end{proof}

\begin{proof}[Proof of \Cref{thm:t-wise-ind}]
    Together, \Cref{thm:t-wise-linearly-indep-set,thm:t-wise-linear-to-set} construct a set of $m=q^r$ vectors with $d = \frac{1}{8q} m^{1/(t-1)}$ coordinates that is $t$-wise independent. Rearranging this equation, we see that this is a set of $m = (8qd)^{t-1}$ vectors lying in $\{1,2,\dots,q\}^d$ that are $t$-wise independent.
\end{proof}

We can now prove \Cref{thm:unweighted-gaussian-design} from \Cref{thm:t-wise-ind}.

\begin{proof}[Proof of \Cref{thm:unweighted-gaussian-design}]
    By \Cref{thm:moment-approximation-unweighted}, for some prime-power $q \in \N$, there is an unweighted Gaussian $t$-design $A = \{a_1,\dots,a_q\} \subset \R^1$. \Cref{thm:t-wise-ind} produces a set $X \subseteq A^d$ of at most $(8q d)^{t-1}$ vectors in $\R^d$ whose coordinates are $t$-wise independent. Since $A$ is a Gaussian $t$-design, the first $t$ moments of $A$ are the first $t$ moments of the Gaussian measure. So take any monomial $x_{i_1}^{\alpha_1} x_{i_2}^{\alpha_2} \cdots x_{i_r}^{\alpha_r}$ of degree at most $t$. Because $r \leq t$, the $t$-wise independence allows averages to distribute over the product, so
    \[
        \mathop{\E}_{x \in X} [x_{i_1}^{\alpha_1}\cdots x_{i_r}^{\alpha_r}]
        = \prod_{j=1}^r \mathop{\E}_{x\in X} [x_{i_j}^{\alpha_j}]
        = \prod_{j=1}^r \, \int\limits_{\R} x_{i_j}^{\alpha_j} e^{-\pi x_{i_j}^2}\, dx_{i_j}
        = \int\limits_{\mathclap{\R^d}} x_{i_1}^{\alpha_1}\cdots x_{i_r}^{\alpha_r}\, e^{-\pi |x|^2}\, dx\qedhere
    \]
\end{proof}

In fact, the set $X$ in the previous proof correctly averages every monomial $x_{i_1}^{\alpha_1} x_{i_2}^{\alpha_2} \cdots x_{i_t}^{\alpha_t}$ with at most $t$ variables as long as $\alpha_i \leq t$ for each $t$, which is a stronger condition than being a $t$-design. In any case, by applying \Cref{thm:Gaussian-to-spherical} to \Cref{thm:unweighted-gaussian-design}, we get a spherical $t$-design, as well.

\begin{corollary}
    There is a weighted spherical $t$-design in $\R^d$ with $O_t(d^{t-1})$ points.
\end{corollary}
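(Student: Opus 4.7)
The plan is to directly compose the two results that immediately precede this corollary: the construction of a small unweighted Gaussian $t$-design from \Cref{thm:unweighted-gaussian-design}, and the Gaussian-to-spherical transfer from \Cref{thm:Gaussian-to-spherical}. Because an unweighted design is in particular a weighted design (assigning each point the weight $1/|X|$), this composition should produce a weighted spherical $t$-design of essentially the same size.

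Concretely, first invoke \Cref{thm:unweighted-gaussian-design} to obtain an unweighted Gaussian $t$-design $X \subseteq \R^d$ with $|X| \leq C_t\, d^{t-1}$ points for some constant $C_t$ depending only on $t$. Reinterpret $X$ as a weighted Gaussian $t$-design by assigning each point the uniform weight $w(x) = 1/|X|$; the defining identity \eqref{eq:weighted-design-condition} holds trivially in this case. Then apply \Cref{thm:Gaussian-to-spherical}, which converts any weighted Gaussian $t$-design with $N$ points into a weighted spherical $t$-design with at most $2N$ points. The resulting design has at most $2 C_t\, d^{t-1} = O_t(d^{t-1})$ points, which is exactly the desired bound.

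The main obstacle, if one can call it that, is purely bookkeeping: one needs only to verify that the constant absorbed into $O_t(\cdot)$ does not blow up with $d$. Since \Cref{thm:Gaussian-to-spherical} contributes a multiplicative factor of $2$ (independent of $d$) and \Cref{thm:unweighted-gaussian-design} already gives the $O_t(d^{t-1})$ growth rate, the composition preserves the asymptotic order in $d$ with a constant that depends only on $t$. No additional work is required.
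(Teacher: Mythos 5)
Your proposal is correct and matches the paper's argument exactly: the paper obtains this corollary by applying \Cref{thm:Gaussian-to-spherical} directly to the unweighted Gaussian $t$-design of \Cref{thm:unweighted-gaussian-design}, incurring only the factor-of-$2$ loss from the conversion. Nothing further is needed.
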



\section{Optimal signed designs}\label{sec:signed-designs}

In this section, we prove \Cref{thm:optimal-signed-designs} by constructing signed Gaussian and spherical designs whose sizes are within a multiplicative constant of the lower bound in \Cref{thm:delsarte-lower-bound}.

If $\alpha_i$ is odd, then $\int_{S^{d-1}} x^{\alpha}\,d\mu(x) = 0$, so any point set that is symmetric with respect to the coordinate negation $x_i \mapsto -x_i$ correctly averages the monomial $x^\alpha$. The idea of this section is to divide the monomials of degree at most $2t$ into two groups, those that have even degree in each variable and those that do not, and address the two groups in different ways.

The simplest approach using this idea would be to start with a point set $X$ that correctly averages all monomials $x^\alpha$ with $\alpha \in (2\N_0)^d$ and $\lvert \alpha\rvert \leq 2t$. Since there are $O(d^t)$ such polynomials, a slight modification of \Cref{thm:weighted-design-upper-bound} produces an averaging set for these monomials with $O(d^t)$ points. Then, we can take various coordinate negations of this set to make all the remaining monomials average to 0, as they do over $S^{d-1}$.

Given $\epsilon \in \{\pm 1\}^d$, let $\eta_\epsilon$ be the linear transformation defined by $x_i \mapsto \epsilon_i x_i$. The set $\bigcup_{\epsilon \in \{\pm 1\}^d} \eta_{\epsilon}(X)$ is certainly symmetric across each coordinate and therefore creates a $2t$-design, but one with $2^d |X|$ points, which is enormous. The idea would be to reduce the number of coordinate negations needed to create a coordinate-symmetric set. 

If every monomial $x^\alpha$ with $\lvert \alpha\rvert \leq 2t$ and $\alpha \notin (2\N_0)^d$ averages to 0 across $\eta_{\epsilon_1}(X) \cup \cdots \cup \eta_{\epsilon_m}(X)$ for every point set $X$, then for any $1 \leq r \leq 2t$ distinct values $i_1,\dots,i_r \in [m]$,
\begin{equation}\label{eq:mean-zero-reflections}
    \E_j\big[ \epsilon_j(i_1) \epsilon_j(i_2)\cdots \epsilon_j(i_r) \big] = 0.
\end{equation}
(The indices $i_1,\dots,i_r$ correspond to the coordinates in $\alpha$ that are odd.) Similarly, if \eqref{eq:mean-zero-reflections} is satisfied, then $\eta_{\epsilon_1}(X) \cup \cdots \cup \eta_{\epsilon_m}(X)$ is a $2t$-design, as long as $X$ correctly averages the monomials $x^{\alpha}$ with $\alpha \in (2\N_0)^d$. As it turns out, satisfying \eqref{eq:mean-zero-reflections} requires at least $\binom{d}{t}$ reflections, even if the reflections can be weighted according to a probability distribution.

\begin{proposition}[Sauermann \cite{sauermann-communication}]\label{thm:Sauermann}
    If $\epsilon_1,\dots,\epsilon_m \in \{\pm 1\}^d$ satisfy condition \eqref{eq:mean-zero-reflections} according to a probability distribution $\nu$ on $\{\epsilon_i\}_{i=1}^m$, then $m \geq \binom{d}{t}$.
\end{proposition}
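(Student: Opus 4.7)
The plan is to recast hypothesis \eqref{eq:mean-zero-reflections} as an orthogonality condition for Walsh characters with respect to $\nu$ and then run a linear-algebraic dimension argument in the same spirit as \Cref{thm:weighted-design-lower-bound}. For each $S \subseteq [d]$, define the Walsh character $\chi_S \colon \{\pm 1\}^d \to \{\pm 1\}$ by $\chi_S(\epsilon) = \prod_{i \in S} \epsilon(i)$ (so $\chi_\emptyset \equiv 1$). The key identity is $\chi_S \cdot \chi_T = \chi_{S \triangle T}$, where $\triangle$ denotes symmetric difference. Whenever $|S|, |T| \leq t$, we have $|S \triangle T| \leq |S| + |T| \leq 2t$, and $|S \triangle T| = 0$ iff $S = T$, so hypothesis \eqref{eq:mean-zero-reflections} (with the ``$[m]$'' read as $[d]$, matching the way the indices parametrize the coordinates of $\alpha$) translates directly to
\[
    \langle \chi_S, \chi_T\rangle_\nu
    := \sum_{j=1}^m \nu(\epsilon_j)\, \chi_S(\epsilon_j)\, \chi_T(\epsilon_j)
    = \E_\nu[\chi_{S \triangle T}]
    = \delta_{S,T}.
\]
In other words, $\{\chi_S : |S| \leq t\}$ forms an orthonormal family under $\langle \cdot, \cdot\rangle_\nu$.

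Next, I would let $V = \vspan\{\chi_S : |S| \leq t\}$. Orthonormality immediately forces these characters to be linearly independent, so $\dim V = \sum_{k=0}^t \binom{d}{k}$. Consider the evaluation map $\phi \colon V \to \R^m$ given by $\phi(f) = \big(f(\epsilon_1), \ldots, f(\epsilon_m)\big)$. If $\phi(f) = 0$, then $\langle f, f\rangle_\nu = \sum_{j} \nu(\epsilon_j)\, f(\epsilon_j)^2 = 0$; writing $f = \sum_{|S| \leq t} c_S \chi_S$ and expanding by orthonormality yields $\sum_S c_S^2 = 0$, forcing $f = 0$. Thus $\phi$ is injective, which gives
\[
    m \geq \dim V = \sum_{k=0}^t \binom{d}{k} \geq \binom{d}{t},
\]
as claimed (and a little more).

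I don't see a genuine obstacle in this plan. The identity $\chi_S \chi_T = \chi_{S \triangle T}$ is immediate, and the injective-evaluation step is a direct analogue of \Cref{thm:weighted-design-lower-bound}, with the discrete cube $\{\pm 1\}^d$ replacing the sphere and Walsh characters replacing polynomial functions. The only minor thing to be careful about is that the hypothesis \eqref{eq:mean-zero-reflections} really covers all nonempty subsets of size up to $2t$, so that $\E_\nu[\chi_{S \triangle T}] = 0$ for every distinct pair $S, T$ with $|S|, |T| \leq t$; this follows because the range $1 \leq |S \triangle T| \leq 2t$ is precisely what the hypothesis promises.
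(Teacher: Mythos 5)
Your argument is correct and is essentially the paper's: both proofs rest on the observation that \eqref{eq:mean-zero-reflections} makes the characters $\chi_S$ orthonormal in $L^2(\nu)$, a space of dimension at most $m$ --- the paper packages this as the rank-one decomposition $I = \E_j[M_j]$ of the Gram matrix indexed by $\binom{[d]}{t}$, while you phrase the same fact as injectivity of the evaluation map into $\R^m$. The only difference is that by including all $S$ with $|S|\le t$ rather than only $|S|=t$, you obtain the marginally stronger bound $m \ge \sum_{k=0}^{t}\binom{d}{k}$.
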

\begin{proof}
    We define an $\binom{[d]}{t}\times \binom{[d]}{t}$ matrix $M$ by
    \begin{equation}\label{eq:sauermann-lemma}
        M_{S,T}
        = \E_{j\sim \nu} \Big[ \prod_{i\in S} \epsilon_j(i) \prod_{i\in T}\epsilon_j(i) \Big]
        =  \E_{j\sim \nu} \Big[ \prod_{i\in S \triangle T} \epsilon_j(i) \Big],
    \end{equation}
    where $\triangle$ denotes the symmetric difference. If $S \neq T$, then condition \eqref{eq:mean-zero-reflections} implies that $M_{S,T} = 0$; if $S=T$, then $M_{S,S} = 1$. So $M$ is the identity matrix, and $\rank(M) = \binom{d}{t}$. On the other hand, each matrix $M_j$ defined by $(M_j)_{S,T} := \prod_{i\in S} \epsilon_j(i) \prod_{i\in T}\epsilon_j(i)$ has rank 1, so $M = \E_j [M_j]$ has rank at most $m$. We conclude that $\binom{d}{t} = \rank(M) \leq m$.
\end{proof}

Thus, any design produced by this method has at least $\Omega_t(d^t |X|) = \Omega_t(d^{2t})$ points, which is no improvement on the existing upper bound at all.


To overcome this problem, we will choose $X$ more judiciously. If each point in $X$ is zero in many coordinates, then it has few images under coordinate negations, which means that the set generated from $X$ that is symmetric across all coordinates may be much smaller than in general.

The next proof uses this idea by starting with a family of coordinate-symmetric sets that each correctly average the monomials $\{ x^{\alpha} : \alpha \notin (2\N_0)^d\}$ and taking a weighted union of them to correctly average the remaining monomials.

\begin{proof}[Proof of \Cref{thm:optimal-signed-designs}]
     Before diving into the proof, here is a preview of what's to come. We'll start with a family of symmetric point sets $Y_t(a)$ parametrized by points $a \in \R^t$, and the signed $2t$-design will be formed as a weighted union of several different $Y_t(a)$'s. To find a good weighted union, we will transform the problem into a linear-algebraic one about the moment vectors of the Gaussian measure and the $Y_t(a)$, and then show that the Gaussian moment vector is in the span of the moment vectors of the $Y_t(a)$. The symmetry of the $Y_t(a)$ allows the argument to take place in a low-dimensional vector space, which results in a design with few points.
     
    Now to the specifics. We start by defining the $Y_t(a)$. Given $a \in \R^{t}$, we consider the set of images of $a_1e_1 + \cdots a_{t} e_{t} \in \R^d$ under coordinate permutation and negation, including multiplicity:
    \[
        \big\{\!\big\{ \sum_{i=1}^{t} \epsilon_i a_i e_{\sigma(i)} : \epsilon \in \{\pm 1\}^{t} \text{ and } \sigma \in S_d \big\}\!\big\}.
    \]
    This multiset has $2^t d!$ elements, and we define $Y_t(a)$ as the multiset obtained from this one by dividing the multiplicity of each element by $(d-t)!$. (All multiplicities in $Y_t(a)$ are integers because each element of the original multiset has at least $d-t$ zeros.) So $\lvert Y_t(a)\rvert < 2^t d^t$.
    
    Our goal is to find a weighted union of the $Y_t(a)$ that is a signed Gaussian $2t$-design: that is, a set $A \subset \R^t$ and a function $w\colon A \to \R$ such that
    \begin{equation}\label{eq:gluing-signed-design-condition}
        \sum_{a \in A} w(a)\ \sum_{\mathclap{y \in Y_t(a)}} y^\alpha
        = \int\limits_{\mathclap{\R^d}} y^\alpha\, d\rho
    \end{equation}
    for every $\alpha \in \N_0^d$ with $|\alpha|\leq t$. 
    
    To reformulate the problem linear-algebraically, we denote the moments of the $Y_t(a)$ and the Gaussian measure by
    \[
        b_\alpha(a) := \sum_{y \in Y_t(a)} y^{\alpha}
        \qquad \text{and} \qquad
        m_\alpha = \int_{\R^d} x^\alpha\, d\rho
    \]
    With these notations, \eqref{eq:gluing-signed-design-condition} says that there is a $w\colon A \to \R$ such that $m_\alpha = \sum_{a \in A} w(a) b_{\alpha}(a)$ for every $\alpha \in (\N_0)^d$ such that $|\alpha| \leq t$.

    For some $\alpha$, these conditions are always satisfied: If some coordinate of $\alpha$ is odd, then $b_\alpha(a) = 0$ for every $a \in \R^t$, because $Y_t(a)$ is symmetric under coordinate negations; and in this case, $m_\alpha = 0$ as well. Moreover, $b_\alpha(a)$ is invariant under permutations:
    \[
        b_{\sigma \cdot \alpha}(a)
        = \sum_{y \in Y_t(a)} y^{\sigma\cdot \alpha}
        =\sum_{y \in Y_t(a)} (\sigma^{-1}\cdot y)^{\alpha}
        = b_{\alpha}(\sigma^{-1} \cdot a)
        = b_{\alpha}(a).
    \]
    In short, then, we we can restrict our attention to one $\alpha$ from each orbit of $S_d$ among those $\alpha \in (2\N_0)^d$ with $|\alpha|\leq t$. 
    
    One such set of representatives is $\{2\alpha : \alpha \in P\}$, where $P$ is the collection of partitions of integers $\leq t$ into at most $d$ parts. (A \emph{partition} of $t$ is a set of positive integers that sum to $t$.) If we define
    \[
        m := (m_{2\alpha})_{\alpha \in P}
        \qquad \text{and} \qquad
        b(a) := \big(b_{2\alpha}(a)\big)_{\alpha \in P},
    \]
    both in $\R^P$, then \eqref{eq:gluing-signed-design-condition} is equivalent to the statement that $m \in \operatorname{span}\!\big(b(a) : a \in \R^t\big)$.

    Actually, we now prove the stronger statement that $\big\{ b(a) : a \in \R^{t}\big\}$ spans $\R^P$, by contradiction: We will first show (unconditionally) that the polynomials $b_{2\alpha}$ are linearly independent, and then that if they do not span $\R^P$, they are linearly dependent---a contradiction.
    
    Considered as a map $\R^t \to \R$, the function $b_{2\alpha}(a)$ is a polynomial in $t$ variables in which the degree sequences of the monomials are the permutations of $2\alpha$. Thus, if $\lvert \alpha\rvert = \lvert \beta\rvert$ but $\alpha \neq \beta$, the monomials in $b_{2\alpha}$ do not appear in $b_{2\beta}$, which shows independence.
    
    Now, for the purpose of a contradiction, suppose that
    \[
        \dim\!\Big(\! \Span\! \big( b(a) : a \in \R^{t} \big) \Big) < |P|.
    \]
    Thus $\big\{ b(a) : a \in \R^{t}\big\}$ lies in a $(|P|-1)$-dimensional subspace, so there is a nonzero vector $c \in \R^P$ which is orthogonal to all $b(a)$. Explicitly,
    \[
        \sum_{\alpha \in P} c_\alpha\, b_{2\alpha}(a) = 0
    \]
    for every $a \in \R^t$. But this can only be the case if $\sum_{\alpha \in P} c_\alpha b_\alpha$ is the zero polynomial, which is impossible because the polynomials $\{b_{2\alpha}\}_{\alpha \in P}$ are linearly independent.

    Therefore, $m = \sum_{a \in A} w(a) b(a)$ for some $A \subset \R^t$ of size at most $|P|$ and $w\colon A \to \R$. The set $X = \cup_{a \in A} Y_t(a)$ with the weight function $\hat w(y) = w(a)$ whenever $y \in Y_t(a)$ is a signed Gaussian $2t$-design. It has at most $|P|\cdot |Y_t(a)| \leq p_t 2^t d^t = O_t(d^t)$ points.
\end{proof}

The constant $p_t$ in the proof is fairly reasonable. A simple upper bound by the number of \emph{compositions} (sequences of positive integers that sum to $t$) shows that $p_t \leq 2^{t-1}$. Actually, $p_t$ is much smaller; Hardy and Ramanujan showed that $p_t = O( e^{\pi \sqrt{2t/3}})$ \cite{hardy-ramanujan}.

By applying \Cref{thm:Gaussian-to-spherical}, we get the corresponding result for spherical designs:

\begin{corollary}
    There is a signed spherical $2t$-design in $\R^d$ with $O_t(d^t)$ points.
\end{corollary}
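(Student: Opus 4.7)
The corollary is an immediate application of the transfer principle \Cref{thm:Gaussian-to-spherical} to the signed Gaussian $2t$-design just produced. The plan is as follows. First, take the signed Gaussian $2t$-design $(X,w)$ with $|X| = O_t(d^t)$ points from the preceding theorem. Then feed it through the construction in the proof of \Cref{thm:Gaussian-to-spherical}: radially project each $x_i \in X$ to $y_i := x_i/|x_i|$, assign the weight $\hat w(y_i) = w(x_i)\,|x_i|^{s}/C_s$ where $s = 2t$ and $C_s$ is the normalizing integral appearing in that proof, and finally antipodally symmetrize to $Y \sqcup (-Y)$ with weights $\tfrac{1}{2}\big(\hat w(y) + \hat w(-y)\big)$.

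The only substantive thing to verify is that \Cref{thm:Gaussian-to-spherical}, although stated for positively-weighted designs, applies verbatim to signed designs. Inspecting that proof, every identity is linear in the weight function $w$, and the symmetrization step is a coordinate-wise averaging that preserves signs; positivity of $w$ is never invoked. Hence the transfer produces a signed spherical design that correctly averages all monomials of degree at most $2t$: the even-degree ones by the radial projection calculation, and the odd-degree ones because the symmetrized point set is antipodal.

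A small technical nuisance to dispatch is the possibility that some $x_i$ could be the origin, in which case $y_i$ is undefined. This can be handled by discarding such points (they contribute $0$ to every nonconstant monomial), or equivalently by observing that the multisets $Y_t(a)$ in the proof of the preceding theorem contain $0$ only when $a$ itself is the origin, a degenerate case one can exclude from $A$. I do not anticipate any real obstacle; essentially all the content lives in the signed Gaussian theorem, and converting to the sphere at most doubles the point count, giving $|Y \sqcup (-Y)| \leq 2|X| = O_t(d^t)$ and matching the Delsarte--Goethals--Seidel lower bound up to a $t$-dependent constant.
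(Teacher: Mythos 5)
Your proof is correct and is essentially the paper's own argument: the paper obtains this corollary by applying \Cref{thm:Gaussian-to-spherical} directly to the signed Gaussian design of \Cref{thm:optimal-signed-designs}. Your additional checks---that the conversion is linear in the weights and never uses their positivity, and that any points at the origin can be discarded harmlessly---are sound and simply make explicit what the paper leaves implicit.
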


In fact, this approach proves something notably stronger. Let $\mathcal P^{o}_{2t}$ denote the span of the monomials $x^\alpha$ in $\R^d$ in which either $|\alpha| \leq 2t$ or some component of $\alpha$ is odd. The design constructed in \Cref{thm:optimal-signed-designs} averages to 0 on \emph{any} monomial with an odd degree component, not just those with total degree $2t$. Therefore, the same proof actually shows that:

\begin{theorem}\label{thm:signed-all-odd}
    There are signed spherical and Gaussian $\mathcal P_{2t}^o$-designs in $\R^d$ with $O(d^t)$ points.
\end{theorem}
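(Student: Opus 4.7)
The plan is to invoke essentially the same construction used in the proof of \Cref{thm:optimal-signed-designs} and observe that it automatically gives the stronger conclusion. Recall that that proof built a signed design as a weighted union $X = \bigcup_{a \in A} Y_t(a)$, where each $Y_t(a)$ consists of the images of $a_1 e_1 + \cdots + a_t e_t$ under all coordinate permutations and sign flips. The key structural feature I would highlight is that $Y_t(a)$ is symmetric under every individual coordinate negation $x_i \mapsto -x_i$. Consequently, $\sum_{y \in Y_t(a)} y^\alpha = 0$ whenever \emph{any} coordinate $\alpha_i$ is odd, with no constraint whatsoever on the total degree $|\alpha|$. So any weighted union of the $Y_t(a)$ vanishes against every monomial having an odd-exponent coordinate, matching the corresponding integrals $\int_{\R^d} x^\alpha\, d\rho = 0$ and $\int_{S^{d-1}} x^\alpha\, d\mu = 0$ for free.

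Given this observation, the only non-trivial constraints to satisfy are $\sum_{a \in A} w(a) b_{2\alpha}(a) = m_{2\alpha}$ for $\alpha$ ranging over partitions of integers $\leq t$ into at most $d$ parts — which is exactly the system solved in the proof of \Cref{thm:optimal-signed-designs}. The linear-algebraic spanning argument there produces $A \subset \R^t$ of size $|A| \leq p_t$ and weights $w\colon A \to \R$ realizing the required moment matching for the Gaussian case, giving a signed Gaussian design with at most $p_t \cdot 2^t d^t = O_t(d^t)$ points that correctly averages every monomial in $\mathcal P_{2t}^o$ — both those of total degree $\leq 2t$ and all odd-exponent monomials of arbitrary degree.

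For the spherical version, I would apply \Cref{thm:Gaussian-to-spherical} to this Gaussian $\mathcal P_{2t}^o$-design. One should check that the conversion preserves the $\mathcal P_{2t}^o$ property: the procedure projects points to $S^{d-1}$ and then symmetrizes across the antipode, which clearly preserves coordinate-negation symmetry, so every odd-exponent monomial still averages to zero, and the even-exponent moments of degree $\leq 2t$ are handled by the same argument as in the standard conversion. This yields a signed spherical $\mathcal P_{2t}^o$-design with $O_t(d^t)$ points as claimed.

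The main obstacle here is really just conceptual rather than technical: recognizing that the sign-symmetry of $Y_t(a)$ was used in the original proof only to kill odd-exponent monomials of total degree $\leq 2t$, but the same mechanism kills every odd-exponent monomial simultaneously, regardless of degree. Once that is noted, no new estimates or constructions are needed; the point count and the linear-algebraic spanning step are identical to those in \Cref{thm:optimal-signed-designs}.
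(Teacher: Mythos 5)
Your proposal is correct and matches the paper's own reasoning: the paper derives \Cref{thm:signed-all-odd} precisely by observing that the coordinate-negation symmetry of the sets $Y_t(a)$ kills \emph{every} odd-exponent monomial regardless of total degree, so the construction from \Cref{thm:optimal-signed-designs} already is a $\mathcal P_{2t}^o$-design, with the spherical case following via \Cref{thm:Gaussian-to-spherical} exactly as you describe. No gaps.
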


This is a strong statement, since $\mathcal P_{2t}^o$ is an infinite-dimensional vector space, and an indication that the monomials with all even degrees are the driving force behind the lower bound of $\Omega_t(d^t)$ for the size of a spherical $2t$-design.

In the proof of \Cref{thm:optimal-signed-designs}, we didn't make use of the particular properties of the Gaussian measure other than its symmetry under coordinate permutations and negations, so this result extends to an entire family of measures: 

\begin{theorem}\label{thm:signed-symmetric-measures}
    If $\nu$ is a measure on $\R^d$ that is symmetric with respect to coordinate permutations and negation, then there is a signed $\mathcal P_{2t}^o$-design for $\nu$ with at most $O_t(d^t)$ points.
\end{theorem}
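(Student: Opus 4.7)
The plan is to reuse the framework developed for \Cref{thm:optimal-signed-designs} essentially verbatim, observing that every step of that argument used the Gaussian measure $\rho$ only through its invariance under coordinate permutations and reflections. Throughout, write $m_\alpha := \int_{\R^d} x^\alpha\, d\nu$ and, for $a \in \R^t$, let $Y_t(a)$ be the same symmetrized multiset as before and $b_\alpha(a) := \sum_{y \in Y_t(a)} y^\alpha$. A signed $\mathcal P_{2t}^o$-design of the desired form is produced as a weighted union $\bigcup_{a \in A} Y_t(a)$ with weights $w(a)$ satisfying $m_\alpha = \sum_{a \in A} w(a)\, b_\alpha(a)$ for every monomial $x^\alpha$ with $|\alpha| \le 2t$ or with some odd component.

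The first step is to dispatch the infinite family of ``bad-parity'' monomials for free. If some $\alpha_i$ is odd, then by the hypothesized symmetry of $\nu$ under $x_i \mapsto -x_i$, we get $m_\alpha = 0$; and since $Y_t(a)$ is invariant under every coordinate negation by construction, $b_\alpha(a) = 0$ identically. Thus these constraints are automatically satisfied no matter how $w$ is chosen, so the infinite-dimensional nature of $\mathcal P_{2t}^o$ never enters. What remains are the finitely many constraints coming from $\alpha \in (2\N_0)^d$ with $|\alpha| \le 2t$, and by $S_d$-invariance of both $m_\alpha$ and $b_\alpha(a)$ these reduce to one representative per partition $\alpha \in P$ of an integer $\le t$ into at most $d$ parts.

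The second step is the linear-algebraic core. Define vectors $m := (m_{2\alpha})_{\alpha \in P} \in \R^P$ and $b(a) := (b_{2\alpha}(a))_{\alpha \in P} \in \R^P$. I claim $\{b(a) : a \in \R^t\}$ spans $\R^P$; once this is shown, Carath\'eodory's theorem (or simply picking a basis) produces $A \subset \R^t$ with $|A| \le |P| = p_t = O_t(1)$ and weights $w \colon A \to \R$ realizing $m$, and the resulting design has at most $|P| \cdot |Y_t(a)| \le p_t \cdot 2^t d^t = O_t(d^t)$ points. The spanning claim is identical to the one in the proof of \Cref{thm:optimal-signed-designs} and does not involve $\nu$ at all: the map $a \mapsto b_{2\alpha}(a)$ is a polynomial on $\R^t$ whose monomials are exactly the permutations of the exponent vector $2\alpha$, so distinct $\alpha \in P$ produce polynomials supported on disjoint sets of monomials, hence linearly independent as polynomials on $\R^t$, hence no nonzero $c \in \R^P$ can annihilate every $b(a)$.

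The only conceivable obstacle is whether the coordinate symmetry of $\nu$ alone really suffices everywhere it was invoked for $\rho$, and a careful reread confirms this: symmetry is used exactly twice, once to force $m_\alpha = 0$ for odd-parity $\alpha$ and once to pass from all of $(2\N_0)^d$ to partition representatives via the $S_d$-invariance of $m_\alpha$. The rest of the argument is a statement about the polynomials $b_{2\alpha}$ themselves. So the same proof, with $\rho$ replaced by $\nu$ throughout, yields the claim.
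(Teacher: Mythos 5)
Your proposal is correct and matches the paper's own treatment: the paper proves \Cref{thm:signed-symmetric-measures} simply by remarking that the proof of \Cref{thm:optimal-signed-designs} uses the Gaussian measure only through its invariance under coordinate permutations and negations, and you have spelled out exactly where that invariance enters (vanishing of odd-parity moments and reduction to partition representatives) while correctly noting that the spanning argument for $\{b(a)\}$ is independent of the measure.
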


\section{Approximate designs}\label{sec:approximate-designs}

This section proposes two definitions of approximate designs and proves bounds on their sizes. \Cref{sec:L2-approximate-designs} introduces a definition of approximation relative to polynomial test functions, while \Cref{sec:tensor-approximate-designs} introduces a definition that parallels those for approximate unitary designs and proves \Cref{thm:approx-tensor-des}.

The lower bounds for the two types of approximate designs are proven in entirely different ways, but constructions for both are obtained using the probabilistic method.




\subsection{\texorpdfstring{$\MakeUppercase L^2$}{L2}-approximate designs}\label{sec:L2-approximate-designs}

Intuitively, an approximate design should satisfy $\sum_{x \in X} f(x) \approx \int f\, d\mu$. If we scale $f$ by a constant, then the error in the approximation will scale as well, so it makes sense to measure the error of the approximation relative to the norm of $f$:

\begin{definition}\label{def:L2-approx-designs}
    A set $X$ is an $\epsilon$-approximate spherical $t$-design if 
    \begin{equation}\label{eq:L2-approx-design}
        \left\lvert \frac{1}{|X|} \sum_{x \in X} f(x) - \int\limits_{\mathclap{S^{d-1}}} f(x)\, dx\right\rvert
        \leq \epsilon \lVert f\rVert_2
    \end{equation}
    for every polynomial $f$ of degree at most $t$, where $\lVert f\rVert_2$ is the $L^2$ norm $(\int\! f^2)^{1/2}$.
\end{definition}

Here, as in the rest of the paper, $\| \cdot \|_2$ is the $L^2$-norm $\|f\|_2 = \big(\int_{S^{d-1}} |f|^{2}\,d\mu\big)^{1/2}$. Of course, \Cref{def:L2-approx-designs} can be modified to account for \emph{weighted} approximate designs. For clarity, we'll stick with unweighted designs; small modifications of the proofs here imply the same results for weighted approximate designs.

If $\epsilon$ is small enough, then the lower bound in \Cref{thm:weighted-design-lower-bound} also holds for approximate designs.

\begin{proposition}\label{thm:lin-alg-approx-design-lower-bound}
    There is a constant $c_{d,t} > 0$ such that: If $\epsilon < c_{d,t}$, then any $\epsilon$-approximate spherical $2t$-design has at least $\dim(\mathcal P_t^\mu) = \binom{d+t -1}{d-1} + \binom{n+t-2}{d-1}$ points.
\end{proposition}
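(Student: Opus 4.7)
The plan is to extend the injectivity argument from \Cref{thm:weighted-design-lower-bound} to the approximate setting by using the approximate design condition as a substitute for the exact integration identity. Specifically, I will show that if $|X|<\dim(\mathcal P_t^\mu)$, then the evaluation map $\phi \colon \mathcal P_t^\mu \to \R^X$ is not injective, producing a nonzero $h \in \mathcal P_t^\mu$ that vanishes on every point of $X$; applying the approximate design condition to $h^2$ will then force $\|h\|_2$ to be small relative to itself, which yields the contradiction whenever $\epsilon$ is sufficiently small.

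Carrying this out, I start by picking $h \in \mathcal P_t^\mu$ with $h \neq 0$ and $h(x)=0$ for all $x \in X$. Since $h$ has degree at most $t$, $h^2$ has degree at most $2t$, so it is a legal test function for an $\epsilon$-approximate $2t$-design. Applying \eqref{eq:L2-approx-design} with $f = h^2$ and using $\sum_{x \in X} h(x)^2 = 0$ gives
\[
    \|h\|_2^2 = \int\limits_{\mathclap{S^{d-1}}} h^2 \, d\mu \;\leq\; \epsilon\, \|h^2\|_2.
\]
The remaining step is to compare $\|h^2\|_2$ to $\|h\|_2^2$.

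For this I invoke a Nikolskii-type inequality on the sphere: on the finite-dimensional space $\mathcal P_t^\mu$, all norms are equivalent, so there is a constant $K_{d,t}$ with $\|h\|_\infty \leq K_{d,t}\|h\|_2$ for every $h \in \mathcal P_t^\mu$. (Quantitatively, one may take $K_{d,t}^2 = \dim(\mathcal P_t^\mu)$ by evaluating the reproducing kernel $K_t(x,x)$, which is constant on $S^{d-1}$ by rotational invariance and integrates to $\dim(\mathcal P_t^\mu)$.) Then
\[
    \|h^2\|_2 \;\leq\; \|h\|_\infty \cdot \|h\|_2 \;\leq\; K_{d,t}\, \|h\|_2^2,
\]
so combining with the previous inequality gives $\|h\|_2^2 \leq \epsilon K_{d,t}\|h\|_2^2$. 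Since $h \not\equiv 0$ on $S^{d-1}$, we have $\|h\|_2 > 0$, yielding $1 \leq \epsilon K_{d,t}$. Taking $c_{d,t} = 1/K_{d,t}$ contradicts the hypothesis $\epsilon < c_{d,t}$ and completes the argument.

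The main obstacle is really just the Nikolskii bound; everything else is a direct adaptation of \Cref{thm:weighted-design-lower-bound}. That bound is quite standard for polynomials on $S^{d-1}$, but the dependence of $K_{d,t}$ on $d$ (for fixed $t$) is what determines how small $\epsilon$ must be for the lower bound to kick in. In the worst case one can simply cite the equivalence of norms on the finite-dimensional space $\mathcal P_t^\mu$ without producing an explicit constant, since the proposition only claims the existence of some $c_{d,t}>0$.
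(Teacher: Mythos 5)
Your proof is correct and takes essentially the same route as the paper's: produce a nonzero $h \in \mathcal P_t^\mu$ vanishing on $X$, test the approximate design condition against $h^2$, and derive a contradiction from the equivalence of norms on a finite-dimensional polynomial space. The only difference is cosmetic --- the paper invokes $\lVert g\rVert_1 \geq c_{d,t}\lVert g\rVert_2$ on $\mathcal P_{2t}^\mu$ where you pass through $\lVert h\rVert_\infty \leq K_{d,t}\lVert h\rVert_2$ on $\mathcal P_t^\mu$, which has the minor bonus of yielding the explicit constant $c_{d,t} = \dim(\mathcal P_t^\mu)^{-1/2}$ via the reproducing kernel.
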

\begin{proof} 
    Because $\mathcal P_{2t}^\mu$ is a finite-dimensional vector space, all norms on $\mathcal P_{2t}^\mu$ are equivalent. So there is a constant $c_{d,t}$ such that $\|g\|_1 \geq c_{d,t}\|g\|_2$ for any $g \in \mathcal P_{2t}^\mu$. If $|X| < \dim(\mathcal P_{t}^\mu)$, then there is a nonzero polynomial $f$ of degree at most $t$ that vanishes on every point of $X$, in which case
    \[
        \left\lvert \frac{1}{|X|} \sum_{x \in X} f(x)^2 - \int\limits_{\mathclap{S^{d-1}}} f(x)^2\, dx\right\rvert
        = \lVert f^2\rVert_1
        \geq c_{t,d} \|f^2\|_2.
    \]
    Since $\epsilon < c_{d,t}$, condition \eqref{eq:L2-approx-design} fails for the polynomial $f^2$, so $X$ is not an $\epsilon$-approximate design.
\end{proof}

The remainder of this section is devoted to determining a better quantitative understanding of how the size of approximate designs depends on $\epsilon$ and $t$. To do so, we will formulate a linear programming bound, following Delsarte, Goethals, and Seidel's approach in \cite{delsarte-spherical-codes-designs}, with modifications to account for approximation. The linear programming bound appears as \Cref{thm:approx-design-LP}, and the quantitative bound on approximate designs is \Cref{thm:L2-approx-designs-lower-bound}.

Let $\hat{\mathcal P}_t^\mu$ denote the set of functions $f \in \mathcal P_t^\mu$ such that $\int_{S^{d-1}} f\,d\mu = 0$. A set $X$ is an $\epsilon$-approximate $t$-design if and only if 
\[
    \left\lvert \frac{1}{|X|} \sum_{x \in X} f(x)\right\rvert
        \leq \epsilon \lVert f\rVert_2
\]
for every $f \in \hat{\mathcal P}_t^\mu$. We now focus on this vector space.

In proving our linear programming result, we will make use of a special class of polynomials called the \emph{Gegenbauer polynomials}. For each $d$, the Gegenbauer polynomials $\{Q_k^d\}_{k\geq 0}$ are a sequence of orthogonal polynomials where $Q_k^d$ has degree $k$.\footnote{They are orthogonal with respect to the measure $(1-x^2)^{(d-3)/2}$ on the interval $[-1,1]$, though that specific fact won't be important to us.} A few relevant properties of the Gegenbauer polynomials are outlined here; further details are included in Section \ref{sec:gegenbauer-appendix}.

The evaluation map map $f \mapsto f(x)$ is a linear functional on the vector space $\hat{\mathcal P}_{t}^\mu$, so there is a polynomial $\ev_x \in \hat{\mathcal P}_{t}^\mu$ such that $\langle \ev_x, f\rangle = f(x)$, where the inner product is defined as $\langle f,g\rangle = \int_{S^{d-1}} fg\, d\mu$. As it turns out,
\begin{equation}\label{eq:Gegenbauer-angle}
    \langle \ev_x,\ev_y\rangle = \sum_{k=1}^t Q_k^d\!\big(\langle x,y\rangle).
\end{equation}
An important property of the Gegenbauer polynomials is that they are \emph{positive-definite kernels}, which in particular guarantees that for any finite point set $X \subset S^{d-1}$ and $k\geq 0$, we have
\[
    \sum_{x,y \in X} Q_k^d\!\big(\langle x,y\rangle\big) \geq 0.
\]

\begin{lemma}\label{thm:approx-design-condition}
    A set $X \subseteq S^{d-1}$ is an $\epsilon$-approximate spherical $t$-design if and only if
    \[
        \frac{1}{|X|^2} \sum_{x,y \in X} Q_{\leq t}^d\big(\langle x,y\rangle\big) \leq \epsilon^2,
    \]
    where $Q_{\leq t}^d = Q_1^d + Q_2^d + \cdots Q_t^d$.
\end{lemma}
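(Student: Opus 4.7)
The plan is to recognize that the approximate design condition is really a statement about a single vector in the inner product space $\hat{\mathcal P}_t^\mu$, and that computing its norm gives exactly the Gegenbauer sum on the right-hand side.

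First I would rewrite the approximate design condition purely in terms of functions in $\hat{\mathcal P}_t^\mu$. Any polynomial $f$ of degree at most $t$ can be decomposed uniquely as $f = c + g$ where $c = \int_{S^{d-1}} f\,d\mu$ and $g \in \hat{\mathcal P}_t^\mu$; both sides of \eqref{eq:L2-approx-design} depend only on $g$ (the constant cancels on the left and $\lVert g\rVert_2 \leq \lVert f\rVert_2$ while the two sides are invariant in the appropriate way), so $X$ is an $\epsilon$-approximate $t$-design iff
\[
    \Bigl\lvert \frac{1}{|X|}\sum_{x \in X} g(x)\Bigr\rvert \leq \epsilon \lVert g\rVert_2
    \qquad\text{for every } g \in \hat{\mathcal P}_t^\mu.
\]

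Second, I would package the sum $\sum_{x \in X} g(x)$ as an inner product. Set $E = \sum_{x \in X} \ev_x \in \hat{\mathcal P}_t^\mu$; by the defining property of the evaluation polynomials, $\langle E, g\rangle = \sum_{x \in X} g(x)$ for every $g \in \hat{\mathcal P}_t^\mu$. The approximate design condition then reads $\lvert \langle E,g\rangle\rvert \leq \epsilon |X|\, \lVert g\rVert_2$ for all $g \in \hat{\mathcal P}_t^\mu$, which is exactly the statement that the dual norm of $E$ is at most $\epsilon |X|$. Since we are in a Hilbert space, Cauchy--Schwarz gives one direction, and plugging in $g = E$ (if $E = 0$ the statement is trivial) saturates the inequality, showing the two conditions are equivalent to
\[
    \lVert E\rVert_2^2 \leq \epsilon^2 |X|^2.
\]

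Finally, I would expand $\lVert E\rVert_2^2$ using \eqref{eq:Gegenbauer-angle}:
\[
    \lVert E\rVert_2^2 = \sum_{x,y \in X}\langle \ev_x,\ev_y\rangle = \sum_{x,y \in X}\sum_{k=1}^t Q_k^d\bigl(\langle x,y\rangle\bigr) = \sum_{x,y \in X} Q_{\leq t}^d\bigl(\langle x,y\rangle\bigr).
\]
Dividing by $|X|^2$ gives the claimed equivalence. The only subtle point is the reduction to $\hat{\mathcal P}_t^\mu$ in the first step --- one must verify that restricting to mean-zero test functions does not weaken the approximate design condition --- but this is immediate from the constant splitting together with the fact that approximate designs tautologically average constants correctly. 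Once past that, the argument is a clean Hilbert-space duality calculation.
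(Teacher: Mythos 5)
Your proof is correct and takes essentially the same route as the paper: the reduction to mean-zero test functions in $\hat{\mathcal P}_t^\mu$ is carried out in the paragraph immediately preceding the lemma, and the paper's argument is precisely your Hilbert-space duality step (that $\lvert\langle \sum_{x\in X}\ev_x, f\rangle\rvert \leq \epsilon\,\lvert X\rvert\,\lVert f\rVert_2$ for all $f\in\hat{\mathcal P}_t^\mu$ if and only if $\lVert\sum_{x\in X}\ev_x\rVert_2 \leq \epsilon\,\lvert X\rvert$) followed by expanding the squared norm via \eqref{eq:Gegenbauer-angle}. Your extra care in checking that restricting to mean-zero test functions loses nothing is a detail the paper leaves implicit, but the underlying argument is identical.
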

\begin{proof}
    The point set $X$ is an approximate design if and only if 
    \[
        \left\lvert \Big\langle \frac{1}{|X|} \sum_{x\in X} \ev_x,\, f \Big\rangle \right\rvert 
        < \epsilon \lVert f\rVert_2
    \]
    for every $f \in \hat{\mathcal P}_{t}^\mu$. This is true if and only if
    \[
        \Big\rVert \frac{1}{|X|} \sum_{x\in X} \ev_x \Big\rVert_2 \leq \epsilon.
    \]
    Squaring both sides and applying \eqref{eq:Gegenbauer-angle} finishes the proof.
\end{proof}

\begin{lemma}\label{thm:approx-design-LP}
    Let $g = \sum_{k\geq 0} \alpha_k Q_k^d$ be a polynomial such that $g(s)\geq 0$ for $s \in [-1,1]$ and $\alpha_k \leq 0$ for $k > t$. Let $\alpha = \max_{1 \leq k \leq t} \alpha_k$. Any $\epsilon$-approximate spherical $t$-design has at least
    \[
        \frac{g(1)}{\alpha_0 + \epsilon^2 \alpha}
    \]
    points.
\end{lemma}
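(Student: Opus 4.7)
The plan is to follow the classical Delsarte linear programming strategy: evaluate the double sum $\sum_{x,y \in X} g(\langle x,y\rangle)$ in two different ways, with the upper bound modified to incorporate the approximation condition from \Cref{thm:approx-design-condition}. First, for the lower bound, since $g(s) \geq 0$ for $s \in [-1,1]$ and $\langle x,y\rangle \in [-1,1]$, the sum is bounded below by its diagonal contribution, so $\sum_{x,y \in X} g(\langle x,y\rangle) \geq |X|\, g(1)$.

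For the upper bound, I would expand $g$ in the Gegenbauer basis and split the resulting double sum into three pieces according to the degree $k$. The $k = 0$ term contributes exactly $\alpha_0 |X|^2$, using the standard normalization $Q_0^d \equiv 1$. The tail piece with $k > t$ contributes at most $0$, because each $\alpha_k \leq 0$ by hypothesis and each $\sum_{x,y} Q_k^d(\langle x,y\rangle) \geq 0$ by the positive-definite-kernel property cited just before \Cref{thm:approx-design-condition}. For the middle piece $1 \leq k \leq t$, I would replace every coefficient $\alpha_k$ by the maximum $\alpha$; since the inner sums $\sum_{x,y} Q_k^d(\langle x,y\rangle)$ are nonnegative, this substitution only enlarges the value. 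Now \Cref{thm:approx-design-condition} supplies precisely the bound $\sum_{x,y} Q_{\leq t}^d(\langle x,y\rangle) \leq \epsilon^2 |X|^2$, which yields a contribution of at most $\alpha \epsilon^2 |X|^2$ from the middle piece.

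Combining the two estimates gives $|X|\,g(1) \leq (\alpha_0 + \epsilon^2 \alpha)\,|X|^2$, and dividing by $|X|(\alpha_0 + \epsilon^2 \alpha)$ yields the claimed lower bound. The argument is largely bookkeeping; the only mildly delicate step is checking that replacing possibly negative $\alpha_k$ (for $1 \leq k \leq t$) by $\alpha$ really does upper-bound the middle piece, but this is immediate from the fact that each inner Gegenbauer sum is nonnegative. Relative to the exact-design LP bound, the only new ingredient is the $\epsilon^2 \alpha$ term absorbed into the denominator, which quantifies exactly how much slack in the design condition costs us.
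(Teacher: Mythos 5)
Your proposal is correct and takes essentially the same route as the paper: both bound $\sum_{x,y\in X} g(\langle x,y\rangle)$ below by its diagonal contribution $|X|\,g(1)$ and above by splitting the Gegenbauer expansion into the $k=0$ term, the range $1\le k\le t$, and the tail $k>t$, discarding the tail by positive-definiteness and controlling the middle range via \Cref{thm:approx-design-condition}. The only cosmetic difference is that you justify replacing each $\alpha_k$ by $\alpha$ term by term using the nonnegativity of each kernel sum, while the paper packages the same observation as positive-definiteness of $\sum_{k}(\alpha-\alpha_k)Q_k^d$.
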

\begin{proof}
    As is typical in Delsarte-style linear programming bounds, we bound the sum $\frac{1}{|X|^2} \sum_{x,y \in X} g(\langle x,y\rangle)$ both above and below. For the upper bound, we have
    \begin{align*}
        \frac{1}{|X|^2} \sum_{x,y \in X} g\big(\langle x,y\rangle\big)
      &\leq
        \alpha_0 + \frac{1}{|X|^2} \sum_{x,y\in X} \sum_{k = 1}^t \alpha_k Q_k^d\big(\langle x,y\rangle\big)\\
      &\leq
        \alpha_0 + \frac{1}{|X|^2} \sum_{x,y \in X} \alpha \, Q_{\leq t}^d\big(\langle x,y\rangle\big)\\
      &\leq
        \alpha_0 + \epsilon^2 \alpha.
    \end{align*}
    The first inequality is because $Q_k^d$ is a positive-definite kernel, so $\sum_{x,y\in X} \alpha_k Q_k^d (\langle x,y\rangle) \leq 0$ for $k > t$. The second inequality is because $\sum_{k=0}^t (\alpha - \alpha_k) Q_k^d$ is a positive-definite kernel. And the last is just an application of \Cref{thm:approx-design-condition}.

    On the other hand, since $g(s) \geq 0$ for all $s \in [-1,1]$, we can obtain a lower bound by only counting the contributions from the terms where $x=y$:
    \[
        \frac{1}{|X|^2} \sum_{x,y \in X} g\big(\langle x,y\rangle\big)
        \geq \frac{1}{|X|} g(1).
    \]
    Combining the lower and upper bounds, we have
    \[
        |X| \geq \frac{g(1)}{\alpha_0 + \epsilon^2 \alpha}. \qedhere
    \]
\end{proof}

The original linear programming bound in \cite{delsarte-spherical-codes-designs} states that if $g$ satisfies the conditions in \Cref{thm:approx-design-LP}, then any spherical $t$-design has at least $g(1)/\alpha_0$ points. This means that as $\epsilon \to 0$, the linear programming bound for approximate designs approaches the linear programming bound for exact designs.

To get a numerical lower bound on $2t$-designs, we will choose the particular function $g = (Q_t^d)^2$. To carry out the calculations, we will employ a useful ``linearization formula'' for Gegenbauer polynomials.

Let $C_k^\lambda (x)$ denote the special function
\[
    C_k^\lambda (x) = \frac{(2\lambda)_k}{\big(\lambda + \frac{1}{2}\big)_k} P^{(\lambda - 1/2, \lambda - 1/2)}_k(x),
\]
where $P^{(\lambda - 1/2, \lambda - 1/2)}_k (x)$ is the Jacobi polynomial and $(\lambda)_k = \lambda (\lambda+1) \cdots (\lambda+k-1)$. Our polynomial $Q_k^d$ is equal to $C_k^{(d-2)/2}$ up to a constant depending on $k$ \cite{delsarte-spherical-codes-designs}:
\[
    Q_k^d
    = \frac{d+2k-2}{d-2}\ C_k^{(d-2)/2}.
\]

\begin{lemma}[Gegenbauer linearization {\cite[Theorem 6.8.2]{andrews1999special}}]
    Using the shorthand $(\lambda)_k = \lambda (\lambda+1) \cdots (\lambda+k-1)$, we have
    \[
        C_m^\lambda (x) C_n^\lambda (x)
        = \sum_{k=0}^{\min(m,n)} a_{m,n}(k) \, C^\lambda_{m+n-2k} (x),
    \]
    where
    \[
        a_{m,n}(k) = \frac{
            (m+n+\lambda - 2k)\, (\lambda)_k\, (\lambda)_{m-k}\, (\lambda)_{n-k} \, (2\lambda)_{m+n-k}
        }{
            (m+n+\lambda - k)\, k!\, (m-k)!\, (n-k)!\, (\lambda)_{m+n-k}\, (2\lambda)_{m+n-2k}
        }.
    \]
\end{lemma}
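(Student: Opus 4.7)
The plan is to combine orthogonality, a parity constraint, and an explicit Fourier expansion. Because $C_m^\lambda \cdot C_n^\lambda$ is a polynomial of degree $m+n$, it has a unique expansion $\sum_{j=0}^{m+n} c_j\, C_j^\lambda$ in the Gegenbauer basis. The parity relation $C_j^\lambda(-x) = (-1)^j\, C_j^\lambda(x)$ forces $c_j = 0$ unless $j \equiv m+n \pmod 2$, and the ``triangle'' constraint $c_j = 0$ for $j < |m-n|$ follows from the vanishing of the triple integral $\int_{-1}^{1} C_a^\lambda C_b^\lambda C_c^\lambda (1-x^2)^{\lambda-1/2}\, dx$ whenever the indices $a, b, c$ fail $|a-b| \leq c \leq a+b$ (provable directly by a degree argument, or inherited from the $\mathrm{SO}(d)$-representation picture when $\lambda = (d-2)/2$). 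This reduces the statement to the form $C_m^\lambda\, C_n^\lambda = \sum_{k=0}^{\min(m,n)} a_{m,n}(k)\, C_{m+n-2k}^\lambda$, with the coefficients still to be identified.

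To pin down $a_{m,n}(k)$, I would use orthogonality to write
\[
    a_{m,n}(k) = \frac{1}{\lVert C_{m+n-2k}^\lambda\rVert^2} \int_{-1}^{1} C_m^\lambda(x)\, C_n^\lambda(x)\, C_{m+n-2k}^\lambda(x)\, (1-x^2)^{\lambda-1/2}\, dx.
\]
The substitution $x = \cos\theta$ together with the factorization $1 - 2t\cos\theta + t^2 = (1 - e^{i\theta} t)(1 - e^{-i\theta} t)$ in the generating function $(1-2xt+t^2)^{-\lambda} = \sum_n C_n^\lambda(x)\, t^n$ yields the Fourier decomposition
\[
    C_n^\lambda(\cos\theta) = \sum_{j=0}^{n} \frac{(\lambda)_j\, (\lambda)_{n-j}}{j!\, (n-j)!}\, e^{i(n-2j)\theta}.
\]
Substituting three copies of this expansion into the triple integral converts it into a triple sum of integrals of the form $\int_0^\pi e^{iN\theta}\, (\sin\theta)^{2\lambda}\, d\theta$, each of which is a standard Beta--Gamma integral.

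The main obstacle is the simplification step: after the substitution one lands on a triple sum of ratios of Pochhammer symbols, and matching it to the one-line expression for $a_{m,n}(k)$ requires a nontrivial application of classical hypergeometric identities, most likely Chu--Vandermonde and Saalsch\"utz. An alternative route that sidesteps the triple sum is to induct on $\min(m,n)$ using the three-term recurrence $(n+1)\, C_{n+1}^\lambda(x) = 2(n+\lambda)\, x\, C_n^\lambda(x) - (n + 2\lambda - 1)\, C_{n-1}^\lambda(x)$: multiply both sides by $C_m^\lambda(x)$, re-expand $x\, C_{m+n-2k}^\lambda(x)$ on the right using the same recurrence, and verify that the induced recursion on $a_{m,n}(k)$ matches the ratio of Pochhammer symbols in the claim. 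Either way, the essential difficulty sits in one combinatorial identity rather than in the structural argument.
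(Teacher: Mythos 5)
The paper does not prove this lemma at all; it is imported verbatim from Andrews--Askey--Roy (Theorem 6.8.2), so there is no in-paper argument to compare against. Your structural reductions are all correct and standard: the parity argument, the vanishing of $\int_{-1}^{1} C_a^\lambda C_b^\lambda C_c^\lambda\,(1-x^2)^{\lambda-1/2}\,dx$ outside the triangle range (a degree argument, as you say), the orthogonality formula for $a_{m,n}(k)$, and the Fourier expansion $C_n^\lambda(\cos\theta) = \sum_{j=0}^{n} \frac{(\lambda)_j\,(\lambda)_{n-j}}{j!\,(n-j)!}\, e^{i(n-2j)\theta}$ obtained by factoring the generating function. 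This is essentially the classical Dougall--Hs\"u route to the linearization formula.

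The genuine gap is that you stop exactly where the lemma begins. Everything you establish shows only that \emph{some} coefficients $a_{m,n}(k)$ exist and that they are given by a triple sum of Beta--Gamma integrals; the content of the statement is the specific closed-form ratio of Pochhammer symbols, and you never derive it. Saying the triple sum ``requires a nontrivial application of classical hypergeometric identities, most likely Chu--Vandermonde and Saalsch\"utz'' is a conjecture about which tools would work, not a computation: you would need to exhibit the resulting sum as a terminating Saalsch\"utzian ${}_3F_2$ (or reduce it to one), verify the balancing condition on its parameters, apply the summation theorem, and then simplify the product of Gamma factors to the stated expression --- and each of these steps can fail or produce a different-looking answer if the reduction is set up incorrectly. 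The alternative inductive route via the three-term recurrence has the same status: verifying that the claimed $a_{m,n}(k)$ satisfies the induced recursion is precisely the combinatorial identity you have not checked. So the proposal is a correct and viable plan, but as written it is not a proof of the formula; if you intend to rely on the result, citing the reference (as the paper does) is the honest resolution, and if you intend to prove it, the Saalsch\"utz step must actually be carried out.
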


Using this lemma, we can determine the Gegenbauer expansion of $(Q_t^d)^2$:

\begin{corollary}\label{thm:Gegenbauer-square-coeficients}
    The Gegenbauer expansion of $(Q_t^d)^2 = \sum_{k=0}^{2t} a_t(k)\, Q_k^d$ has $a_t(k) = 0$ if $k$ is odd and $a_t(k) = \Theta_t(d^{t-k/2})$ if $k$ is even.
\end{corollary}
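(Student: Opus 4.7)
The plan is to prove both claims simultaneously by running the Gegenbauer linearization formula and tracking $d$-asymptotics through every factor. First I would convert $Q_t^d$ into $C_t^\lambda$ with $\lambda = (d-2)/2$ using the scaling relation stated just before the corollary, so that
\[
    (Q_t^d)^2 \;=\; \Big(\tfrac{d+2t-2}{d-2}\Big)^{\!2}\, (C_t^\lambda)^2
    \;=\; \Big(\tfrac{d+2t-2}{d-2}\Big)^{\!2}\, \sum_{j=0}^{t} a_{t,t}(j)\, C_{2t-2j}^\lambda,
\]
the second equality being the linearization formula with $m=n=t$. The indices appearing on the right are $2t,\,2t-2,\ldots,0$, all even. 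Converting each $C_{2t-2j}^\lambda$ back into $Q_{2t-2j}^d$ by the same scaling relation then expresses $(Q_t^d)^2$ as a nonnegative combination of the even-indexed $Q_k^d$'s only. This already proves the vanishing of the odd-indexed coefficients.

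For the quantitative statement, the key elementary observation is that, for any fixed $m$, the Pochhammer symbols $(\lambda)_m$ and $(2\lambda)_m$ are polynomials in $\lambda$ of degree $m$ with positive leading coefficient, hence $(\lambda)_m,\,(2\lambda)_m = \Theta_m(d^m)$ as $d\to\infty$. I would then simply tally powers of $d$ in
\[
    a_{t,t}(j) \;=\; \frac{(2t+\lambda-2j)\,(\lambda)_j\,(\lambda)_{t-j}^{2}\,(2\lambda)_{2t-j}}{(2t+\lambda-j)\,j!\,\bigl((t-j)!\bigr)^{2}\,(\lambda)_{2t-j}\,(2\lambda)_{2t-2j}}.
\]
The numerator contributes a power $1+j+2(t-j)+(2t-j) = 4t-2j+1$, the denominator contributes $1+(2t-j)+(2t-2j) = 4t-3j+1$, and the $t$-dependent factorials in the denominator are $\Theta_t(1)$. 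Net exponent: $a_{t,t}(j) = \Theta_t(d^j)$. The conversion prefactor $(d+2t-2)^2 (d-2)^{-1}(d+4t-4j-2)^{-1}$ is $\Theta_t(1)$, so the coefficient of $Q_{2t-2j}^d$ in $(Q_t^d)^2$ is $\Theta_t(d^j)$. Substituting $k=2t-2j$ gives $a_t(k) = \Theta_t(d^{t-k/2})$ for even $k$, as claimed.

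There is essentially no obstacle of the ``need an idea'' kind; the calculation is algorithmic once the linearization formula is in hand. The only thing to be genuinely careful about is the $\Theta$ (as opposed to $O$) claim: one must verify that no leading coefficient accidentally cancels to zero. This is clear here because every factor in the numerator and denominator of $a_{t,t}(j)$ is strictly positive for $d>2$ (Pochhammer symbols with positive argument) and the prefactor is a positive rational function of $d$, so neither the numerator nor the denominator can vanish to subleading order in $d$. Thus the main burden is bookkeeping, and a clean presentation would simply state the three powers of $d$ contributed by numerator, denominator, and prefactor, note their sum, and read off the exponent $t-k/2$.
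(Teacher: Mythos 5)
Your proposal is correct and is exactly the computation the paper intends: the linearization lemma is stated immediately before the corollary precisely so that one applies it with $m=n=t$, observes that only even indices $2t-2j$ occur, and reads off the exponent $j = t-k/2$ from the Pochhammer asymptotics $(\lambda)_m, (2\lambda)_m = \Theta_m(d^m)$ together with the $\Theta_t(1)$ conversion factors between $Q_k^d$ and $C_k^{(d-2)/2}$. Your bookkeeping of the powers of $d$ and the positivity check ensuring the $\Theta$ (not merely $O$) bound are both accurate.
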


We can now prove a quantitative lower bound on the size of approximate designs. (As with exact designs, if $X$ is an $\epsilon$-approximate $2t$-design, then $X \sqcup (-X)$ is an $\epsilon$-approximate $(2t+1)$-design.)

\begin{theorem}\label{thm:L2-approx-designs-lower-bound}
    Any $\epsilon$-approximate spherical $2t$-design in $\R^d$ has at least
    \[
        c_t \frac{d^{2t}}{d^t + \epsilon^2 d^{t-1}}
    \]
    points.
\end{theorem}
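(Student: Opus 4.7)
The plan is to apply Lemma \ref{thm:approx-design-LP} with its parameter $t$ replaced by $2t$ (this replacement is routine: it corresponds to using $Q_{\leq 2t}^d$ in the design characterization of Lemma \ref{thm:approx-design-condition}, and the proof of Lemma \ref{thm:approx-design-LP} goes through verbatim). The natural choice of test polynomial is $g = (Q_t^d)^2$. Both hypotheses of the lemma are immediate: $g \geq 0$ on $[-1,1]$ because $g$ is a square, and the sign condition on the Gegenbauer coefficients $\alpha_k$ for $k > 2t$ is vacuous since $g$ has degree exactly $2t$.

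It then suffices to estimate the three quantities appearing in the bound $g(1)/(\alpha_0 + \epsilon^2 \alpha)$. For $g(1)$, I would use the standard addition-formula identity $Q_t^d(1) = h_t^d$, where $h_t^d = \binom{d+t-1}{t} - \binom{d+t-3}{t-2}$ is the dimension of the space of degree-$t$ spherical harmonics in $\R^d$. As a polynomial in $d$, $h_t^d$ has degree $t$ with leading coefficient $1/t!$, so $h_t^d = \Theta_t(d^t)$ and $g(1) = \Theta_t(d^{2t})$. The other two quantities are read directly off Corollary \ref{thm:Gegenbauer-square-coeficients}: the constant coefficient is $\alpha_0 = a_t(0) = \Theta_t(d^t)$; and since the odd coefficients vanish while $a_t(2j) = \Theta_t(d^{t-j})$, the maximum $\alpha = \max_{1 \leq k \leq 2t} a_t(k)$ over $k \geq 1$ is attained at $k = 2$, giving $\alpha = \Theta_t(d^{t-1})$.

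Substituting into the LP bound yields
\[
    |X| \;\geq\; \frac{g(1)}{\alpha_0 + \epsilon^2 \alpha} \;=\; \frac{\Theta_t(d^{2t})}{\Theta_t(d^t) + \epsilon^2\, \Theta_t(d^{t-1})},
\]
which is exactly the claimed bound $c_t\, d^{2t}/(d^t + \epsilon^2 d^{t-1})$ after collecting the $t$-dependent constants into $c_t$. The only real content in the proof lies in Corollary \ref{thm:Gegenbauer-square-coeficients} and in the LP machinery already developed, so there is no genuine obstacle here; the argument is essentially bookkeeping. The one point worth flagging is that the two terms in the denominator have a clean conceptual origin: the $d^t$ term comes from the $k=0$ contribution (present even for exact designs), while the $\epsilon^2 d^{t-1}$ term is the $\epsilon$-correction coming from the next-largest Gegenbauer coefficient $a_t(2)$.
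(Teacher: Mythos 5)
Your proof is correct and is essentially identical to the paper's: both apply the linear programming bound of Lemma \ref{thm:approx-design-LP} with $g = (Q_t^d)^2$, compute $g(1) = \Theta_t(d^{2t})$ via $Q_t^d(1) = \dim(\mathcal W_t)$, and read off $\alpha_0 = \Theta_t(d^t)$ and $\alpha = \Theta_t(d^{t-1})$ from Corollary \ref{thm:Gegenbauer-square-coeficients}. Your explicit verification of the lemma's hypotheses and of the $t \mapsto 2t$ substitution is a minor elaboration of details the paper leaves implicit.
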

\begin{proof}
    The function $g = (Q_t^d)^2$ satisfies the requirements of \Cref{thm:approx-design-LP}. By \Cref{thm:gegenbauer-value-at-1}, we have $g(1) = Q_t^d(1)^2 = \Theta_t(d^{2t})$, while \Cref{thm:Gegenbauer-square-coeficients} says that $\alpha_0 = \Theta_t(d^t)$ and $\alpha = \Theta_t(d^{t-1})$.
\end{proof}

As for the upper bound:

\begin{proposition}[Construction of approximate $L^2$-designs]
    There is an unweighted $\epsilon$-approximate spherical $2t$-design with at most $O_t(\epsilon^{-2} d^{2t})$ points.
\end{proposition}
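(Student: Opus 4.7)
The plan is to apply the probabilistic method, using the spectral characterization of approximate designs in \Cref{thm:approx-design-condition}. For a suitable integer $N$, sample $x_1,\dots,x_N$ independently and uniformly on $S^{d-1}$, and analyze
\[
    S \;:=\; \frac{1}{N^2}\sum_{i,j=1}^N Q_{\leq 2t}^d\!\big(\langle x_i, x_j\rangle\big).
\]
By \Cref{thm:approx-design-condition} (applied at strength $2t$), any realization with $S\le \epsilon^2$ yields an $\epsilon$-approximate spherical $2t$-design of size $N$, so it suffices to show that $\E[S]\le \epsilon^2$ once $N$ is of order $\epsilon^{-2}d^{2t}$, and then extract a good sample.

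The next step is to split $S$ into diagonal and off-diagonal contributions. Each diagonal term contributes $Q_{\leq 2t}^d(1)/N^2$, for a total of $Q_{\leq 2t}^d(1)/N$. From the standard asymptotic $Q_k^d(1)=\Theta_t(d^k)$ (the values that underlie \Cref{thm:Gegenbauer-square-coeficients}, with the computation deferred to Section~\ref{sec:gegenbauer-appendix}), one gets $Q_{\leq 2t}^d(1) = \Theta_t(d^{2t})$. For an off-diagonal pair $i\neq j$, independence lets us average out one coordinate: for fixed $x$ and $1\le k\le 2t$,
\[
    \int\limits_{\mathclap{S^{d-1}}} Q_k^d\!\big(\langle x, y\rangle\big)\, d\mu(y) = 0,
\]
since $Q_k^d(\langle x,\cdot\rangle)$ is a nontrivial zonal spherical harmonic---equivalently, $Q_k^d$ is orthogonal to the constant function, which is also why $\langle \ev_x,\ev_y\rangle$ in \eqref{eq:Gegenbauer-angle} is a sum starting at $k=1$. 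Hence every off-diagonal term has expectation zero, and $\E[S] = Q_{\leq 2t}^d(1)/N = \Theta_t(d^{2t}/N)$.

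Choosing $N$ to be the least integer at least $C_t\,\epsilon^{-2} d^{2t}$ for a suitable constant $C_t$ forces $\E[S] \le \epsilon^2$; the probabilistic method then supplies a realization with $S\le\epsilon^2$, yielding an unweighted $\epsilon$-approximate spherical $2t$-design with $O_t(\epsilon^{-2}d^{2t})$ points. I do not expect a genuine obstacle here, because the heavy lifting is already packaged in \Cref{thm:approx-design-condition} and the known order of $Q_k^d(1)$; the only item to verify carefully is the off-diagonal vanishing, which reduces to the orthogonality of $Q_k^d$ to the constant function for $k\ge 1$.
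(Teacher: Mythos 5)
Your argument is correct, and it is the same first-moment probabilistic computation as the paper's, just packaged through a different lemma. The quantity you bound, $S = \frac{1}{N^2}\sum_{i,j} Q_{\leq 2t}^d(\langle x_i,x_j\rangle)$, is by \eqref{eq:Gegenbauer-angle} exactly $\bigl\lVert \frac{1}{N}\sum_i \ev_{x_i}\bigr\rVert_2^2$, which is the same expression the paper bounds after expanding in an orthonormal basis $f_1\equiv 1, f_2,\dots,f_r$; your diagonal term $Q_{\leq 2t}^d(1)/N$ is the paper's $(r-1)/k$, since $Q_{\leq 2t}^d(1)=\sum_{k=1}^{2t}\dim(\mathcal W_k)=r-1$ by \Cref{thm:gegenbauer-value-at-1}. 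The difference is where the reduction to arbitrary test polynomials happens: you invoke \Cref{thm:approx-design-condition} up front, so the kernel characterization absorbs that step and you only need the off-diagonal vanishing (which you correctly reduce to $Q_k^d(\langle x,\cdot\rangle)=\ev_{k,x}\in\mathcal W_k$ being orthogonal to constants for $k\geq 1$). The paper instead works with the orthonormal basis directly and closes with a Cauchy--Schwarz step to pass from the $f_i$ to a general polynomial of unit norm. Your route is slightly cleaner given that \Cref{thm:approx-design-condition} is already available, and it makes the diagonal/off-diagonal structure transparent; the paper's route has the advantage of being independent of the Gegenbauer machinery and, as remarked there, of transferring verbatim to approximate designs for any family of functions on any measure space. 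Both yield $N=\Theta_t(\epsilon^{-2}d^{2t})$.
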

\begin{proof}
    Let $f_1 \equiv 1, f_2,\dots, f_r$ be an orthonormal basis for the set of polynomial functions on $S^{d-1}$ of degree at most $2t$. We will use the probabilistic method to find a set $Y$ such that
    \[
        \left\lvert \frac{1}{|Y|} \sum_{y \in Y} f_i(y) - \int\limits_{\mathclap{S^{d-1}}} f_i\, d\mu\right\rvert^2
        \leq \epsilon^2
    \]
    for every $1 \leq i \leq r$. Since $\|f_i\|_2 = 1$, this means that $Y$ satisfies inequality \eqref{eq:L2-approx-design} for the functions $f_1,\dots,f_r$. We will then show that this implies that \eqref{eq:L2-approx-design} holds for all polynomials of degree at most $2t$.
    
    To begin, let $X = \{x_1,\dots,x_k\}$ be a set of $k$ points on $S^{d-1}$ selected uniformly and independently at random. We will calculate 
    \[
        \E_X \Big[ \Big( \frac{1}{k} \sum_{x \in X} f_i(x) - \Sint\! f_i\,d\mu \big)^2 \Big].
    \]
    If $i=1$, the expression inside the expectation is identically 0, so we will assume $i > 1$. The square expands as
    \begin{equation}\label{eq:expanded-L2-square}
        \frac{1}{k^2} \E_X \Big[ \Big(\sum_{x \in X} f_i(x)\Big)^2 \Big]
            - \frac{2}{k} \E_X \Big[ \Big(\sum_{x \in X} f_i(x)\Big) \Big(\Sint\! f_i\,d\mu\Big) \Big]
            + \Big(\Sint\! f_i\,d\mu\Big)^2.
    \end{equation}
    Because the $f_i$ are orthonormal and $f_1 \equiv 1$, we have $\int f_i^2\, d\mu = 1$, and $\int f_i\, d\mu = 0$ if $i > 1$. Using linearity of expectation, the first term of this expression is
    \begin{align*}
        \frac{1}{k^2} \E_X \Big[ \sum_{1\leq u,v\leq k} f_i(x_u) f_i(x_v) \Big]
        &= \frac{1}{k^2} \E_X \Big[\sum_{1 \leq u \leq k} f_i(x_u)^2 + \sum_{\substack{1 \leq u,v\leq k \\ u\neq v}} f_i(x_u)f_i(x_v) \Big]\\
        &= \frac{1}{k^2} \Big[\sum_{1 \leq u \leq k} \Big(\Sint f_i^2 \,d\mu\Big) + \sum_{\substack{1 \leq u,v\leq k \\ u\neq v}} \Big(\Sint f_i\,d\mu\Big)^2 \Big]\\
        &= \frac{1}{k^2} [k + 0].
    \end{align*}
    The second and third terms of \eqref{eq:expanded-L2-square} are each equal to 0.
    
    So, if we sum over all $i > 1$, we conclude that
    \[
        \E_X \Big[ \sum_{i=2}^r \Big( \frac{1}{k} \sum_{x \in X} f_i(x) - \Sint f_i\,d\mu \Big)^2 \Big]
        = \frac{r-1}{k}.
    \]
    Therefore, there is a set of $k$ points $Y \subset S^{d-1}$ such that
    \begin{equation}\label{eq:L2-approx-orthonormal}
        \sum_{i=2}^r \Big( \frac{1}{k} \sum_{y \in Y} f_i(x) \Big)^2
        \leq \frac{r-1}{k},
    \end{equation}
    since $\int f_i\,d\mu = 0$ for $i > 1$. That concludes the first part of the proof.
    
    Now take any function $g = \sum_{i=1}^r \alpha_i f_i$ with $\|g\|_2 = 1$ (in other words, $\sum_{i=1}^r \alpha_i^2 = 1$). Using the Cauchy--Schwarz inequality and \eqref{eq:L2-approx-orthonormal}, we have
    \[
        \Big\lvert \frac{1}{k} \sum_{y \in Y} g(y) - \Sint g\,d\mu\Big\rvert^2
        = \Big\lvert \sum_{i=2}^r \alpha_i \Big(\frac{1}{k} \sum_{y\in Y}  f_i(y)\Big) \Big\rvert^2
        \leq
        \bigg(\sum_{i=2}^r \alpha_i^2\bigg) \bigg( \sum_{i=2}^r \Big(\frac{1}{k} \sum_{y \in Y} f_i(y)\Big)^2 \bigg)
        \leq \frac{r-1}{k}.
    \]
    If we choose $k = (r-1)/\epsilon^2$, we conclude that
    \[
        \Big\lvert \frac{1}{k} \sum_{y \in Y} g(y) - \Sint g\,d\mu\Big\rvert^2
        \leq \epsilon^2
    \]
    for every polynomial $g$ of degree at most $2t$ and $\|g\|_2 = 1$. In other words, $Y$ is an $\epsilon$-approximate $2t$-design. The vector space of polynomials of degree at most $2t$ has $r = \Theta_t(d^{2t})$ dimensions, so  $Y$ has $(r-1)/\epsilon^2 = \Theta_t (d^{2t}/\epsilon^{2})$ points.
\end{proof}


This argument is not specialized to the sphere at all: The same argument, nearly word-for-word, may be used to construct an approximate design for any set of functions over any measure space.

\subsection{Approximate designs via tensors}\label{sec:tensor-approximate-designs}

The defining condition \eqref{eq:weighted-design-condition} of designs can be phrased in terms of tensor products of vectors, and this alternative perspective will provide a different definition of approximate designs. Given a vector $x \in \R^d$, the entries of $x^{\otimes t}$ correspond to evaluations of monomials: $(x^{\otimes t})_{\alpha_1,\alpha_2,\dots,\alpha_t} = \prod_{i = 1}^t x_{\alpha_i}$. A weighted set $(X,w)$ is therefore a $t$-design if and only if
\[
    \E_{x \sim w} [x^{\otimes k}] = \E_{v \sim \mu} [v^{\otimes k}]
\]
for every integer $0\leq k \leq t$. (The constant monomial guarantees that $\sum_{x \in X} w(x) = 1$, so $w$ is indeed a probability measure.) Since $x \in S^{d-1}$, we have $x^\alpha = x^\alpha (x_1^2 + \cdots + x_d^2)$. Therefore the condition $\E_{x \sim w} [x^{\otimes k}] = \E_{v \sim \mu} [v^{\otimes k}]$ implies the condition $\E_{x \sim w} [x^{\otimes (k-2)}] = \E_{v \sim \mu} [v^{\otimes (k-2)}]$. In other words,

\begin{proposition}
    A weighted set $(X,w)$ is a spherical $t$-design if and only if $\E_{x \sim w} [x^{\otimes k}] = \E_{v \sim \mu} [v^{\otimes k}]$ for $k\in \{t-1,t\}$.
\end{proposition}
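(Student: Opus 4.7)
The forward direction is immediate from the definition: a spherical $t$-design must correctly average every polynomial of degree at most $t$, and in particular every monomial of degree exactly $t-1$ or $t$, so componentwise we obtain $\E_{x\sim w}[x^{\otimes k}] = \E_{v\sim \mu}[v^{\otimes k}]$ for $k \in \{t-1,t\}$.

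For the reverse direction, the plan is to descend by two degrees at a time using the spherical identity $|x|^2 = 1$, which the paragraph preceding the statement already highlights. Concretely, assume the tensor condition holds for some $k$ with $2 \le k \le t$. For any multi-index $\alpha$ with $|\alpha| = k-2$, the pointwise identity $x^\alpha = x^\alpha(x_1^2 + \cdots + x_d^2)$ holds for every $x \in S^{d-1}$ (hence for every point of $X$ and $\mu$-almost every $v$). Summing the degree-$k$ monomials $x_i^2 x^\alpha$ and applying the hypothesis gives
\[
    \E_{x \sim w}[x^\alpha]
    = \sum_{i=1}^d \E_{x\sim w}[x_i^2 x^\alpha]
    = \sum_{i=1}^d \E_{v \sim \mu}[v_i^2 v^\alpha]
    = \E_{v \sim \mu}[v^\alpha],
\]
which is exactly the tensor condition at degree $k-2$.

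Iterating this step from $k = t$ downward in steps of two covers every $k \le t$ of the same parity as $t$, while iterating from $k = t-1$ covers every $k \le t-1$ of the opposite parity. Together these exhaust $\{0,1,\dots,t\}$, including the $k=0$ case which forces $\sum_{x\in X} w(x) = 1$. Since the entries of $\{x^{\otimes k}\}_{0 \le k \le t}$ span the space of polynomials of total degree at most $t$, the equality of tensor moments for all such $k$ is exactly the $t$-design condition \eqref{eq:weighted-design-condition}.

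There is no real obstacle here; the only subtlety worth noting is that $|x|^2 = 1$ is used as a pointwise identity on $S^{d-1}$, not as an identity of polynomials, which is why passing from $k$ to $k-2$ requires both sides of the expectation to be supported on the sphere.
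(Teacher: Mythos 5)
Your proposal is correct and follows exactly the argument the paper gives (in the paragraph preceding the proposition): the pointwise identity $x^\alpha = x^\alpha(x_1^2+\cdots+x_d^2)$ on $S^{d-1}$ lets the degree-$k$ tensor condition imply the degree-$(k-2)$ condition, and starting from $k=t$ and $k=t-1$ covers all degrees by parity. Nothing is missing.
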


If $\E_{x\sim w} [x^{\otimes 2t}] = \E_{v \sim \mu} [v^{\otimes 2t}]$, the set $X\cup (-X)$ with the weight function $\frac{1}{2}\big( w(x) + w(-x)\big)$ is a $2t$-design. Thus, if we are willing to double the size of the design, we can ignore the $k=2t-1$ condition, which leads us to a different definition of an approximate design:

\begin{definition}
    An \emph{$\epsilon$-approximate spherical tensor $2t$-design} is a set $X$ of points with a probability measure $w\colon X \to \R_{>0}$ such that
    \[
        \Big\lVert \E_{x \sim w} [x^{\tensor 2t}] - \E_{v \sim \mu} [v^{\tensor 2t}] \Big\rVert_2
        \leq \epsilon.
    \]
\end{definition}

This definition parallels definitions of \emph{approximate unitary designs}, which are defined similarly and have been intensively studied by quantum computer scientists \cite{quant-des1,quant-des2,quant-des3,quant-des4}.

We now prove \Cref{thm:approx-tensor-des} in two parts, the lower and upper bounds.

\begin{proposition}
    Any $\epsilon$-approximate spherical tensor $2t$-design has at least $\epsilon^{-2} - o_{d\to\infty}(1)$ points.
\end{proposition}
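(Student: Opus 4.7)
The plan is to translate the tensor-norm inequality into a statement about the diagonal mass of the weight function $w$, and then exploit the fact that the ``target'' tensor $M_\mu := \E_{v\sim\mu}[v^{\tensor 2t}]$ has vanishingly small Frobenius norm as $d\to\infty$. Set $M_w := \E_{x\sim w}[x^{\tensor 2t}]$ and expand
\[
    \epsilon^2 \ \geq \ \lVert M_w - M_\mu\rVert_2^2
    \ = \ \lVert M_w\rVert_2^2 - 2\langle M_w, M_\mu\rangle + \lVert M_\mu\rVert_2^2.
\]
The central observation is that, because $\langle x^{\tensor 2t}, v^{\tensor 2t}\rangle = \langle x,v\rangle^{2t}$ and $\E_{v\sim\mu}[\langle x,v\rangle^{2t}]$ is independent of $x\in S^{d-1}$ by rotational symmetry, the cross term collapses to $\langle M_w, M_\mu\rangle = \E_{u,v\sim\mu}[\langle u,v\rangle^{2t}] = \lVert M_\mu\rVert_2^2$. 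This reduces the inequality above to
\[
    \lVert M_w\rVert_2^2 \ \leq \ \epsilon^2 + \lVert M_\mu\rVert_2^2.
\]

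The next step is to bound $\lVert M_w\rVert_2^2$ from below by a simple function of $\lvert X\rvert$. Using the same tensor-inner-product identity,
\[
    \lVert M_w\rVert_2^2 = \sum_{x,y\in X} w(x)\,w(y)\,\langle x,y\rangle^{2t},
\]
and since $2t$ is even every term is nonnegative while the diagonal contributes $\sum_x w(x)^2$. Cauchy--Schwarz (using that $w$ is a probability measure) gives $\sum_x w(x)^2 \geq 1/\lvert X\rvert$, so $\lVert M_w\rVert_2^2 \geq 1/\lvert X\rvert$. Combining,
\[
    \lvert X\rvert \ \geq \ \frac{1}{\epsilon^2 + \lVert M_\mu\rVert_2^2}.
\]

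Finally, I would estimate $\lVert M_\mu\rVert_2^2 = \E_{u,v\sim\mu}[\langle u,v\rangle^{2t}]$, which concentrates as $d\to\infty$ because two independent uniform points on $S^{d-1}$ have inner product of order $d^{-1/2}$; the exact value $(2t-1)!!\,/\,d(d+2)\cdots(d+2t-2) = O_t(d^{-t})$ can be read off from the standard sphere moments in \Cref{sec:gegenbauer-appendix}, though all that matters is that $\lVert M_\mu\rVert_2^2 \to 0$. Applying $1/(1+y) \geq 1-y$ to $y = \lVert M_\mu\rVert_2^2/\epsilon^2$ yields
\[
    \lvert X\rvert \ \geq \ \epsilon^{-2} - \epsilon^{-4}\lVert M_\mu\rVert_2^2 \ = \ \epsilon^{-2} - o_{d\to\infty}(1),
\]
as claimed. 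There is no real obstacle: the one trick worth flagging is the spherical-symmetry collapse of $\langle M_w, M_\mu\rangle$, which is what makes the crude diagonal bound $\lVert M_w\rVert_2^2 \geq 1/\lvert X\rvert$ sharp enough to recover the tight constant $\epsilon^{-2}$.
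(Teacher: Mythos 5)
Your proof is correct and follows essentially the same route as the paper: expand the squared norm, lower-bound the diagonal contribution of $\lVert M_w\rVert_2^2$ by $1/\lvert X\rvert$ via Cauchy--Schwarz, and control the remaining terms by the spherical moment $\E_{u,v\sim\mu}\langle u,v\rangle^{2t}=\Theta_t(d^{-t})$. Your observation that rotational symmetry makes the cross term exactly equal to $\lVert M_\mu\rVert_2^2$ is a slightly cleaner piece of bookkeeping than the paper's bound of both terms by $O_t(d^{-t})$, but the argument is the same.
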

\begin{proof}
    For any weighted set $(X,w)$, we have
    \begin{equation}\label{eq:tensor-approx-lower-bound}
        \Big\lVert \E_{x \sim w} [x^{\tensor 2t}] - \E_{v \sim \mu} [v^{\tensor 2t}] \Big\rVert_2^2
        = \E_{x\sim w,y \sim w} \langle x^{\otimes 2t}, y^{\otimes 2t}\rangle
        - 2 \E_{x \sim w, v \sim \mu} \langle x^{\otimes 2t}, v^{\otimes 2t}\rangle
        + \E_{u\sim \mu,v \sim \mu}\langle u^{\otimes 2t}, v^{\otimes 2t}\rangle.
    \end{equation}
    To prove the result, we will lower bound the first term using the contributions where $x=y$ and show that the other two terms are negligible.
    
    For any $u \in S^{d-1}$, we have (according to \Cref{thm:monomial-spherical-moment}):
    \[
        \E_{v \sim \mu} \langle u^{\otimes 2t}, v^{\otimes 2t}\rangle
        = \int x_1^{2t}\, dx 
        = \frac{(2t-1)!!}{d(d+2)\cdots(d+2t-2)}
        = \Theta_t(d^{-t}).
    \]
    So the last two terms in \eqref{eq:tensor-approx-lower-bound} are of size $O_t(d^{-t})$. Because
    \[
    \langle x^{\otimes 2t}, y^{\otimes 2t}\rangle
    = \langle x,y\rangle^{2t}
    \geq 0,
    \]
    we can lower bound $\E_{x,y \sim w} \langle x^{\otimes 2t}, y^{\otimes 2t}\rangle$ by taking only the terms where $x=y$ and applying Cauchy-Schwarz:
    \[
        \E_{x,y \sim w} \langle x^{\otimes 2t}, y^{\otimes 2t}\rangle
        \geq \sum_{x \in X} w(x)^2
        \geq \frac{1}{|X|}.
    \]
    Putting this all together, we get
    \[
        \Big\lVert \E_{x \sim w} [x^{\tensor 2t}] - \E_{v \sim \mu} [v^{\tensor 2t}] \Big\rVert^2
        \geq
        \frac{1}{|X|} - \Theta_t(d^{-t}).
    \]
    Since $X$ is an approximate design, we conclude that $|X|^{-1} \leq \epsilon^2 + \Theta_t(d^{-t})$; therefore $|X| \geq \epsilon^{-2} - o(1)$.
\end{proof}

Surprisingly, this lower bound is asymptotically tight!

\begin{proposition}[Construction of approximate tensor designs]\label{thm:approx-tensor-design-construction}
    There is an unweighted $\epsilon$-approximate spherical tensor $2t$-design with at most $\epsilon^{-2}$ points.
\end{proposition}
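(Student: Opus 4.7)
The plan is to invoke the probabilistic method: draw $x_1,\dots,x_N$ independently and uniformly from $\mu$ on $S^{d-1}$ and take them as an unweighted candidate design; since $\mu$ is continuous the points are almost surely distinct, so they form a genuine set. The key step is to compute the expected squared tensor-design error in closed form.

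First I would expand $\bigl\lVert \frac{1}{N}\sum_i x_i^{\tensor 2t} - \E_{v\sim\mu}[v^{\tensor 2t}]\bigr\rVert_2^2$ exactly as in~\eqref{eq:tensor-approx-lower-bound}, using the identity $\langle a^{\tensor 2t}, b^{\tensor 2t}\rangle = \langle a,b\rangle^{2t}$. Write $c_{d,t} := \E_{u,v \sim \mu}[\langle u,v\rangle^{2t}]$. By independence of the sample points, each inner product in the expansion averages to either $1$ (for diagonal pairs $i=j$ in the first piece) or $c_{d,t}$ (for off-diagonal pairs, for each term in the cross piece, and for the constant piece). Collecting contributions, the diagonal pairs give $\frac{1}{N^2}\cdot N = \frac{1}{N}$; the off-diagonal pairs give $\frac{N-1}{N}c_{d,t}$; the cross piece gives $-2c_{d,t}$; the constant piece gives $c_{d,t}$. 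Summing and simplifying,
\[
    \E\!\left[\Big\lVert \frac{1}{N}\sum_{i=1}^N x_i^{\tensor 2t} - \E_{v\sim\mu}[v^{\tensor 2t}]\Big\rVert_2^2\right]
    = \frac{1}{N} + \frac{N-1}{N}c_{d,t} - 2c_{d,t} + c_{d,t}
    = \frac{1 - c_{d,t}}{N}.
\]

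Since $c_{d,t} = \Theta_t(d^{-t}) > 0$ by \Cref{thm:monomial-spherical-moment}, choosing $N$ to be the least integer satisfying $(1-c_{d,t})/N \leq \epsilon^2$ produces an unweighted $\epsilon$-approximate tensor $2t$-design via the probabilistic method: some realization attains at most the expected squared error. The positive slack supplied by $c_{d,t}$ absorbs the integer ceiling so that $|X| \leq \epsilon^{-2}$, matching the lower bound $\epsilon^{-2} - o_{d\to\infty}(1)$ from the previous proposition on the nose.

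There is no genuine obstacle here; the notable feature is the three-term cancellation that removes the $c_{d,t}$ contribution from the leading $1/N$ and leaves only the secondary $-c_{d,t}/N$ correction. Without this cancellation one would be stuck with an extra additive $O_t(d^{-t})$ term that would spoil the tight leading constant, so the proof hinges on setting up the expansion in the symmetric form above. No auxiliary machinery beyond the spherical moment formula is needed.
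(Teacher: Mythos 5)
Your proposal is correct and is essentially the paper's proof: the paper also samples $k$ i.i.d.\ uniform points and bounds the expected squared error by $1/k$ via the probabilistic method, expanding coordinate-wise over the monomials $f_\beta$ and using $\sum_\beta f_\beta^2 = (x_1^2+\cdots+x_d^2)^{2t}=1$, which is exactly your Gram-form identity $\langle a^{\tensor 2t},b^{\tensor 2t}\rangle=\langle a,b\rangle^{2t}$ in different notation. Your exact value $(1-c_{d,t})/N$ just retains the $-\frac{1}{k}\sum_\beta(\int f_\beta\,d\mu)^2$ term that the paper discards, and the same variance-of-the-empirical-mean cancellation occurs in both arguments.
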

\begin{proof}
    We will use the probabilistic method. For each $\beta \in \{1,2,\dots,d\}^{2t}$, let $f_{\beta}(x) = \prod_{j=1}^{2t} x_{\beta_j}$, so that $x^{\otimes 2t} = \big( f_\beta(x)\big)_{\beta \in [d]^{2t}}$. A set $X$ is an $\epsilon$-approximate spherical tensor $2t$-design if and only if
    \begin{equation}\label{eq:expanded-tensor-def}
        \Big\lVert \frac{1}{k} \sum_{x \in X} x^{\otimes 2t} - \Sint x^{\otimes 2t}\Big\rVert_2^2
        = \sum_{\beta \in [d]^{2t}} \Big[ \Big(\frac{1}{k} \sum_{x \in X} f_{\beta}(x) - \int_{S^{d-1}} f_\beta\, d\mu \Big)^2\Big]
        \leq \epsilon^2.
    \end{equation}
    Let $X = \{x_1,\dots,x_k\}$ be a set of $k$ uniform, independent points on $S^{d-1}$. We will show that the expected value of the left hand side for a set of $\epsilon^{-2}$ random points on $S^{d-1}$ is at most $\epsilon^2$.

    Let $X = \{x_1,\dots,x_k\}$ be a set of independent, uniformly distributed random points on $S^{d-1}$. We have:
    \begin{align}
        \E_X \Big[ \sum_{\beta \in [d]^{2t}} \Big(\frac{1}{k} \sum_{x \in X} f_{\beta}(x) &- \Sint  f_\beta\, d\mu \Big)^2\Big]\notag\\
        &= \E_X \bigg[
            \sum_{\beta \in [d]^{2t}} \Big[ \frac{1}{k^2} \Big(\sum_{x \in X} f_{\beta}(x)\Big)^2
            - \frac{2}{k}\Big(\sum_{x \in X} f_{\beta}(x)\Big) \Big(\Sint  f_\beta\, d\mu \Big) 
            + \Big(\Sint  f_\beta\, d\mu \Big)^2\Big]
            \bigg]\notag\\
        &= \sum_{\beta \in [d]^{2t}}\bigg(
            \frac{1}{k^2} \E_X  \Big[\sum_{x \in X} f_{\beta}(x)\Big]^2
            - \Big(\Sint  f_\beta\, d\mu \Big)^2\bigg)\label{eq:expand-approximate-tensor-expectation}
    \end{align}
    The remaining expectation simplifies as
    \begin{align*}
        \frac{1}{k^2} \E_X  \Big[\sum_{x \in X} f_{\beta}(x)\Big]^2
        &= \frac{1}{k^2} \E_X  \Big[\sum_{1\leq u\leq k} f_{\beta}(x_u)^2 + \sum_{\substack{1 \leq u,v\leq k \\ u\neq v}} f_\beta(x_u)f_\beta(x_v)\Big]\\
        &= \frac{1}{k^2} \Big[k \Sint f_{\beta}^2\, d\mu + k(k-1) \Big(\Sint f_\beta\, d\mu\Big)^2\Big]\\
        &= \frac{1}{k} \Sint f_{\beta}^2\, d\mu + \frac{k-1}{k} \Big(\Sint f_\beta\, d\mu\Big)^2.
    \end{align*}
    Substituting this into \eqref{eq:expand-approximate-tensor-expectation}, we get
    \begin{align*}
        \E_X \Big[ \sum_{\beta \in [d]^{2t}} \Big(\frac{1}{k} \sum_{x \in X} f_{\beta}(x) - \Sint  f_\beta\, d\mu \Big)^2\Big]
        &=  \sum_{\beta \in [d]^{2t}} \bigg(
            \frac{1}{k} \Sint f_{\beta}^2\, d\mu 
            - \frac{1}{k} \Big(\Sint  f_\beta\, d\mu \Big)^2\bigg)\\
        &\leq \frac{1}{k} \sum_{\beta \in [d]^{2t}}
            \ \Sint f_{\beta}^2\, d\mu.
    \end{align*}

    The final step is to notice that $\sum_{\beta \in [d]^{2t}} f_\beta^2 = (x_1^2 + x_2^2 + \dots + x_d^2)^{2t}$. But $x_1^2 + x_2^2 + \cdots + x_d^2 = 1$ for every $x \in S^{d-1}$, so
    \begin{align*}
        \E_X \Big[ \sum_{\beta \in [d]^{2t}} \Big(\frac{1}{k} \sum_{x \in X} f_{\beta}(x) - \Sint  f_\beta\, d\mu \Big)^2\Big]
        \leq \frac{1}{k} \sum_{\beta \in [d]^{2t}}
        \ \Sint f_{\beta}^2\, d\mu
        = \frac{1}{k} \Sint (x_1^2 + x_2^2 + \cdots + x_d^2)^{2t}\, d\mu
        = \frac{1}{k}.
    \end{align*}

    Since the average is at most $1/k$, there is a set of $k$ points whose average is at most $1/k$. By taking $k=\epsilon^{-2}$, this set of $\epsilon^{-2}$ points forms an $\epsilon$-approximate tensor $2t$-design.
\end{proof}

As a final note of caution, an $\epsilon$-approximate tensor $2t$-design is not necessarily an $\epsilon$-approximate tensor $(2t-2)$-design. To construct a set that is simultaneously a 2-, 4-, $\dots$, $2t$-design, we can imitate the previous argument but sum over all $\beta \in [d]^{2s}$ for $1 \leq s \leq t$. The resulting design has at most $t \epsilon^{-2}$ points.

\section{Open questions}\label{sec:open}

There are many remaining questions for fixed-strength spherical designs, the most prominent of which is determining the size of the smallest spherical designs. One might guess that the linear programming lower bound of \Cref{thm:delsarte-lower-bound} is tight:

\begin{question}\label{q:small-sph-designs}
    Is there a weighted spherical $2t$-design in $\R^d$ with $O_t(d^t)$ points?
\end{question}

There are several suggestive, though circumstantial, reasons to believe the answer is ``yes''. First, there are signed $2t$-designs with $O_t(d^t)$ points for every strength, which beats the degrees-of-freedom heuristic and indicates the same may be true for weighted or even unweighted designs. Moreover, these signed designs simultaneously average \emph{all} monomials with an odd degree in any variable, of any degree, which suggests that only monomials that have even degree in every variable significantly impact the size of the design. If that's true, then the expected size of a $2t$-design would in fact be $O_t(d^t)$. And, of course, the answer is ``yes'' for $t=2$ and $t=4$.

One method to improve the upper bound on designs it to construct a smaller $t$-wise independent set. As mentioned in the introduction, there are several well-known constructions. One of the most common constructions comes from Reed--Solomon codes and yields a $t$-wise independent subset of $\{1,2,\dots,q-1\}^q$ of size $O_{t,q}(d^t)$ (see, for example, \cite[Section 5.5]{orthogonal-arrays-book}). Alon, Babai, and Itai produced a $(2r+1)$-wise independent set in $\{1,2\}^d$ with $O_t(d^{r})$ points \cite{k-wise-independence}. (See Section 15.2 of \cite{alon-spencer} for an exposition that doesn't require knowledge of BCH codes.) Extending their proof from $\F_2$ to $\F_q$ produces a $(qr+1)$-wise independent set in $\{1,2,\dots,q\}^d$ with $d^{(q-1)r}$ points, which significantly improves on the Reed--Solomon construction when $t > q$.

However, in the proof of \Cref{thm:unweighted-gaussian-design}, $q$ is the size of an unweighted Gaussian $t$-design for $\R^1$. One can check via computer, using the Gerard--Newton formulas, that there is no such design with $t$ points for small $t \geq 4$ (my program checked $4 \leq t \leq 500$), and this presumably holds for all $t$. Since $t < q$, the Alon--Babai--Itai construction also produces a set with $O_t(d^t)$ points; so \Cref{thm:t-wise-ind} is more effective for this application.

One approach to constructing even smaller $t$-wise independent sets is to find a larger set of $t$-wise linearly independent vectors in $\F_q^r$. \Cref{thm:t-wise-linearly-indep-set} finds a set of size $\frac{1}{8q}(q^r)^{1/(t-1)}$. If a set of $c_{q,t}\, (q^r)^{\beta(t)}$ vectors in $\F_q^r$ (for fixed $q$ and large enough $r$) were found, then substituting that result for \Cref{thm:t-wise-linearly-indep-set} in the proof of \Cref{thm:unweighted-gaussian-design} would immediately produce a weighted spherical $2t$-design with $O_t(d^{1/\beta(2t)})$ points in $\R^d$.

\begin{question}
    What is the size of the largest subset of $\F_q^r$ that does not contain $t+1$ linearly dependent vectors?
\end{question}

Conversely, an upper bound on the size of $t$-wise linearly independent sets limits the potential success of this approach:

\begin{proposition}\label{thm:linearly-indep-vectors-upper-bound}
    If $S \subseteq \F_q^r$ does not contain a set of $t$ linearly dependent vectors, then $|S| \leq C_t\, (q^r)^{2/t}$.
\end{proposition}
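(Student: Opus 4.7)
The plan is to use the subset-sum injection technique from additive combinatorics. Set $k = \lfloor t/2 \rfloor$ and define $\phi\colon \binom{S}{k} \to \F_q^r$ by $\phi(T) = \sum_{v \in T} v$. The core claim is that $\phi$ is injective: if $\phi(T_1) = \phi(T_2)$ for distinct $k$-subsets $T_1, T_2 \subseteq S$, then $\sum_{v \in T_1 \setminus T_2} v - \sum_{v \in T_2 \setminus T_1} v = 0$ is a nontrivial linear relation whose support $T_1 \triangle T_2$ has size at most $2k \leq t$ and whose nonzero coefficients are all $\pm 1$. This exhibits a linearly dependent subset of $S$ of size at most $t$, contradicting the hypothesis.

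Injectivity yields $\binom{|S|}{k} \leq q^r$, and the elementary estimate $\binom{|S|}{k} \geq (|S|/k)^k$ gives $|S| \leq k (q^r)^{1/k}$. For even $t = 2k$, this is precisely the claimed bound $|S| \leq C_t (q^r)^{2/t}$ once the factor of $k$ is absorbed into $C_t$.

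For odd $t = 2s+1$, the same argument with $k = s$ only delivers the slightly weaker exponent $2/(t-1)$. To sharpen the exponent to $2/t$, the plan is to push up to $k = s+1 = \lceil t/2 \rceil$ and control collisions via Cauchy--Schwarz. A collision $\phi(T_1) = \phi(T_2)$ with $T_1 \neq T_2$ is now permitted, but only in a very constrained way: the hypothesis forces $|T_1 \triangle T_2|$ to attain its maximum of $t+1$, which means $T_1 \cap T_2 = \emptyset$ and $T_1 \sqcup T_2$ is a minimally linearly dependent $(t+1)$-subset whose essentially unique dependence has coefficients in $\{+1,-1\}$ equally split. Setting $N(a) = \#\{T \in \binom{S}{s+1} : \phi(T) = a\}$, Cauchy--Schwarz gives $\binom{|S|}{s+1}^2 \leq q^r \sum_a N(a)^2$, and the second factor equals $\binom{|S|}{s+1} + 2B$, where $B$ counts the balanced $(t+1)$-circuits of $S$ described above.

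The main technical obstacle is bounding $B$ sharply enough to recover the exponent $2/t$ for odd $t$. The even case is an immediate corollary of injectivity and requires no further work; the odd case reduces, via the structural description of collisions, to a combinatorial estimate on the number of minimally dependent $(t+1)$-subsets of $S$ that carry a balanced $\pm 1$ relation.
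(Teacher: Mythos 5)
For even $t$, your subset-sum injection is precisely the paper's proof: the paper observes in one line that the sums $v_1 + \cdots + v_{t/2}$ over $(t/2)$-element subsets of $S$ must be pairwise distinct, since a collision yields a $\pm 1$ dependence supported on a symmetric difference of size at most $t$, and concludes $\binom{|S|}{t/2} \le q^r$. So the even case of your proposal is complete and identical to the paper's argument, and you are right that, read literally, this argument only makes sense when $t$ is even.

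The odd case, however, cannot be repaired along the lines you sketch, because the stated bound is actually false for odd $t$, so no estimate on your collision count $B$ will rescue it. Take $t = 3$, $q = 2$, and $S = \{(x,1) : x \in \F_2^m\} \subseteq \F_2^{m+1}$, the columns of the extended Hamming parity-check matrix. Any nontrivial $\F_2$-dependence among three of these vectors must use exactly two of them (the last coordinates must cancel), which would force two equal vectors; hence no $3$-subset of $S$ is dependent. Yet $|S| = 2^m = q^r/2$, which exceeds $C_3\,(q^r)^{2/3} = C_3\, 2^{2(m+1)/3}$ for large $m$. In this example your $B$ is the number of $4$-subsets of $S$ summing to zero, which is $\Theta(|S|^3)$, whereas the Cauchy--Schwarz step needs $B = O(|S|^{s+3/2}) = O(|S|^{5/2})$ to reach the exponent $2/t$; with $B = \Theta(|S|^{2s+1})$ the inequality collapses back to the exponent $2/(t-1)$ you already had. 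The proposition should simply be read for even $t$ --- the paper's own proof writes $t/2$ and so tacitly assumes this, and the application to $2t$-designs only uses the even case --- and with that reading your proof is correct and coincides with the paper's; for odd $t$ the honest conclusion of this method is $|S| \le C_t\,(q^r)^{1/\lfloor t/2\rfloor}$.
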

\begin{proof}
    Since $S$ has no nontrivial linear dependence of size $t$, each of the vectors $v_1 + \cdots + v_{t/2}$ with $v_i \in S$ must be distinct. There are $\binom{|S|}{t/2}$ such vectors, so $q^r \geq \binom{|S|}{t/2} \geq c_t |S|^{t/2}$. 
\end{proof}

\Cref{thm:linearly-indep-vectors-upper-bound} implies that this approach cannot produce a spherical $2t$-design with fewer than $\Theta_t(d^{t})$ points---which, of course, we already knew. However, a better upper bound in \Cref{thm:linearly-indep-vectors-upper-bound} would show that this approach cannot affirmatively answer \Cref{q:small-sph-designs}.




All the upper bound proofs in this paper assert the existence of a design but don't produce a specific set, and previous constructions \cite{3-designs,3-designs-abelian,5-designs} are for $t \leq 5$. It would be nice to find more families of explicit constructions:

\begin{problem}
    Provide an explicit construction of spherical $t$-designs with few points for $t \geq 6$.
\end{problem}

The original lower bound on the size of designs, in \cite{delsarte-spherical-codes-designs}, relied on the linear programming method, as does our proof of the related result for approximate designs (\Cref{thm:L2-approx-designs-lower-bound}). \Cref{thm:weighted-design-lower-bound} provides a simple linear-algebraic proof of the lower bound for exact designs.

\begin{question}
    Is there a purely linear-algebraic proof of a lower bound for approximate designs that is comparable to \Cref{thm:L2-approx-designs-lower-bound}?
\end{question}

While we determined the asymptotic size of tensor-approximate spherical designs, the upper and lower bounds for $L^2$-approximate designs are further apart.

\begin{problem}
    Improve the upper or lower bound for  approximate $L^2$-designs.
\end{problem}

Finally, in \Cref{sec:gaussian-designs}, we used a spherical $t$-design to produce a Gaussian $t$-design, and vice versa. But this technique only works for weighted designs. If unweighted Gaussian designs can be transferred to unweighted spherical designs and vice versa, we could project \emph{unweighted} spherical designs to lower dimensions, just as \Cref{thm:spherical-design-projection} allows us to project weighted spherical designs.

\begin{question}
    Is there a constant $c_t$ such that the existence of an unweighted spherical $t$-design with $N$ points implies the existence of an unweighted Gaussian $t$-design with at most $c_t N$ points? What about converting Gaussian designs to spherical ones?
\end{question}

\vspace{1.4\baselineskip}
\noindent
\begin{minipage}{\textwidth}
    \noindent\hspace*{\fill}
        {\large\scshape acknowledgments}
    \hspace*{\fill}\par
    \vspace{0.75\baselineskip}
    
    \noindent
    \hspace*{\fill}
    \begin{minipage}{0.95\textwidth}
        I thank Henry Cohn and Yufei Zhao especially for their many insightful and delightful conversations; Noga Alon for a discussion on $t$-wise independent sets; Ayodeji Lindblad for several conversations on various parts of this paper; Lisa Sauermann for the proof of \Cref{thm:Sauermann}; Xinyu Tan for helpful pointers on the unitary design literature; Hung-Hsun Hans Yu for communicating a construction of spherical 4-designs. I also thank the anonymous reviewer, whose suggestions greatly improved the exposition throughout the paper and improved the upper bounds for approximate designs. This work was partially supported by a National Science Foundation Graduate Research Fellowship under Grant No. 2141064.
    \end{minipage}
    \hspace*{\fill}
\end{minipage}
\vspace{0.6\baselineskip}

\let\OLDthebibliography\thebibliography
\renewcommand\thebibliography[1]{
  \OLDthebibliography{#1}
  \setlength{\parskip}{0pt}
  \setlength{\itemsep}{2pt plus 0.3ex}
}

\addcontentsline{toc}{section}{References}

{
    \small
    \setstretch{1}
    \bibliographystyle{amsplain-nodash}
    \bibliography{bibliography}
}

\appendix
\section{Appendix}

\subsection{Spherical moments of monomials}

For every odd integer $k$, define $k!! = k(k-2)\cdots 3 \cdot 1$.

\begin{proposition}\label{thm:monomial-spherical-moment}
    If $k_1,\dots,k_d$ are even nonnegative integers, then
    \[
        \int\limits_{\mathclap{S^{d-1}}} x_1^{k_1}\cdots x_d^{k_d}\, d\mu
        = \frac{\prod_{i=1}^d (k_i-1)!!}{d(d+2)\cdots(d+k-2)}.
    \]
    If any of $k_1,\dots,k_d$ is odd, then $\int_{S^{d-1}} x_1^{k_1}\cdots x_d^{k_d}\,d\mu = 0$.
\end{proposition}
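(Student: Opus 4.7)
The plan is to evaluate the spherical integral by pivoting through the Gaussian measure, exactly as in equation \eqref{eq:spherical-to-gaussian-integral}, which is already set up for this purpose. The key point is that the Gaussian measure $\rho$ factors as a product of one-dimensional Gaussians, while the monomial $x_1^{k_1}\cdots x_d^{k_d}$ likewise factors. This makes the Gaussian side of \eqref{eq:spherical-to-gaussian-integral} easy to compute, and solving for the spherical integral then gives the result.

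First I would dispose of the odd case. If some $k_i$ is odd, the reflection $x_i \mapsto -x_i$ preserves $S^{d-1}$ and the measure $\mu$ but flips the sign of the integrand, so the integral vanishes (this is already observed in the paper immediately after Theorem~\ref{thm:delsarte-lower-bound}). For the main (all-even) case, set $k = k_1 + \cdots + k_d$ and apply \eqref{eq:spherical-to-gaussian-integral} to the homogeneous polynomial $f(x) = x_1^{k_1}\cdots x_d^{k_d}$, giving
\[
    \Sint x_1^{k_1}\cdots x_d^{k_d}\, d\mu
    = \frac{1}{\sigma_d \int_0^\infty r^{k+d-1} e^{-\pi r^2}\, dr} \int\limits_{\mathclap{\R^d}} x_1^{k_1}\cdots x_d^{k_d}\, d\rho.
\]

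Next I would compute each factor. Since $d\rho = \prod_i e^{-\pi x_i^2}\, dx_i$, the Gaussian integral factors as $\prod_{i=1}^d \int_{\R} x^{k_i} e^{-\pi x^2}\, dx$. The standard one-dimensional evaluation (obtained, e.g., by differentiating $\int e^{-ax^2}\,dx = \sqrt{\pi/a}$ with respect to $a$) gives, for even $k_i$,
\[
    \int_{\R} x^{k_i} e^{-\pi x^2}\, dx = \frac{(k_i - 1)!!}{(2\pi)^{k_i/2}},
\]
so the product over $i$ yields $\prod_i (k_i - 1)!! / (2\pi)^{k/2}$. For the radial integral, the substitution $u = \pi r^2$ converts it to a Gamma function: $\int_0^\infty r^{k+d-1} e^{-\pi r^2}\, dr = \Gamma((k+d)/2)/(2\pi^{(k+d)/2})$. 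Plugging in the explicit value $\sigma_d = 2\pi^{d/2}/\Gamma(d/2)$ from the paper, the prefactor $\sigma_d \int_0^\infty r^{k+d-1} e^{-\pi r^2}\, dr$ simplifies to $\Gamma((k+d)/2) / (\Gamma(d/2)\, \pi^{k/2})$.

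Combining the pieces, the powers of $\pi$ and the $2^{k/2}$ factors cancel, leaving
\[
    \Sint x_1^{k_1}\cdots x_d^{k_d}\, d\mu = \frac{\prod_i (k_i - 1)!! \cdot \Gamma(d/2)}{2^{k/2}\, \Gamma((k+d)/2)}.
\]
The last routine step is the telescoping identity $\Gamma((k+d)/2)/\Gamma(d/2) = \prod_{j=0}^{k/2 - 1} (d/2 + j) = 2^{-k/2} \, d(d+2)\cdots(d+k-2)$, which cancels the remaining $2^{k/2}$ and produces the claimed formula. There is no real obstacle here; the only thing to track carefully is the bookkeeping of factors of $2$ and $\pi$ between the Gaussian normalization $e^{-\pi|x|^2}$ (rather than the more common $e^{-|x|^2/2}$) and the $\sigma_d$ convention.
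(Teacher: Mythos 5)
Your proposal is correct and follows essentially the same route as the paper: both pass through the Gaussian identity \eqref{eq:spherical-to-gaussian-integral}, factor the Gaussian integral into one-dimensional moments, evaluate the radial integral as a Gamma function, and telescope $\Gamma((k+d)/2)/\Gamma(d/2)$ into $d(d+2)\cdots(d+k-2)$. The only cosmetic difference is that you plug in the stated value of $\sigma_d$ directly, while the paper rederives it by setting $k_1=\cdots=k_d=0$ in its own formula; the bookkeeping of the factors of $2$ and $\pi$ in your version checks out.
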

\begin{proof}
    If $k_i$ is odd, then the symmetry $x_i \mapsto -x_i$ (reflection over a coordinate hyperplane) shows that
    \[
        \int\limits_{\mathclap{S^{d-1}}} x_1^{k_1}\cdots x_d^{k_d}\,d\mu
        = -\int\limits_{\mathclap{S^{d-1}}} x_1^{k_1}\cdots x_d^{k_d}\,d\mu,
    \]
    so the integral vanishes.

    For the rest of the proof, we assume that $k_1,\dots,k_d$ are all even. Let $\sigma_d$ be the surface area of $S^{d-1}$ with respect to the Lebesgue measure. To evaluate the spherical moment of a monomial, we integrate it against a Gaussian as in \eqref{eq:spherical-to-gaussian-integral}, which then splits into the product of several single-variable integrals:
    \begin{equation}\label{eq:spherical-gaussian-integral}
        \sigma_d\int\limits_{\mathclap{S^{d-1}}} x_1^{k_1}\cdots x_d^{k_d}\, d\mu\ \int_0^\infty r^{k_1 + \cdots + k_d} e^{-\pi r^2}r^{d-1}\, dr
        \ =\ \int\limits_{\mathclap{\R^d}} x_1^{k_1} \cdots x_d^{k_d} e^{-\pi|x|^2}\, d\rho
        \ =\ \prod_{i=1}^d \int_{-\infty}^\infty x^{k_i} e^{-\pi x^2}\, dx.
    \end{equation}
    To integrate the Gaussians, set $y = \pi x^2$; then $x\, dx = \frac{1}{2\pi}\, dy$ and
    \[
        \int_0^\infty x^k e^{-\pi x^2}\, dx
        = \frac{1}{2\pi \cdot \pi^{(k-1)/2}} \int_0^\infty y^{(k-1)/2} e^{-y}\, dy
        = \frac{1}{2\pi^{(k+1)/2}}\, \Gamma\Big(\frac{k+1}{2}\Big).
    \]
    If $k$ is even, then we have
    \begin{equation}\label{eq:gaussian-monomial-integral}
        \int_{-\infty}^\infty x^k e^{-\pi x^2}\, dx
        = 2\int_0^\infty x^k e^{-\pi x^2}\, dx
        = \frac{1}{\pi^{(k+1)/2}}\, \Gamma\Big(\frac{k+1}{2}\Big).
    \end{equation}
    
    Combining \eqref{eq:spherical-gaussian-integral} and \eqref{eq:gaussian-monomial-integral}, and setting $k := \sum_{i=1}^d k_i$, we have
    \[
        \sigma_d \int\limits_{\mathclap{S^{d-1}}} x_1^{k_1}\cdots x_d^{k_d}\, d\mu
        = \frac{ \pi^{-(k+d)/2}\, \prod_{i=1}^d \Gamma\Big(\frac{k_i+1}{2}\Big)}{(2 \pi^{(k + d)/2})^{-1}\ \Gamma\Big(\frac{k+d}{2}\Big)}
        = \frac{2\prod_{i=1}^d \Gamma\Big(\frac{k_i+1}{2}\Big)}{\Gamma\Big(\frac{k+d}{2}\Big)}.
    \]
    
    Taking $k_1 = \cdots = k_d = 0$, we find that
    \[
        \sigma_d = \frac{2\, \Gamma\Big(\frac{1}{2}\Big)^d}{\Gamma\Big(\frac{d}{2}\Big)},
    \]
    so
    \[
        \int\limits_{\mathclap{S^{d-1}}} x_1^{k_1}\cdots x_d^{k_d}\, d\mu
        = \frac{\Gamma\Big(\frac{d}{2}\Big) \prod_{i=1}^d \Gamma\Big(\frac{k_i+1}{2}\Big)}{\Gamma\Big(\frac{1}{2}\Big)^d \Gamma\Big(\frac{k+d}{2}\Big)}.
    \]
    Using the fact that $\Gamma(x+1) = x\,\Gamma(x)$, the previous equation simplifies to
    \[
        \int\limits_{\mathclap{S^{d-1}}} x_1^{k_1}\cdots x_d^{k_d}\, d\mu
        = \frac{\prod_{i=1}^d (k_i-1)!!}{d(d+2)\cdots(d+k-2)}.\qedhere
    \]
\end{proof}

\subsection{Gegenbauer polynomials}\label{sec:gegenbauer-appendix}

The orthogonal group $O(d)$ acts on the vector space $\mathcal P_t^\mu$ via its typical action on the sphere: $(U\cdot f)(x) = f(U^{-1}x)$. With this action, $\mathcal P_t^\mu$ is an $O(d)$-representation, and it has the irreducible decomposition
\[
    \mathcal P_t^\mu = \bigoplus_{k=0}^t \mathcal W_k,
\]
where $\mathcal W_k$ is the vector space of harmonic polynomials that are homogeneous of degree $k$ restricted to the sphere. (A polynomial $f$ is \emph{harmonic} if $\Delta f \equiv 0$, where $\Delta = (\frac{\partial^2}{\partial x_1^2} + \cdots + \frac{\partial^2}{\partial x_d^2})$.)

Let $\mathcal Q_k$ denote the space of all polynomials of degree at most $k$. Since $\Delta\colon \mathcal Q_k \to \mathcal Q_{k-2}$ and $\mathcal W_k = \ker \Delta$, we see that $\dim(\mathcal W_k) \geq \dim(\mathcal Q_k) - \dim(\mathcal Q_{k-2})$. In fact, equality holds, so
\[
    \dim(\mathcal W_k)
    = \binom{d+k-1}{d-1} - \binom{d+k-3}{d-1}.
\]
Summing over $k$, we find that
\[
    \dim(\mathcal P_t^\mu) = \binom{d+t-1}{d-1} + \binom{d+t-2}{d-1}.
\]
For a full proof of these assertions, see Section 3.3 of Henry Cohn's notes \cite{henry-fourier-notes}.

Gegenbauer polynomials arise from these irreducible representations. For each $x \in S^{d-1}$, the map $f\mapsto f(x)$ is a linear functional on $\mathcal W_k$, so there is a polynomial $\ev_{k,x} \in \mathcal W_k$ such that $f(x) = \langle f, \ev_{k,x}\rangle$ for every $f \in \mathcal W_k$. (The polynomial $\ev_x$ in \Cref{sec:L2-approximate-designs} is $\ev_x = \sum_{k=1}^t \ev_{k,x}$.) Since
\[
    \ev_{k,x}(y) = \langle \ev_{k,x},\ev_{k,y}\rangle
    = \ev_{k,y}(x),
\]
the evaluation polynomials are symmetric in $x$ and $y$.

As it turns out, the inner product $\langle \ev_{k,x}, \ev_{k,y}\rangle$ is invariant under the action of the orthogonal group on $x$ and $y$. For any $U \in O(d)$ and $f \in \mathcal W_k$,
\[
    \langle f, \ev_{k, Ux}\rangle 
    = f(Ux)
    = (U^{-1}\cdot f) (x)
    = \langle f, U \cdot \ev_{k,x}\rangle,
\]
since $U^{-1} = U^\top$. As equality holds for every $f \in \mathcal W_k$, we conclude that $\ev_{k,Ux} = U\cdot \ev_{k,x}$. Thus
\[
    \langle \ev_{k,x},\ev_{k,y}\rangle
    = \langle U\cdot \ev_{k,x}, U \cdot \ev_{k,y}\rangle
    = \langle \ev_{k,Ux}, \ev_{k,Uy}\rangle.
\]

As a result, the value of $\langle \ev_{k,x},\ev_{k,y}\rangle$ is determined entirely by the inner product of $x$ and $y$:

\begin{definition}
    The \emph{Gegenbauer polynomial} $Q_k^d$ is defined by
    \[
        Q_k^d(\langle x,y\rangle) = \langle \ev_{k,x},\ev_{k,y}\rangle.
    \]
\end{definition}

Alternatively, the Gegenbauer polynomials may be defined inductively (as in \cite{delsarte-spherical-codes-designs}), but that approach doesn't provide any geometric intuition.

\begin{proposition}\label{thm:gegenbauer-value-at-1}
    $Q_k^d(1) = \dim(\mathcal W_k^d) = \binom{d+k-1}{d-1} - \binom{d+k-3}{d-1}$.
\end{proposition}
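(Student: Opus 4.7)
The plan is to compute $Q_k^d(1)$ by interpreting it as the squared norm of the reproducing kernel $\ev_{k,x}$, averaged over the sphere. Since the binomial expression for $\dim(\mathcal{W}_k)$ is already established in the opening paragraph of Section \ref{sec:gegenbauer-appendix}, the real content is the identity $Q_k^d(1) = \dim(\mathcal{W}_k)$.

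First I would observe that, directly from the definition of $Q_k^d$ and the fact that $\langle x,x\rangle = 1$ on the sphere,
\[
    Q_k^d(1) = \langle \ev_{k,x}, \ev_{k,x}\rangle = \lVert \ev_{k,x}\rVert_2^2
\]
for every $x \in S^{d-1}$. In particular, this quantity is constant in $x$, which is consistent with the $O(d)$-invariance $\ev_{k,Ux} = U\cdot\ev_{k,x}$ noted just above the statement.

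Next I would expand $\ev_{k,x}$ in an orthonormal basis $\phi_1,\dots,\phi_m$ of $\mathcal{W}_k$, where $m = \dim(\mathcal{W}_k)$. The reproducing property $\langle f,\ev_{k,x}\rangle = f(x)$ applied to each $\phi_i$ gives $\langle \phi_i, \ev_{k,x}\rangle = \phi_i(x)$, so
\[
    \ev_{k,x} = \sum_{i=1}^m \phi_i(x)\,\phi_i,
    \qquad
    \lVert \ev_{k,x}\rVert_2^2 = \sum_{i=1}^m \phi_i(x)^2.
\]

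Finally, I would integrate both expressions for $\lVert \ev_{k,x}\rVert_2^2$ over the sphere. Since $\lVert \ev_{k,x}\rVert_2^2 = Q_k^d(1)$ is constant in $x$ and $\mu$ is a probability measure, the left side integrates to $Q_k^d(1)$; the right side integrates to $\sum_{i=1}^m \lVert \phi_i\rVert_2^2 = m = \dim(\mathcal{W}_k)$. Combining with the already-established formula $\dim(\mathcal{W}_k) = \binom{d+k-1}{d-1} - \binom{d+k-3}{d-1}$ completes the proof. There is no real obstacle here; the only subtle point is making sure the $O(d)$-invariance argument (or equivalently, direct substitution into the definition) is invoked to justify that $\lVert \ev_{k,x}\rVert_2^2$ is genuinely constant on $S^{d-1}$ before averaging.
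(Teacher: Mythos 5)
Your proof is correct and is essentially the paper's argument in a different guise: the paper integrates the rank-one operator $\ev_{k,x}\ev_{k,x}^{\top}$ over the sphere, shows the resulting operator is the identity on $\mathcal W_k$, and equates two computations of its trace, which is exactly your orthonormal-basis identity $\int_{S^{d-1}} \sum_{i=1}^m \phi_i(x)^2\, d\mu(x) = \dim(\mathcal W_k)$ written basis-free. Both hinge on the same two facts --- the reproducing property and the constancy of $\lVert \ev_{k,x}\rVert_2^2$ --- so there is nothing to add.
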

\begin{proof}
    The linear transformation $\ev_{k,x}\ev_{k,x}^{\top}\colon f\mapsto \langle \ev_x, f\rangle\, \ev_x$ has trace
    \[
        \Tr(\ev_x\ev_x^{\top})
        = \Tr(\ev_x^\top \ev_x)
        = \langle \ev_x, \ev_x\rangle
        = Q_k^d(1).
    \]
    The linear transformation
    \[
        E := \int\limits_{\mathclap{S^{d-1}}} \ev_{k,x}\ev_{k,x}^\top\ d\mu(x)
    \]
    thus also has trace $Q_k^d(1)$. We claim that $E$ is in fact the identity operator on $\mathcal W_k$. Given any polynomial $f$, we have
    \[
        Ef = \int\limits_{\mathclap{S^{d-1}}} \ev_{k,x}\ev_{k,x}^{\top} \, f\ d\mu(x)
        = \int\limits_{\mathclap{S^{d-1}}} f(x)\, \ev_{k,x} \ d\mu(x).
    \]
    Therefore
    \[
        \big(Ef\big) (y) 
        = \int\limits_{\mathclap{S^{d-1}}} f(x)\, \ev_{k,x}(y) \ d\mu(x)
        = \int\limits_{\mathclap{S^{d-1}}} f(x)\, \ev_{k,y}(x) \ d\mu(x)
        = f(y),
    \]
    and $Ef = f$; so
    \[
        Q_k^d(1)
        = \Tr(E)
        = \dim(\mathcal W_k)
        = \binom{d+k-1}{d-1} - \binom{d+k-3}{d-1}
        .\qedhere
    \]
\end{proof}

\vspace{2\baselineskip}
\noindent
{\small \textsc{Travis Dillon}}\\
{\small \textsc{Department of Mathematics, Massachusetts Institute of Technology, Cambridge, MA, USA}}\\
\textit{email:} \texttt{dillont@mit.edu}

\end{document}